\newtheorem{lem}{Lemma}[section]
\newtheorem{prop}{Proposition}[section]
\newtheorem{cor}{Corollary}[section]
\newtheorem{thm}{Theorem}[section]
\newtheorem{rem}{Remark}[section]
\newfont{\sBlackboard}{msbm10 scaled 900}
\newcommand{\ud}     {{\rm d}}
\newcommand{\mylabel}[1]{\label{#1}
            \ifx\undefined\stillediting
            \else \fbox{$#1$}\fi }
\newcommand{\EEQ}{\end{equation}}
\newcommand{\rfb}[1]{\mbox{\rm
   (\ref{#1})}\ifx\undefined\stillediting\else:\fbox{$#1$}\fi}
\newfont{\Blackboard}{msbm10 scaled 1200}
\newfont{\roma}{cmr10 scaled 1200}
\def\CC{\rm \hbox{C\kern-.56em\raise.4ex
         \hbox{$\scriptscriptstyle |$}\kern+0.5 em }}
\newcommand{\bt}{\begin{Theorem}}
\newcommand{\et}{\end{Theorem}}
\newcommand{\br}{\begin{remark}}
\newcommand{\er}{\end{remark}}
\newcommand{\bc}{\begin{Corollary}}
\newcommand{\ec}{\end{Corollary}}
\newcommand{\el}{\end{Lemma}}
\newcommand{\bd}{\begin{definition}}
\newcommand{\ed}{\end{definition}}
\newcommand{\N}  {\mathbb{N}}
\newcommand{\R}  {\mathbb{R}}
\newcommand{\mm}    {{\hbox{\hskip 0.5pt}}}
\newcommand{\m}     {{\hbox{\hskip 1pt}}}
\newcommand{\bluff} {{\hbox{\raise 15pt \hbox{\mm}}}}
\newcommand{\rarrow} {{\,\rightarrow\,}}
\def\section{\@startsection {section}{1}{\z@}{-3.5ex plus -1ex minus
    -.2ex}{2.3ex plus .2ex}{\large\bf}}
\def\ds{\displaystyle}
\newcommand{\re}{\mathrm{Re}}
\newcommand{\im}{\mathrm{Im}}
\newcommand{\e}{\mathrm{e}}
\begin{document}

\thispagestyle{empty}
\title[]{Fractional-feedback stabilization for a class of evolution systems}
\author{Ka\"{\i}s AMMARI}
\address{UR Analysis and Control of PDEs, UR 13ES64, Department of Mathematics, Faculty of Sciences of Monastir, University of Monastir, 5019 Monastir, Tunisia and LMV/UVSQ/Paris-Saclay, France} \email{kais.ammari@fsm.rnu.tn}

 \author{Hassine Fathi}
\address{UR Analysis and Control of PDEs, UR 13ES64, Department of Mathematics, Faculty of Sciences of Monastir, University of Monastir, 5019 Monastir, Tunisia} \email{fathi.hassine@fsm.rnu.tn}

\author{Luc ROBBIANO}
\address{Laboratoire de Math\'ematiques, Universit\'e de Versailles Saint-Quentin en Yvelines, 78035 Versailles, France}
\email{luc.robbiano@uvsq.fr}

\date \today

\begin{abstract}
We study the problem of stabilization for a class of evolution systems with fractional-damping. After writing the equations as an augmented system we prove in this article first that the problem is well posed. Second, using the LaSalle's invariance principle we show that the system is strongly stable. Then, based on a resolvent approach we show a luck of uniform stabilization. Next, using multiplier techniques combined with the frequency domain method, we shall give a polynomial stabilization result under some consideration on the stabilization of an auxiliary dissipating system. Finally, we give some applications to the wave equation. 
\end{abstract}

\subjclass[2010]{35A01, 35A02, 35M33, 93D20}
\keywords{stabilization, abstract-wave equation, fractional-damping}

\maketitle

\tableofcontents
\section{Introduction}
\setcounter{equation}{0}

In recent years, fractional calculus has been increasingly applied in different fields of science \cite{magin,tarasov,VMK}. Physical phenomena related to electromagnetism, propagation of energy in dissipative systems, thermal stresses, models of porous electrodes, relaxation vibrations, viscoelasticity and thermoelasticity are successfully described by fractional differential equations \cite{GB,ML,matignon1,matignon2}. Fractional calculus allows for the investigation of the nonlocal response of mechanical systems, this is the main advantage when compared to the classical calculus.

In the literature, a number of definitions of the fractional derivatives have been introduced, namely the Hadamard, Erdelyi-Kober, Riemann-Liouville, Riesz, Weyl, Gr\"unwald-Letnikov, Jumarie and the Caputo representation. A thorough analysis of fractional dynamical systems is necessary to achieve an appropriate definition of the fractional derivative. For example, the Riemann-Liouville definition entails physically unacceptable initial conditions (fractional order initial conditions); conversely, for the Caputo representation, the initial conditions are expressed in terms of integer-order derivatives having direct physical significance; this definition is mainly used to include memory effects. Recently, Michele Caputo and Mauro Fabrizio in \cite{CF} presented a new definition of the fractional derivative without a singular kernel; this derivative possesses very interesting properties, for instance the possibility to describe fluctuations and structures with different scales. Furthermore, this definition allows for the description of mechanical properties related to damage, fatigue and material heterogeneities.

Let $H$ be a Hilbert space equipped with the norm $\|\,.\,\|_H$, and let $A:\mathcal{D}(A)\subset H \rightarrow H$ be a self-adjoint and strictly positive operator on $H$. We introduce the scale of Hilbert spaces $H_{\beta}$, $\beta\in\R$, as follows: for every $\beta \geq 0$, $H_{\beta}={\mathcal D}(A^{\beta})$, with the norm $\|z\|_{\beta}=\|A^{\beta} z\|_{H}$. The space $H_{-\beta}$ is defined by duality with respect to the pivot space $H$ as follows: $H_{-\beta} =H_{\beta}^*$ for $\beta>0$. The operator $A$ can be extended (or restricted) to each $H_\beta$, such that it becomes a bounded operator
$$
A:H_\beta\rarrow H_{\beta-1},\quad \forall\,\beta\in\R \m.
$$

Let a bounded linear operator $B:U\rarrow H_{-\frac{1}{2}}$, where $U$ is another Hilbert space which will be identified with its dual.

The system we consider here is described by:
\begin{equation}\label{IFF1}
\left\{
\begin{array}{ll}
\partial_{t}^{2}u(t)+Au(t)+BB^{*}\partial^{\alpha,\eta}_{t}u(t)=0,\; t > 0, 
\\
u(0)=u^{0},\;\partial_{t}u(0)=u^{1},
\end{array}
\right.
\end{equation}
where $\partial_{t}^{\alpha,\eta}$ denoted the fractional derivative defined by
\begin{equation}\label{IFF9}
\partial_{t}^{\alpha,\eta}v(t)=\frac{1}{\Gamma (1-\alpha)}\,\int_{0}^{t} (t-s)^{-\alpha}\,e^{-\eta(t-s)}\,v^{\prime}(s) \,\ud s,\;0<\alpha<1,\;\eta\geq 0.
\end{equation}
We define also the following  exponentially modified fractional integro-differential operators
\begin{equation*}
I^{\alpha,\eta}v(t)=\frac{1}{\Gamma (\alpha)}\,\int_{0}^{t} (t-s)^{\alpha-1}\,e^{-\eta(t-s)}\,v(s)\,\ud s,\;0<\alpha<1,\;\eta\geq 0.
\end{equation*}
With these notations we have
\begin{equation}\label{IFF10}
\partial_{t}^{\alpha,\eta}v (t)=I^{1-\alpha,\eta}v^{\prime}(t).
\end{equation} 
There are many definitions for fractional derivatives \cite{das}, among which Riemann-Liouville definition and Caputo definitions are most widely used \cite{MJBSR}. The latter has the same Laplace transform as the integer order one, so it is widely used in control theory. In this paper, the fractional derivative damping force is regarded as a control force to study the properties of free damped vibration of the system, so the Caputo definition is used here.

Noting that the case of the wave equation with boundary fractional damping have treated in \cite{mbodje,mbodjemontsey} where it is proven the strong stability and the lack of uniform stabilization. However, the case of the plate equation or the beam equation with boundary fractional damping was treated in \cite{AAB} where in addition of that using the domain frequency method it was shown that the energy is  polynomially stable.

The main result of this paper concerns the precise asymptotic behavior of the solutions of \eqref{IFF2}-\eqref{IFF4}. 
Our technique is based on a resolvent estimate.   

\medskip

This paper is organized as follows. In section \ref{ASFF} we reformulate problem \eqref{IFF1} into an augmented system. In Section \ref{WFF}, we give the proper functional setting for the augmented model \eqref{IFF2}-\eqref{IFF4}, and prove that this system is well-posed. In Section \ref{SSFF}, we establish a resolvent estimate which correspond to the system \eqref{IFF2}-\eqref{IFF4} and by resolvent method we give the explicit decay rate of the energy of the solutions of \eqref{IFF2}-\eqref{IFF4}. At the end we give some applications to the wave equation.
\section{Augmented model}\label{ASFF}
In this section we reformulate \eqref{IFF1} into an augmented system. our main result is the following. 
\begin{prop}\label{ASFF3}
We set the constant
\begin{equation*}
\gamma=\frac{2\sin(\alpha\pi)\,\Gamma(\frac{1}{2}+1)}{\pi^{\frac{1}{2}+1}},
\end{equation*}
and
 we define the function
\begin{equation*}
p(\xi)=|\xi|^{\frac{2 \alpha-1}{2}}.
\end{equation*}
Then the relation between the input $U$ and the output $O$ of the following system
\begin{equation}\label{ASFF1}
\left\{\begin{array}{ll}
\partial_{t}\varphi(t,\xi)+(|\xi|^2+\eta)\varphi(t,\xi)=p(\xi)U(t)&\forall\,\xi\in\R,\;t>0
\\
\varphi(0,\xi)=0&\forall\,\xi\in\R
\\
\ds O(t)=\gamma\int_{\R}p(\xi)\varphi(t,\xi)\,\ud\xi,&\forall\,t\geq 0,
\end{array}\right.
\end{equation}
where $U\in\mathcal{C}^{0}([0,+\infty))$, is given by
\begin{equation}\label{ASFF2}
O(t)=I^{1-\alpha,\eta}U(t).
\end{equation}
\end{prop}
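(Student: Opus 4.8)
The strategy is to solve the ODE in $\xi$ explicitly, plug the solution into the output formula, interchange the order of integration, and recognize the resulting kernel as the one defining $I^{1-\alpha,\eta}$. First I would solve the linear scalar ODE
\begin{equation*}
\partial_t\varphi(t,\xi)+(|\xi|^2+\eta)\varphi(t,\xi)=p(\xi)U(t),\qquad \varphi(0,\xi)=0,
\end{equation*}
by Duhamel's formula, obtaining $\varphi(t,\xi)=p(\xi)\int_0^t e^{-(|\xi|^2+\eta)(t-s)}U(s)\,\ud s$. Substituting into the definition of $O$ gives
\begin{equation*}
O(t)=\gamma\int_\R p(\xi)^2\Big(\int_0^t e^{-(|\xi|^2+\eta)(t-s)}U(s)\,\ud s\Big)\ud\xi .
\end{equation*}

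Next I would justify Fubini's theorem to exchange the $\xi$- and $s$-integrals (the integrand is nonnegative up to the sign of $U$, and since $U$ is continuous on $[0,t]$ it is bounded there, while $p(\xi)^2=|\xi|^{2\alpha-1}$ together with the Gaussian factor $e^{-|\xi|^2(t-s)}$ is integrable in $\xi$ for each fixed $s<t$ because $2\alpha-1>-1$; a uniform-in-$s$ bound near $s=t$ needs a short estimate, which is the one technical point). After exchanging, one is left with
\begin{equation*}
O(t)=\gamma\int_0^t U(s)\,e^{-\eta(t-s)}\Big(\int_\R |\xi|^{2\alpha-1}e^{-|\xi|^2(t-s)}\,\ud\xi\Big)\ud s .
\end{equation*}

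The core computation is then the inner integral. Setting $\tau=t-s>0$ and substituting $r=|\xi|$, $\int_\R |\xi|^{2\alpha-1}e^{-|\xi|^2\tau}\,\ud\xi=2\int_0^\infty r^{2\alpha-1}e^{-r^2\tau}\,\ud r$, and the change of variable $w=r^2\tau$ turns this into $\tau^{-\alpha}\int_0^\infty w^{\alpha-1}e^{-w}\,\ud w=\Gamma(\alpha)\,\tau^{-\alpha}$. Hence the inner integral equals $\Gamma(\alpha)(t-s)^{-\alpha}$, and
\begin{equation*}
O(t)=\gamma\,\Gamma(\alpha)\int_0^t (t-s)^{-\alpha}e^{-\eta(t-s)}U(s)\,\ud s .
\end{equation*}

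Finally I would check the constant: comparing with $I^{1-\alpha,\eta}U(t)=\frac{1}{\Gamma(1-\alpha)}\int_0^t(t-s)^{-\alpha}e^{-\eta(t-s)}U(s)\,\ud s$, the claim $O(t)=I^{1-\alpha,\eta}U(t)$ reduces to the identity $\gamma\,\Gamma(\alpha)=\frac{1}{\Gamma(1-\alpha)}$, i.e. $\gamma=\frac{1}{\Gamma(\alpha)\Gamma(1-\alpha)}=\frac{\sin(\alpha\pi)}{\pi}$ by the reflection formula. (Here one simplifies the displayed definition of $\gamma$: with $\Gamma(\tfrac32)=\tfrac{\sqrt\pi}{2}$ and $\pi^{3/2}$ in the denominator, $\gamma=\frac{2\sin(\alpha\pi)\sqrt\pi/2}{\pi^{3/2}}=\frac{\sin(\alpha\pi)}{\pi}$, as needed.) The main obstacle is not conceptual but the Fubini justification near the endpoint $s=t$, where the inner $\xi$-integral blows up like $(t-s)^{-\alpha}$; since $\alpha<1$ this singularity is integrable in $s$, so dominated convergence still applies, but one must write this bound carefully rather than invoke Tonelli blindly.
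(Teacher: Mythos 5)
Your proposal is correct and follows essentially the same route as the paper: solve the ODE by Duhamel's formula, substitute into the output, exchange the order of integration, reduce the inner integral to $\Gamma(\alpha)(t-s)^{-\alpha}$ by the substitution $w=r^2(t-s)$, and close with the reflection formula $\Gamma(\alpha)\Gamma(1-\alpha)=\pi/\sin(\alpha\pi)$ after simplifying $\gamma=\sin(\alpha\pi)/\pi$. The only difference is that you spell out the Fubini justification and the change of variable, which the paper leaves implicit.
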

\begin{proof}
Solving equation \eqref{ASFF1}, we obtain
$$
\varphi(t,\xi)=p(\xi)\int_{0}^{t}\e^{-(|\xi|^{2}+\eta)(t-s)}U(s)\,\ud s.
$$
If follows from the third line of \eqref{ASFF1} that
\begin{equation*}
\begin{split}
O(t)&=\gamma\int_{0}^{t}U(s)\int_{\R}p(\xi)^{2}\e^{-(|\xi|^{2}+\eta)(t-s)}\,\ud\xi\,\ud s
\\
&=\frac{2\sin(\alpha\pi)}{\pi}\int_{0}^{t}\int_{0}^{+\infty}\rho^{2\alpha-1}\e^{-(\rho^{2}+\eta)(t-s)}\,\ud\rho\,U(s)\,\ud s.
\end{split}
\end{equation*}
Now using the fact that $\ds\frac{1}{\Gamma(\alpha)\Gamma(1-\alpha)}=\frac{\sin(\alpha\pi)}{\pi}$ then a simple change of variable leads to the relation \eqref{ASFF2}. This completes the proof.
\end{proof}
Using now Proposition \ref{ASFF3} and relation \eqref{IFF10}, system \eqref{IFF1} may be recast into the following augmented system
\begin{equation}\label{IFF2}
\partial_{t}^{2}u(t)+Au(t)+\gamma\,B\int_{\R}p(\xi)\,\varphi(t,\xi)\,\ud\xi=0,\;t>0, 
\end{equation}
\begin{equation}\label{IFF3}
\partial_{t}\varphi(t,\xi)+(|\xi|^2+\eta)\,\varphi(t,\xi)=p(\xi)\,B^{*}\partial_{t}u(t),\;\xi\in\R,\;t>0,
\end{equation}
\begin{equation}\label{IFF4}
u(0)=u^{0},\;\partial_{t}u(0)=u^{1},\;\varphi(0,\xi)=0,
\end{equation}
where the function $p(\xi)$ and the constant $\gamma$ are given in Proposition \ref{ASFF3}.
\section{Well-posedness}\label{WFF}
In this section, we are interested in showing that system \eqref{IFF1} is well posed in the sense of semigroups. 

Let $V=L^{2}(\R;U)$, we set the Hilbert space $\mathcal{H}=H_{\frac{1}{2}}\times H\times V$ with inner product
$$
\left\langle\left(
\begin{array}{c}
u_{1} 
\\
v_{1} 
\\
\varphi_{1}
\end{array}
\right),\left(
\begin{array}{c}
u_{2} 
\\
v_{2} 
\\
\varphi_{2}
\end{array}
\right) \right\rangle_{\mathcal{H}}=\left\langle A^{\frac{1}{2}}u_{1},A^{\frac{1}{2}}u_{2}\right\rangle_{H}+\left\langle v_{1},v_{2}\right\rangle_{H}+\gamma\int_{\R}\left\langle\varphi_{1}(\xi),\varphi_{2}(\xi)\right\rangle_{U}\,\ud\xi.
$$
If we put $X=\left(
\begin{array}{c}
u 
\\
\partial_{t}u 
\\
\varphi
\end{array}
\right)$ it is clear that \eqref{IFF2}-\eqref{IFF4} can be written as
\begin{equation}\label{IFF8}
X^{\prime}(t)={\mathcal A}X (t),\;\;X(0)=X_{0},
\end{equation}
where $X_{0}=\left(
\begin{array}{c}
u_{0}
\\
u_{1} 
\\
0
\end{array}
\right)$ and $\mathcal{A}:\mathcal{D}(\mathcal{A})\subset\mathcal{H}\rightarrow\mathcal{H}$ is defined by
\begin{equation}\label{IFF6}
\mathcal{A} \left(
\begin{array}{c}
u
\\
v
\\
\varphi
\end{array}
\right)=\left(
\begin{array}{c}
v
\\
\ds -Au-\gamma\,B\int_{\R}p(\xi)\,\varphi(\xi)\,\ud\xi
\\
-(|\xi|^2+\eta)\varphi+p(\xi)B^{*}v
\end{array}
\right),
\end{equation}
with domain
\begin{equation}\label{IFF7}
\begin{split}
\mathcal{D}(\mathcal{A})=\Big\{(u,v,\varphi) \in \mathcal{H}:\; v \in H_{\frac{1}{2}},\;Au +\gamma\,B\int_{\R}p(\xi)\,\varphi(\xi)\,\ud\xi\in H,
\\
|\xi|\varphi\in L^{2}(\R;U),\;-(|\xi|^{2}+\eta)\varphi+p(\xi)B^{*}v\in L^{2}(\R;U)\Big\}.
\end{split}
\end{equation}
Our main result is giving by the following theorem.
\begin{thm}\label{WFF1}
The operator $\mathcal{A}$ defined by \eqref{IFF6} and \eqref{IFF7}, generates a $C_{0}$ semigroup of contractions $e^{t\mathcal{A}}$ in the Hilbert space $\mathcal{H}$.
\end{thm}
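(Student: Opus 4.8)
The plan is to apply the Lumer–Phillips theorem: I will show that $\mathcal{A}$ is dissipative and that $I-\mathcal{A}$ (equivalently $\lambda I - \mathcal{A}$ for some $\lambda>0$) is surjective onto $\mathcal{H}$; together with the density of $\mathcal{D}(\mathcal{A})$ in $\mathcal{H}$ this gives that $\mathcal{A}$ generates a $C_0$-semigroup of contractions.

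\textbf{Step 1: Dissipativity.} For $X=(u,v,\varphi)\in\mathcal{D}(\mathcal{A})$ I would compute $\re\langle\mathcal{A}X,X\rangle_{\mathcal{H}}$ using the inner product on $\mathcal{H}$. The first component contributes $\re\langle A^{1/2}v,A^{1/2}u\rangle_H$; the second contributes $\re\langle -Au-\gamma B\int_\R p(\xi)\varphi(\xi)\,\ud\xi,\,v\rangle_H = -\re\langle A^{1/2}u,A^{1/2}v\rangle_H - \gamma\,\re\langle\int_\R p(\xi)\varphi(\xi)\,\ud\xi,\,B^*v\rangle_U$ (using that $B^*\in\mathcal{L}(H_{1/2},U)$ so the duality pairing makes sense); the third contributes $\gamma\int_\R\re\langle -(|\xi|^2+\eta)\varphi(\xi)+p(\xi)B^*v,\,\varphi(\xi)\rangle_U\,\ud\xi$. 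The terms $\pm\re\langle A^{1/2}u,A^{1/2}v\rangle_H$ cancel, and the two cross-terms involving $p(\xi)\langle\varphi(\xi),B^*v\rangle_U$ cancel as well (one from the second component, one from the third), leaving
\[
\re\langle\mathcal{A}X,X\rangle_{\mathcal{H}} = -\gamma\int_\R (|\xi|^2+\eta)\,\|\varphi(\xi)\|_U^2\,\ud\xi \le 0,
\]
since $\gamma>0$ and $\eta\ge 0$. Hence $\mathcal{A}$ is dissipative.

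\textbf{Step 2: Maximality (surjectivity of $I-\mathcal{A}$).} Given $F=(f_1,f_2,f_3)\in\mathcal{H}$, I must find $X=(u,v,\varphi)\in\mathcal{D}(\mathcal{A})$ with $(I-\mathcal{A})X=F$, i.e. $u-v=f_1$, $v+Au+\gamma B\int_\R p(\xi)\varphi(\xi)\,\ud\xi=f_2$, and $\varphi+(|\xi|^2+\eta)\varphi-p(\xi)B^*v=f_3$. From the first equation $v=u-f_1$; from the third, $\varphi(\xi)=\dfrac{f_3(\xi)+p(\xi)B^*v}{1+|\xi|^2+\eta}$, which expresses $\varphi$ in terms of $v$ (hence $u$). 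Substituting into the second equation yields a variational problem for $u$ of the form
\[
u + Au + \gamma\, B\!\int_\R \frac{p(\xi)^2}{1+|\xi|^2+\eta}\,\ud\xi\; B^* u = f_2 + f_1 + \text{(known terms)},
\]
where the key point is that the integral $\int_\R \dfrac{p(\xi)^2}{1+|\xi|^2+\eta}\,\ud\xi = \int_\R \dfrac{|\xi|^{2\alpha-1}}{1+|\xi|^2+\eta}\,\ud\xi$ converges: near $\xi=0$ the integrand behaves like $|\xi|^{2\alpha-1}$ which is integrable since $2\alpha-1>-1$ (as $\alpha>0$), and near infinity it behaves like $|\xi|^{2\alpha-3}$ which is integrable since $2\alpha-3<-1$ (as $\alpha<1$). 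Thus $\kappa:=\gamma\int_\R \frac{p(\xi)^2}{1+|\xi|^2+\eta}\,\ud\xi$ is a finite positive number, and I would set up the sesquilinear form
\[
a(u,w) = \langle u,w\rangle_H + \langle A^{1/2}u,A^{1/2}w\rangle_H + \kappa\,\langle B^*u,B^*w\rangle_U
\]
on $H_{1/2}\times H_{1/2}$. This form is bounded and coercive on $H_{1/2}$ (the first two terms already control $\|u\|_{1/2}^2$ up to equivalence of norms, and the third is nonnegative), so by Lax–Milgram there is a unique $u\in H_{1/2}$ solving the equation; one then recovers $v\in H_{1/2}$ and $\varphi$, and checks that the regularity constraints in \eqref{IFF7} hold (in particular $|\xi|\varphi\in L^2(\R;U)$ follows from the explicit formula for $\varphi$ together with the membership $f_3\in V$ and boundedness of $B^*v$, and $Au+\gamma B\int p\varphi\in H$ follows from the equation since all other terms are in $H$).

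\textbf{Step 3: Density.} Since $\mathcal{A}$ is dissipative and $I-\mathcal{A}$ is surjective, $\mathcal{A}$ is closed and, being m-dissipative on the Hilbert space $\mathcal{H}$, it is automatically densely defined (m-dissipative operators on reflexive spaces are densely defined); alternatively one checks directly that $\mathcal{D}(\mathcal{A})$ contains a dense subspace. The conclusion then follows from the Lumer–Phillips theorem. The main obstacle is Step 2: one must verify the convergence of the defining integral (which hinges precisely on the constraint $0<\alpha<1$) and then carefully check that the solution produced by Lax–Milgram actually lies in $\mathcal{D}(\mathcal{A})$ with all four conditions of \eqref{IFF7} satisfied, rather than merely in $\mathcal{H}$.
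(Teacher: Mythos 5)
Your proposal is correct, and while it follows the paper in using Lumer--Phillips and the same dissipativity computation, your treatment of the range condition is genuinely different from the paper's. The paper proves surjectivity of $\lambda I-\mathcal{A}$ by an iterative (Neumann-series-type) construction: it defines sequences $(u_n),(v_n),(\varphi_n)$ by induction, derives geometric bounds in terms of constants $K_1,K_2$ built from the integrals $\int_{\R}p(\xi)^2(1+|\xi|)^{-2}\,\ud\xi$ and $\int_{\R}p(\xi)^2(1+|\xi|)^2(|\xi|^2+\eta+\lambda)^{-2}\,\ud\xi$, and sums the series for $\lambda>0$ large enough (which suffices for Lumer--Phillips). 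You instead eliminate $\varphi$ and $v$ algebraically, reduce to a single coercive variational problem for $u$ on $H_{\frac{1}{2}}$ with the form $a(u,w)=\langle u,w\rangle_H+\langle A^{\frac12}u,A^{\frac12}w\rangle_H+\kappa\langle B^*u,B^*w\rangle_U$, and apply Lax--Milgram; this works already for $\lambda=1$ and avoids the bookkeeping of the induction. The finiteness of $\kappa=\gamma\int_{\R}p(\xi)^2(1+|\xi|^2+\eta)^{-1}\,\ud\xi$, which you correctly isolate as the point where $0<\alpha<1$ enters, plays the same role as the convergence of the integrals defining $K_1,K_2$ in the paper. Your route is in fact the same technique the paper itself uses later for the auxiliary operator $\mathcal{A}_0$ in Section \ref{NUSFF}, so it is entirely consistent with the authors' toolbox; the only detail worth writing out fully is the verification that the Lax--Milgram solution satisfies all four membership conditions of \eqref{IFF7}, which you flag and which does go through (e.g.\ $|\xi|\varphi\in V$ reduces to $\int_{\R}|\xi|^{2\alpha+1}(1+|\xi|^2+\eta)^{-2}\,\ud\xi<\infty$, and $Au+\gamma B\int_{\R}p(\xi)\varphi(\xi)\,\ud\xi=f_2-v\in H$ follows from the equation itself).
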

\begin{proof}
To prove this result we shall use the Lumer-Phillips' theorem (see \cite[Theorem 4.3]{Pazy}). Since for every $X=(u,v,\varphi)\in\mathcal{D}(\mathcal{A})$ we have
$$
\re\left\langle\mathcal{A}X,X\right\rangle_{\mathcal{H}}=-\gamma\int_{\R}(|\xi|^{2}+\eta)\|\varphi(\xi)\|_{U}^{2}\,\ud\xi\leq 0.
$$
then the operator $\mathcal{A}$ is dissipative. 

Let $\lambda>0$, we prove that the operator $(\lambda I-\mathcal{A})$ is a surjection. In other words, we shall demonstrate that given any triplet $Z=(f,g,h)\in \mathcal{H}$, there is an other triplet $X=(u,v,\varphi)\in\mathcal{D}(\mathcal{A})$ such that $(\lambda I-\mathcal{A})X=Z$, which can be recast as follow
\begin{equation*}
\left\{\begin{array}{l}
v=\lambda u-f,
\\
\ds(\lambda^{2}I+A)u=\lambda f+g-\gamma B\int_{\R}p(\xi)\varphi(\xi)\,\ud \xi,
\\
\ds\varphi(\xi)=\frac{p(\xi)}{|\xi|^{2}+\eta+\lambda}B^{*}v+\frac{h(\xi)}{|\xi|^{2}+\eta+\lambda}.
\end{array}\right.
\end{equation*}

Since $A$ is a non-negative operator then according \cite[Proposition 3.3.5]{tucsnakweinss}, $-A$ is m-dissipative. Thus the operator $(\lambda^{2}+A)$ is a bijection and we have
$$
\|(\lambda^{2}I+A)^{-1}\|_{\mathcal{L}(H)}\leq \frac{1}{\lambda^{2}}.
$$
Let $(u_{n})$, $(v_{n})$ and $(\varphi_{n})$ be three sequences defined by induction as follow
$$
\left\{\begin{array}{l}
u_{0}=(\lambda^{2}I+A)^{-1}(\lambda f+g)\in H_{1}\subset H_{\frac{1}{2}},
\\
v_{0}=-f\in H^{\frac{1}{2}}\subset H,
\\
\ds\varphi_{0}(\xi)=\frac{h(\xi)}{|\xi|^{2}+\eta+\lambda}\in V,
\end{array}\right.
$$
and
$$
\left\{\begin{array}{l}
\ds u_{n+1}=-\gamma(\lambda^{2}I+A)^{-1}B\int_{\R}p(\xi)\varphi_{n}(\xi)\,\ud\xi,
\\
v_{n+1}=\lambda u_{n},
\\
\ds\varphi_{n+1}(\xi)=\frac{p(\xi)}{|\xi|^{2}+\eta+\lambda}B^{*}v_{n}.
\end{array}\right.
$$
We denote the constants $C_{1}$, $C_{2}$ and $C_{3}$ by
$$
\begin{array}{lll}
C_{1}=\|f\|_{H_{-\frac{1}{2}}}+\|g\|_{H_{-\frac{1}{2}}},&C_{2}=\|f\|_{H_{-\frac{1}{2}}},&\ds C_{3}=\left(\int_{\R}\frac{(1+|\xi|)^{2}}{(|\xi|^{2}+\eta+\lambda)^{2}}\,\ud\xi\right)^{\frac{1}{2}}\|h\|_{V},
\end{array}
$$
and we set the constants $K_{1}$ and $K_{2}$ by
$$
\begin{array}{ll}
\ds K_{1}=\|B\|_{\mathcal{L}(U,H_{-\frac{1}{2}})}\left(\int_{\R}\frac{p(\xi)^{2}}{(1+|\xi|)^{2}}\ud\xi\right)^{\frac{1}{2}},&\ds K_{2}=\gamma\|B^{*}\|_{\mathcal{L}(H_{\frac{1}{2}},U)}\left(\int_{\R}\left(\frac{p(\xi)(1+|\xi|)}{|\xi|^{2}+\eta+\lambda}\right)^{2}\ud\xi\right)^{\frac{1}{2}},
\end{array}
$$
which it is clear that they are well defined.

We set the sequences $a_{n}=\|u_{n}\|_{H_{-\frac{1}{2}}}$, $b_{n}=\|v_{n}\|_{H_{-\frac{1}{2}}}$ and $c_{n}=\|\,(1+|\xi|).\varphi_{n}\|_{L^{2}(\R,U)}$. It is clear using the H\"older inequality that 
$$
\begin{array}{lll}
\ds a_{0}\leq\lambda^{-1}C_{1},&b_{0}\leq C_{2},& c_{0}\leq C_{3},
\\
\\
\ds a_{1}\leq\lambda^{-2}C_{3}K_{1},&b_{1}\leq C_{1},& c_{1}\leq C_{2}K_{2},
\\
\\
\ds a_{2}\leq\lambda^{-2}C_{2}K_{1}K_{2},&\ds b_{2}\leq\lambda^{-1}C_{3}K_{1},& c_{2}\leq C_{1}K_{2}.
\end{array}
$$
Using the same arguments we can prove by induction that for all $n\in\N$ we have $u_{n},\,v_{n}\in H_{\frac{1}{2}}$, $\varphi_{n},|\xi|\varphi_{n}\in V$ and
$$
\begin{array}{lll}
\ds a_{3n}\leq\lambda^{-(n+1)}C_{1}K_{1}^{n}K_{2}^{n},&b_{3n}\leq\lambda^{-n}C_{2}K_{1}^{n}K_{2}^{n},& c_{3n}\leq\lambda^{-n}C_{3}K_{1}^{n}K_{2}^{n},
\\
\\
\ds a_{3n+1}\leq\lambda^{-(n+2)}C_{3}K_{1}^{n+1}K_{2}^{n},&b_{3n+1}\leq \lambda^{-n}C_{1}K_{1}^{n}K_{2}^{n},& c_{3n+1}\leq \lambda^{-n}C_{2}K_{1}^{n}K_{2}^{n+1},
\\
\\
\ds a_{3n+2}\leq\lambda^{-(n+2)}C_{2}K_{1}^{n+1}K_{2}^{n+1},&\ds b_{3n+2}\leq\lambda^{-(n+1)}C_{3}K_{1}^{n+1}K_{2}^{n},& c_{3n+2}\leq\lambda^{-n}C_{1}K_{1}^{n}K_{2}^{n+1}.
\end{array}
$$
So that, for $\lambda>0$  large enough the two sums $\ds\sum u_{n}$ and $\ds\sum v_{n}$ converge uniformly in $H_{-\frac{1}{2}}$ and the sum $\ds\sum\varphi_{n}$ converges uniformly in $V$. Therefore, by setting $\ds u=\sum_{n=0}^{+\infty} u_{n}$, $\ds v=\sum_{n=0}^{+\infty}v_{n}$ and $\ds\varphi=\sum_{n=0}^{+\infty}\varphi_{n}$ we find
\begin{equation*}
\begin{split}
u&=u_{0}+\sum_{n=1}^{+\infty}u_{n}=(\lambda^{2}I+A)^{-1}(\lambda f+g)-\gamma\sum_{n=1}^{+\infty}(\lambda^{2}I+A)^{-1}B\int_{\R}p(\xi)\varphi_{n-1}(\xi)\,\ud\xi
\\
&=(\lambda^{2}I+A)^{-1}\left((\lambda f+g)-\gamma B\int_{\R}p(\xi)\sum_{n=0}^{+\infty}\varphi_{n}(\xi)\,\ud\xi\right)
\\
&=(\lambda^{2}I+A)^{-1}(\lambda f+g)-\gamma(\lambda^{2}I+A)^{-1}B\int_{\R}p(\xi)\varphi(\xi)\,\ud\xi.
\end{split}
\end{equation*}
Since $\varphi\in V$ we follow that $u\in H_{\frac{1}{2}}$ and we have $\ds(\lambda^{2}I+A)u=(\lambda f+g)-\gamma B\int_{\R}p(\xi)\varphi(\xi)\,\ud\xi$. By the same way we have
$$
v=\sum_{n=0}^{+\infty}v_{n}=v_{0}+\sum_{n=1}^{+\infty}v_{n}=f+\lambda\sum_{n=1}^{+\infty}u_{n-1}=\lambda u+f
$$
and also
\begin{equation*}
\begin{split}
\varphi(\xi)&=\sum_{n=0}^{+\infty}\varphi_{n}(\xi)=\varphi_{0}(\xi)+\sum_{n=1}^{+\infty}\varphi_{n}(\xi)=\frac{h(\xi)}{|\xi|^{2}+\eta+\lambda}+\frac{p(\xi)}{|\xi|^{2}+\eta+\lambda}B^{*}\sum_{n=1}^{+\infty}v_{n-1}
\\
&=\frac{h(\xi)}{|\xi|^{2}+\eta+\lambda}+\frac{p(\xi)}{|\xi|^{2}+\eta+\lambda}B^{*}v.
\end{split}
\end{equation*}
This prove $v\in H_{\frac{1}{2}}$ and $|\xi|\varphi,\,\varphi\in L^{2}(\R;U)$. Finally, it is clear that $\ds Au+\gamma B\int_{\R}p(\xi)\varphi(\xi)\,\ud\xi\in H$, $-(|\xi|^{2}+\eta)\varphi+p(\xi)B^{*}v\in L^{2}(\R;U)$. Hence, we proved that the operator $(\mathcal{A}-\lambda I)$ is onto. This completes the proof.
\end{proof}
As a consequence of Theorem \ref{WFF1}, the system \eqref{IFF2}-\eqref{IFF4} is well-posed in the energy space $\mathcal{H}$ and we have the following proposition.
\begin{prop}
For $(u^{0},u^{1},0)\in \mathcal{H}$, the problem \eqref{IFF2}-\eqref{IFF4} admits a unique solution 
$$
(u,\partial_{t}u,\varphi)\in\mathcal{C}([0,+\infty);\mathcal{H}).
$$
and for $(u^{0},u^{1},0)\in\mathcal{D}(\mathcal{A})$, the problem \eqref{IFF2}-\eqref{IFF4} admits a unique solution 
$$
(u,\partial_{t}u,\varphi)\in\mathcal{C}([0,+\infty);\mathcal{D}(\mathcal{A}))\cap\mathcal{C}^{1}([0,+\infty);\mathcal{H}).
$$
Moreover, from the density of $\mathcal{D}(\mathcal{A})$ in $\mathcal{H}$ the energy of $(u(t),\varphi(t))$ at time $t\geq 0$ given by
\[
E(t)=\frac{1}{2}\,\left(\|u(t)\|_{H_{\frac{1}{2}}}^{2}+\|\partial_{t}u(t)\|_{H}^{2}+\gamma\int_{\R}\left\|\varphi(t,\xi)\right\|_{U}^{2}\,\ud\xi \right).
\]
decays as follow
\begin{equation}\label{IFF5}
\frac{\ud E}{\ud t}(t)=-\gamma\int_{\R}\left(|\xi|^{2}+\eta\right)\left\|\varphi(t,\xi)\right\|^2_U\,\ud\xi,\;\forall\, t\geq 0.
\end{equation}
\end{prop}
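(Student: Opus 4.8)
The plan is to deduce all three assertions directly from Theorem \ref{WFF1} together with the basic theory of $C_0$ semigroups. By Theorem \ref{WFF1}, $\mathcal{A}$ generates a $C_0$ semigroup of contractions $(e^{t\mathcal{A}})_{t\ge 0}$ on $\mathcal{H}$. Writing \eqref{IFF2}--\eqref{IFF4} in the abstract form \eqref{IFF8} with $X=(u,\partial_{t}u,\varphi)$ and $X_{0}=(u^{0},u^{1},0)$, for every $X_{0}\in\mathcal{H}$ the function $X(t)=e^{t\mathcal{A}}X_{0}$ is the unique mild solution and belongs to $\mathcal{C}([0,+\infty);\mathcal{H})$, which is the first assertion. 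If moreover $X_{0}\in\mathcal{D}(\mathcal{A})$, the standard regularity theorem for semigroup generators (see \cite[Theorem 4.3]{Pazy}) gives that $X(t)=e^{t\mathcal{A}}X_{0}$ is a classical solution, i.e.\ $X\in\mathcal{C}([0,+\infty);\mathcal{D}(\mathcal{A}))\cap\mathcal{C}^{1}([0,+\infty);\mathcal{H})$ with $X'(t)=\mathcal{A}X(t)$ for all $t\ge 0$; uniqueness in both cases follows from linearity and the contraction bound $\|e^{t\mathcal{A}}X_{0}\|_{\mathcal{H}}\le\|X_{0}\|_{\mathcal{H}}$.

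To obtain \eqref{IFF5}, first take $X_{0}\in\mathcal{D}(\mathcal{A})$. Then $t\mapsto X(t)$ is $\mathcal{C}^{1}$ into $\mathcal{H}$, so $E(t)=\frac{1}{2}\|X(t)\|_{\mathcal{H}}^{2}$ is $\mathcal{C}^{1}$ on $[0,+\infty)$ and, differentiating the inner product and using $X'(t)=\mathcal{A}X(t)$,
\[
\frac{\ud E}{\ud t}(t)=\re\left\langle X'(t),X(t)\right\rangle_{\mathcal{H}}=\re\left\langle\mathcal{A}X(t),X(t)\right\rangle_{\mathcal{H}}.
\]
The right-hand side was already evaluated in the proof of Theorem \ref{WFF1}: for every $(u,v,\varphi)\in\mathcal{D}(\mathcal{A})$ one has $\re\left\langle\mathcal{A}(u,v,\varphi),(u,v,\varphi)\right\rangle_{\mathcal{H}}=-\gamma\int_{\R}(|\xi|^{2}+\eta)\,\|\varphi(\xi)\|_{U}^{2}\,\ud\xi$, which gives \eqref{IFF5} for classical solutions.

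It remains to pass to arbitrary data $X_{0}\in\mathcal{H}$. Integrating the previous identity, for $X_{0}\in\mathcal{D}(\mathcal{A})$ and $0\le s\le t$ we get
\[
E(t)-E(s)=-\gamma\int_{s}^{t}\int_{\R}(|\xi|^{2}+\eta)\,\|\varphi(\tau,\xi)\|_{U}^{2}\,\ud\xi\,\ud\tau .
\]
Both sides depend continuously on $X_{0}$ for the topology of $\mathcal{H}$: the left-hand side through $X_{0}\mapsto\|e^{t\mathcal{A}}X_{0}\|_{\mathcal{H}}^{2}$, and the right-hand side because the dissipation term equals $E(s)-E(t)$, hence is bounded by $\frac{1}{2}\|X_{0}\|_{\mathcal{H}}^{2}$ and depends continuously on the $\mathcal{H}$-valued trajectory. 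By density of $\mathcal{D}(\mathcal{A})$ in $\mathcal{H}$ this integrated energy balance holds for every $X_{0}\in\mathcal{H}$, and whenever the corresponding solution is smooth enough one recovers the pointwise form \eqref{IFF5} by differentiation.

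The only delicate point is this last density step: for merely mild solutions $t\mapsto E(t)$ need not be differentiable, so that \eqref{IFF5} is to be understood in the integrated sense above; checking the continuity in $X_{0}$ of the dissipation functional is straightforward once one observes that it coincides with $E(s)-E(t)$. Everything else is a routine application of linear semigroup theory and of the dissipativity computation already carried out in the proof of Theorem \ref{WFF1}.
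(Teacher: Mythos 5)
Your proposal is correct and follows essentially the same route as the paper: existence, uniqueness and regularity come from Theorem \ref{WFF1} and standard semigroup theory, and the energy identity comes from differentiating $E(t)=\frac{1}{2}\|X(t)\|_{\mathcal{H}}^{2}$ along classical solutions (the paper splits $E=E_{1}+E_{2}$ and recomputes the two derivatives, which amounts to the same dissipativity computation you reuse from the proof of Theorem \ref{WFF1}). Your explicit treatment of the density step, with the observation that for data merely in $\mathcal{H}$ the identity \eqref{IFF5} only holds in integrated form, is a welcome refinement that the paper leaves implicit.
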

\begin{proof}
Noting that the regularity of the solution of the problem \eqref{IFF2}-\eqref{IFF4} is consequence of the semigroup properties. We have just to prove \eqref{IFF5}. We set 
$$
E_{1}(t)=\frac{1}{2}\left(\|u(t)\|_{H_{\frac{1}{2}}}^{2}+\|\partial_{t}u(t)\|_{H}^{2}\right)\quad\text{and}\quad E_{2}(t)=\frac{\gamma}{2}\left(\int_{\R}\,\|\varphi(t,\xi)\|_{U}^{2}\,\ud \xi\right).
$$
A straightforward calculation gives
$$
\frac{\ud E_{1}}{\ud t}(t)=-\gamma\re\left\langle\int_{\R}p(\xi)\varphi(t,\xi)\,\ud\xi,B^{*}\partial_{t}u(t)\right\rangle_{U},
$$
and
$$
\frac{\ud E_{2}}{\ud t}(t)=\gamma\re\left\langle\int_{\R}p(\xi)\varphi(t,\xi)\,\ud\xi,B^{*}\partial_{t}u(t)\right\rangle_{U}-\gamma\int_{\R}(|\xi|^{2}+\eta).\|\varphi(t,\xi)\|_{U}^{2}\,\ud\xi.
$$
Since $E(t)=E_{1}(t)+E_{2}(t)$ then estimate \eqref{IFF5} holds and this complete the proof.
\end{proof}
\section{Strong stabilization}\label{SSFF}
In this section, we prove that the solutions of system \eqref{IFF8} converge asymptotically to zero. To achieve this result,we shall make use the LaSalle's invariance principle extended to infinite-dimensional systems \cite{walter}. According  to this principle, all solutions of \eqref{IFF8} will asymptotically tend to the maximal invariant subset of the set
$$
\mathcal{I}=\left\{X_{0}\in\mathcal{H}:\; \frac{\ud E}{\ud t}(t)=0\right\}.
$$
Provided that these solutions are pre-compact in $\mathcal{H}$.
\begin{lem}
Let
$$
\mathcal{E}(t)=\frac{1}{2}\left(\|\partial_{t}u\|_{H_{\frac{1}{2}}}^{2}+\|\partial_{t}^{2}u\|_{H}^{2}+\gamma\int_{\R}\,\|\partial_{t}\varphi(\xi)\|_{U}^{2}\,\ud \xi\right).
$$
Then the function $t\longmapsto\mathcal{E}(t)$ is non-increasing along solutions of the system \eqref{IFF8} with initial data are in $\mathcal{D}(\mathcal{A}^{2})$. In particular, we have
\begin{equation}\label{SSFF1}
\frac{\ud \mathcal{E}}{\ud t}(t)=-\gamma\int_{\R}(|\xi|^{2}+\eta).\|\partial_{t}\varphi(t,\xi)\|_{U}^{2}\,\ud\xi.
\end{equation}
\end{lem}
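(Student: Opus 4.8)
The plan is to apply the energy identity \eqref{IFF5} not to the solution $(u,\partial_t u,\varphi)$ itself, but to its time-derivative. Concretely, if the initial data $X_0 = (u^0,u^1,0)$ lies in $\mathcal{D}(\mathcal{A}^2)$, then by the semigroup regularity theory $X(t) = e^{t\mathcal{A}}X_0$ satisfies $X \in \mathcal{C}^1([0,\infty);\mathcal{D}(\mathcal{A})) \cap \mathcal{C}^2([0,\infty);\mathcal{H})$, and $Y(t) := X'(t) = \mathcal{A}e^{t\mathcal{A}}X_0 = e^{t\mathcal{A}}(\mathcal{A}X_0)$ is itself a solution of \eqref{IFF8} with initial datum $\mathcal{A}X_0 \in \mathcal{D}(\mathcal{A})$. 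Writing out the components of $Y$, we have $Y = (\partial_t u, \partial_t^2 u, \partial_t\varphi)$, and $\mathcal{E}(t)$ is exactly the energy $E$ of this differentiated solution $Y$, i.e. $\mathcal{E}(t) = \tfrac12\|\partial_t u(t)\|_{H_{1/2}}^2 + \tfrac12\|\partial_t^2 u(t)\|_H^2 + \tfrac{\gamma}{2}\int_\R \|\partial_t\varphi(t,\xi)\|_U^2\,\ud\xi$.

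The first step is therefore to justify that $Y$ genuinely solves the same augmented system \eqref{IFF2}--\eqref{IFF4} (with different, less regular initial data, in particular the third component of $Y(0)$ is $\partial_t\varphi(0,\cdot)$ which need not vanish, but this plays no role in the energy computation). This follows by differentiating equations \eqref{IFF2} and \eqref{IFF3} in $t$, which is legitimate under the stated regularity; alternatively one invokes directly that $t\mapsto e^{t\mathcal{A}}(\mathcal{A}X_0)$ is the unique classical solution associated with the datum $\mathcal{A}X_0 \in \mathcal{D}(\mathcal{A})$. The second step is then simply to quote the already-established energy dissipation law \eqref{IFF5} applied to $Y$ in place of $X$: the right-hand side $-\gamma\int_\R(|\xi|^2+\eta)\|\cdot\|_U^2\,\ud\xi$ evaluated at the third component $\partial_t\varphi$ of $Y$ gives precisely \eqref{SSFF1}, and monotonicity of $\mathcal{E}$ is immediate since the integrand on the right is non-negative (as $\eta \geq 0$).

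I do not expect any serious obstacle here; the lemma is essentially a bootstrapping observation. The only point requiring a modicum of care is the regularity needed to differentiate the PDE in time and to manipulate the inner products — this is exactly why the hypothesis asks for data in $\mathcal{D}(\mathcal{A}^2)$ rather than $\mathcal{D}(\mathcal{A})$: it guarantees $\partial_t^2 u \in \mathcal{C}([0,\infty);H)$ and $\partial_t u \in \mathcal{C}([0,\infty);H_{1/2})$, so that each term in $\mathcal{E}(t)$ is well-defined and differentiable. Once that is in place, the computation of $\ud\mathcal{E}/\ud t$ is verbatim the one carried out in the proof of \eqref{IFF5} (splitting $\mathcal{E} = \mathcal{E}_1 + \mathcal{E}_2$ into the "$u$-part" and the "$\varphi$-part", differentiating each, and observing that the cross terms involving $\langle \int_\R p(\xi)\partial_t\varphi(t,\xi)\,\ud\xi,\, B^*\partial_t^2 u(t)\rangle_U$ cancel), so there is nothing new to prove beyond substitution.
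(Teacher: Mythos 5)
Your proposal is correct and follows essentially the same route as the paper: the paper likewise observes that $\dot{X}(t)=\mathcal{A}e^{t\mathcal{A}}X_{0}=e^{t\mathcal{A}}\mathcal{A}X_{0}$ is a solution with initial datum $\mathcal{A}X_{0}\in\mathcal{D}(\mathcal{A})$, splits $\mathcal{E}=\mathcal{E}_{1}+\mathcal{E}_{2}$, and repeats the cancellation of the cross terms exactly as in the proof of \eqref{IFF5}. Your remark that the nonvanishing of the third component of $\mathcal{A}X_{0}$ is harmless is the right point of care and is consistent with the paper's argument.
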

\begin{proof}
If $X_{0}\in\mathcal{D}(\mathcal{A}^{2})$ then the $X(t)=e^{t\mathcal{A}}X_{0}$ is a solution of \eqref{IFF8} with the following regularity
$$
X(t)=\left(\begin{array}{c}
u(t)
\\
\partial_{t}u(t)
\\
\varphi(t)
\end{array}\right)\in\mathcal{C}([0,+\infty[,\mathcal{D}(\mathcal{A}^{2}))\cap\mathcal{C}^{1}([0,+\infty[,\mathcal{D}(\mathcal{A})).
$$
with $\dot{X}(t)=\left(\begin{array}{c}
\partial_{t}u(t)
\\
\partial_{t}^{2}u(t)
\\
\partial_{t}\varphi(t,\xi)
\end{array}\right)=\mathcal{A}X(t)=\mathcal{A}e^{t\mathcal{A}}X_{0}=e^{t\mathcal{A}}\mathcal{A}X_{0}$. And since $\mathcal{A}X_{0}\in\mathcal{D}(\mathcal{A})$ then 
$$
\dot{X}(t)\in\mathcal{C}([0,+\infty[,\mathcal{D}(\mathcal{A}))\cap\mathcal{C}^{1}([0,+\infty[,\mathcal{H}),
$$
then by setting
$$
\mathcal{E}_{1}(t)=\frac{1}{2}\left(\|\partial_{t}u\|_{H_{\frac{1}{2}}}^{2}+\|\partial_{t}^{2}u\|_{H}^{2}\right)\quad\text{and}\quad\mathcal{E}_{2}(t)=\frac{\gamma}{2}\left(\int_{\R}\,\|\partial_{t}\varphi(\xi)\|_{U}^{2}\,\ud \xi\right)
$$
we have
$$
\frac{\ud\mathcal{E}_{1}}{\ud t}(t)=-\gamma\re\left\langle\int_{\R}p(\xi)\partial_{t}\varphi(t,\xi)\,\ud\xi,B^{*}\partial_{t}^{2}u\right\rangle_{U},
$$
and
$$
\frac{\ud\mathcal{E}_{2}}{\ud t}(t)=\gamma\re\left\langle\int_{\R}p(\xi)\partial_{t}\varphi(t,\xi)\,\ud\xi,B^{*}\partial_{t}^{2}u\right\rangle_{U}-\gamma\int_{\R}(|\xi|^{2}+\eta).\|\partial_{t}\varphi(t,\xi)\|_{U}^{2}\,\ud\xi.
$$
So that, by summing the two last expressions we obtain \eqref{SSFF1} and consequently the non-increasing property of $\mathcal{E}(t)$ holds. This complete the proof.
\end{proof}
\begin{lem}\label{SSFF17}
We assume that the only classical solution $u(t)$ (i.e. such that  for all $t\geq 0$ we have $(u(t),\,\partial_{t}u(t))\in H_{\frac{1}{2}}\times H_{\frac{1}{2}}$ and $Au(t)\in H$) of the following system
\begin{equation}\label{SSFF21}
\left\{\begin{array}{l}
\partial_{t}^{2}u(t)+Au(t)=0
\\
B^{*}\partial_{t}u(t)=0.
\end{array}\right.
\end{equation}
is the trivial one, then \eqref{IFF8} admits a unique solution too given by the zero solution.
\end{lem}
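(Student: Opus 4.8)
The plan is to use the dissipation identity \eqref{IFF5} to kill the memory variable $\varphi$ along any trajectory with conserved energy, and then to substitute $\varphi\equiv 0$ back into the augmented system \eqref{IFF2}--\eqref{IFF4}, which then collapses to exactly the overdetermined wave system \eqref{SSFF21}; the hypothesis forces $u\equiv 0$, hence the whole state vanishes, so the maximal invariant subset of $\mathcal{I}$ reduces to $\{0\}$.

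Let $X(t)=e^{t\mathcal{A}}X_0=(u(t),\partial_t u(t),\varphi(t))$ be a solution of \eqref{IFF8} whose trajectory stays in $\mathcal{I}$, equivalently along which $E(t)\equiv E(0)$ (recall that $E$ is non-increasing). By \eqref{IFF5},
\[
\gamma\int_{\R}\bigl(|\xi|^2+\eta\bigr)\,\|\varphi(t,\xi)\|_U^2\,\ud\xi=0,\qquad\forall\,t\ge 0,
\]
and since $|\xi|^2+\eta>0$ for a.e.\ $\xi\in\R$ (the only possible zero, $\xi=0$ in the case $\eta=0$, being a null set), we obtain $\varphi(t,\cdot)=0$ in $V$ for every $t\ge0$. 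I would carry out the remaining pointwise manipulations for data $X_0\in\mathcal{D}(\mathcal{A})$, so that all three lines of \eqref{IFF2}--\eqref{IFF4} hold in the strong sense; this is no restriction, because the maximal invariant subset of $\mathcal{I}$ is a closed, $e^{t\mathcal{A}}$-invariant subspace of $\mathcal{H}$, hence meets $\mathcal{D}(\mathcal{A})$ in a dense subset, and it suffices to show it contains only $0$.

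With $\varphi\equiv 0$, the third equation \eqref{IFF3} --- equivalently, the last membership condition in the domain \eqref{IFF7}, using that $p=|\xi|^{\frac{2\alpha-1}{2}}$ is not square-integrable on $\R$ --- forces $p(\xi)\,B^*\partial_t u(t)=0$ in $V$; since $p(\xi)\ne 0$ for $\xi\ne 0$ and $B^*\partial_t u(t)\in U$ does not depend on $\xi$, this yields $B^*\partial_t u(t)=0$ for all $t\ge 0$. Substituting $\varphi\equiv 0$ into \eqref{IFF2} gives $\partial_t^2 u(t)+Au(t)=0$, while $X(t)\in\mathcal{D}(\mathcal{A})$ together with $\varphi\equiv 0$ shows $(u(t),\partial_t u(t))\in H_{\frac{1}{2}}\times H_{\frac{1}{2}}$ and $Au(t)\in H$. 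Hence $u$ is a classical solution of \eqref{SSFF21}, so $u\equiv 0$ by assumption; consequently $\partial_t u\equiv 0$ and $\varphi\equiv 0$, i.e.\ $X\equiv 0$, and in particular $X_0=0$, which is the claim. The only step requiring some care is the reduction to $\mathcal{D}(\mathcal{A})$-data just indicated; the genuine analytic difficulty, unique continuation for \eqref{SSFF21}, is exactly what the lemma assumes, so within this proof it poses no obstacle.
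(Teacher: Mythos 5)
Your proof is correct and takes essentially the same route as the paper's: the dissipation identity \rfb{IFF5} forces $\varphi\equiv 0$ along any trajectory in $\mathcal{I}$, the augmented system then collapses to \rfb{SSFF21}, and the hypothesis gives $u\equiv 0$. You merely fill in details the paper leaves implicit --- in particular the step where $\varphi\equiv 0$ is converted into $B^{*}\partial_{t}u=0$ via $p(\xi)\neq 0$ for $\xi\neq 0$, and the reduction to $\mathcal{D}(\mathcal{A})$-data --- which the paper simply declares ``clear''.
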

\begin{proof}
Let $X=(u,v,\varphi)\in\mathcal{I}$ be a classical solution of \eqref{IFF8}. Then from \eqref{IFF5} we have
$$
\int_{\R}\left(|\xi|^{2}+\eta\right)\left\|\varphi(s,\xi)\right\|^{2}_{U}\,\ud\xi=0.
$$
which imply that
\begin{equation}\label{SSFF2}
\varphi(t,\xi)\equiv 0 \;\text{ in }\;L^{2}(\R;U).
\end{equation}
By using \eqref{SSFF2}, it is clear that system \eqref{IFF8} reduces to the system \eqref{SSFF21}. Then by the assumption made in this lemma we deduce that $u(t)\equiv 0$ for all $t\geq0$. This complete the proof.
\end{proof}
\begin{prop}\label{SSFF18}
Let $X_{0}=(u_{0},v_{0},\varphi_{0})\in\mathcal{D}(\mathcal{A}^{2})$, then the trajectory of $\varphi(t)$, the third component of the solution of \eqref{IFF8}, is pre-compact in $L^{2}(\R;U)$.
\end{prop}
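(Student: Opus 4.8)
The plan is to apply the Kolmogorov--Riesz--Fr\'echet compactness criterion in $L^{2}(\R;U)$ to the family $\{\varphi(t):t\geq0\}$: one checks uniform boundedness, uniform tightness (for each $\varepsilon>0$ a compact $K\subset\R$ with $\sup_{t}\int_{\R\setminus K}\|\varphi(t,\xi)\|_{U}^{2}\,\ud\xi<\varepsilon$), uniform $L^{2}$-equicontinuity in $\xi$, and --- when $\dim U=\infty$ --- a local relative-compactness of the $U$-valued samples. First I would record the a priori bounds coming from the regularity of the data. Since $\e^{t\mathcal{A}}$ is a contraction semigroup it restricts to a non-expansive family on $\mathcal{D}(\mathcal{A})$ and on $\mathcal{D}(\mathcal{A}^{2})$ for the graph norms (because $\mathcal{A}\e^{t\mathcal{A}}=\e^{t\mathcal{A}}\mathcal{A}$ on $\mathcal{D}(\mathcal{A})$), so for $X_{0}\in\mathcal{D}(\mathcal{A}^{2})$ the orbit $X(t)=(u(t),\partial_{t}u(t),\varphi(t))$ stays bounded in $\mathcal{D}(\mathcal{A}^{2})$. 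Reading off the relevant components of $X(t)$, $\mathcal{A}X(t)$ and $\mathcal{A}^{2}X(t)$ one obtains $C>0$ with
\[
\|\varphi(t)\|_{L^{2}(\R;U)}+\|\partial_{t}\varphi(t)\|_{L^{2}(\R;U)}+\|\partial_{t}u(t)\|_{H_{\frac{1}{2}}}+\|\partial_{t}^{2}u(t)\|_{H_{\frac{1}{2}}}\leq C,\qquad t\geq0 ;
\]
in particular $w(t):=B^{*}\partial_{t}u(t)$ and $\dot w(t):=B^{*}\partial_{t}^{2}u(t)$ are bounded in $U$, say by $M$, and $w\in\mathcal{C}^{1}([0,+\infty);U)$.

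For the tightness at infinity I would use the pointwise identity furnished by \eqref{IFF3},
\[
\varphi(t,\xi)=\frac{p(\xi)\,w(t)-\partial_{t}\varphi(t,\xi)}{|\xi|^{2}+\eta},
\]
which, together with $p(\xi)^{2}(|\xi|^{2}+\eta)^{-2}\leq|\xi|^{2\alpha-5}$ and $(|\xi|^{2}+\eta)^{-2}\leq|\xi|^{-4}$, gives
\[
\int_{|\xi|>R}\|\varphi(t,\xi)\|_{U}^{2}\,\ud\xi\leq 2M^{2}\int_{|\xi|>R}|\xi|^{2\alpha-5}\,\ud\xi+\frac{2C^{2}}{R^{4}} ,
\]
a bound that is independent of $t$ and tends to $0$ as $R\to\infty$ because $2\alpha-5<-1$.

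The equicontinuity in $\xi$ --- and, when $\eta=0$, the tightness near $\xi=0$ --- I would draw from the Duhamel representation
\[
\varphi(t,\xi)=\e^{-(|\xi|^{2}+\eta)t}\varphi_{0}(\xi)+p(\xi)\int_{0}^{t}\e^{-(|\xi|^{2}+\eta)(t-s)}w(s)\,\ud s .
\]
If $\eta>0$, the first term has $L^{2}$-norm on $\{|\xi|<\delta\}$ at most $\|\varphi_{0}\|_{L^{2}(\{|\xi|<\delta\};U)}\to0$ (uniformly in $t$, since $\e^{-(|\xi|^{2}+\eta)t}\leq1$), and the second is $\leq\eta^{-1}p(\xi)M$ in $U$, of $L^{2}$-norm on $\{|\xi|<\delta\}$ at most $\eta^{-1}M(\int_{|\xi|<\delta}p(\xi)^{2}\,\ud\xi)^{1/2}\to0$; hence $K=\{\delta\leq|\xi|\leq R\}$ is admissible. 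On such an annulus, for $|h|\leq\delta/2$, I would split $\varphi(t,\xi+h)-\varphi(t,\xi)$ into the contributions of the three factors $\e^{-(|\xi|^{2}+\eta)t}$, $\varphi_{0}$, $p(\xi)$, and estimate each using the $L^{2}$-continuity of translations applied to the fixed functions $\varphi_{0}$ and $p\,\mathbf{1}_{\{|\xi|\leq 2R\}}$, the uniform continuity of $p$ away from $0$, the elementary kernel bounds $|\e^{-at}-\e^{-bt}|\leq\min\{1,t|a-b|\}$ and $|a^{-1}\e^{-at}-b^{-1}\e^{-bt}|\leq c_{\delta}|a-b|$ for $a,b$ bounded below by a constant depending on $\delta$, and $\sup_{t\geq0}|\e^{-|\xi+h|^{2}t}-\e^{-|\xi|^{2}t}|\to0$ for each fixed $\xi\neq0$ (dominated convergence in $\xi$). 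This yields $\sup_{t\geq0}\int_{\R}\|\varphi(t,\xi+h)-\varphi(t,\xi)\|_{U}^{2}\,\ud\xi\to0$ as $h\to0$; compactness for $t$ in a bounded interval follows from continuity of $t\mapsto\varphi(t)$ into $L^{2}(\R;U)$.

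Combining these facts with, if $\dim U=\infty$, the remark that on $\{\delta\leq|\xi|\leq R\}$ the samples $\varphi(t,\xi)$ lie in a fixed relatively compact subset of $U$ as soon as $\{B^{*}\partial_{t}u(t):t\geq0\}$ is relatively compact in $U$ (which holds if $U$ is finite-dimensional, or if $A$ has compact resolvent, as in the applications), the Kolmogorov--Riesz--Fr\'echet criterion gives the relative compactness of $\{\varphi(t):t\geq0\}$ in $L^{2}(\R;U)$. I expect the main obstacle to be precisely the equicontinuity step: the map $\xi\mapsto\varphi(t,\xi)$ carries no a priori regularity in $\xi$ (the symbol $p(\xi)=|\xi|^{\frac{2\alpha-1}{2}}$ is singular at the origin and $\varphi_{0}$ is merely $L^{2}$), so the uniform-in-$t$ $L^{2}$-equicontinuity must be extracted by hand from the Duhamel formula, with the regime $\xi\to0$ being the genuinely delicate one --- transparent for $\eta>0$, but requiring extra care when $\eta=0$, where the parabolic-type smoothing in the $\xi$-variable degenerates.
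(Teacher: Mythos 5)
Your route is genuinely different from the paper's: you set out to verify the Kolmogorov--Riesz--Fr\'echet criterion for the whole orbit $\{\varphi(t)\}_{t\geq 0}$, whereas the paper observes that, the trajectory being continuous from $[0,+\infty)$ into $L^{2}(\R;U)$, it suffices to prove $\|\varphi(t)\|_{L^{2}(\R;U)}\to 0$ as $t\to+\infty$ (a continuous curve converging to a point has compact closure), and then deduces this norm decay from the two dissipation identities \eqref{SSFF3}--\eqref{SSFF4}. That reduction is what lets the paper work with an arbitrary Hilbert space $U$ and no compactness hypothesis at all. Your argument, as you yourself note, needs $\{B^{*}\partial_{t}u(t):t\geq 0\}$ to be relatively compact in $U$ when $\dim U=\infty$; but the statement contains no such hypothesis, and boundedness of $\partial_{t}u$ and $\partial_{t}^{2}u$ in $H_{\frac{1}{2}}$ only gives a bounded Lipschitz curve in $U$, not a precompact range. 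So in the generality of the proposition your proof does not close on this point.

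The second and more serious gap is exactly the one you flag: the regime $\xi\to 0$ when $\eta=0$. There your Duhamel bound gives
$\bigl\|p(\xi)\int_{0}^{t}\e^{-|\xi|^{2}(t-s)}w(s)\,\ud s\bigr\|_{U}\leq M\,|\xi|^{\alpha-\frac{1}{2}}\,\frac{1-\e^{-|\xi|^{2}t}}{|\xi|^{2}}$,
whose right-hand side is of order $|\xi|^{\alpha-\frac{5}{2}}$ once $t\gtrsim |\xi|^{-2}$, and $|\xi|^{2\alpha-5}$ is not integrable near the origin. Hence the uniform-in-$t$ smallness of $\int_{|\xi|<\delta}\|\varphi(t,\xi)\|_{U}^{2}\,\ud\xi$ --- which you need both for tightness near $0$ and for the translation estimate across the singularity of $p$ --- does not follow from anything you wrote. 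The missing idea, which the paper uses in Case 2 of its proof of Proposition \ref{SSFF18} (and again in Proposition \ref{SSFF19}), is an integration by parts in $s$ in the Duhamel integral: writing
$\int_{0}^{t}\e^{-|\xi|^{2}(t-s)}\partial_{t}u(s)\,\ud s=u(t)-\e^{-|\xi|^{2}t}u(0)-|\xi|^{2}\int_{0}^{t}\e^{-|\xi|^{2}(t-s)}u(s)\,\ud s$
trades $\partial_{t}u$ for $u$ and produces the harmless factor $|\xi|^{2}\int_{0}^{t}\e^{-|\xi|^{2}(t-s)}\,\ud s\leq 1$, yielding $\|\varphi(t,\xi)\|_{U}\leq\|\varphi_{0}(\xi)\|_{U}+C\,p(\xi)$ uniformly in $t$, with $p$ square-integrable near $0$ since $2\alpha-1>-1$. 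With that estimate your scheme for $\eta=0$ can be completed (your tightness at infinity and your $\eta>0$ estimates are correct as written); without it, the proof has a hole precisely where you predicted it would.
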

\begin{proof}
Since, for $X_{0}\in\mathcal{D}(\mathcal{A}^{2})$, $\varphi(t)$ is continuous mapping from $[0,+\infty[$ into $L^{2}(\R,U)$, it is therefore sufficient to show that 
$$
\int_{\R}\|\varphi(t,\xi)\|_{U}^{2}\,\ud \xi\,\longrightarrow\,0\quad\text{ as }\quad t\,\nearrow\,+\infty.
$$
From \eqref{IFF5} and \eqref{SSFF1} together with the fact that both $E(t)$ and $\mathcal{E}(t)$ are non-increasing functions we follow
\begin{equation}\label{SSFF3}
\int_{0}^{+\infty}\int_{\R}(|\xi|^{2}+\eta)\|\varphi(t,\xi)\|_{U}^{2}\,\ud\xi\,\ud t<+\infty,
\end{equation}
and
\begin{equation}\label{SSFF4}
\int_{0}^{+\infty}\int_{\R}(|\xi|^{2}+\eta)\|\partial_{t}\varphi(t,\xi)\|_{U}^{2}\,\ud\xi\,\ud t<+\infty.
\end{equation}
The remainder of the proof will be divided in two cases.

\underline{\textbf{Case 1:} $\eta\neq 0$.} Here we get immediately from \eqref{SSFF3} and \eqref{SSFF4} the following relations
\begin{equation}\label{SSFF5}
\int_{0}^{+\infty}\int_{\R}\|\varphi(t,\xi)\|_{U}^{2}\,\ud\xi\,\ud t<+\infty,
\end{equation}
and
\begin{equation}\label{SSFF6}
\int_{0}^{+\infty}\int_{\R}\|\partial_{t}\varphi(t,\xi)\|_{U}^{2}\,\ud\xi\,\ud t<+\infty.
\end{equation}
By using theses relations together with the well know inequality $2\,\re\langle X,Y\rangle\leq \|X\|^{2}+\|Y\|^{2}$ for all $X,\,Y$, we obtain
\begin{equation*}
\begin{split}
\left|\int_{\R}\|\varphi(t,\xi)\|_{U}^{2}\,\ud\xi-\int_{\R}\|\varphi(s,\xi)\|_{U}^{2}\,\ud\xi\right|&=2\,\left|\re\left(\int_{s}^{t}\int_{\R}\langle\partial_{t}\varphi(t,\xi),\varphi(t,\xi)\rangle_{U}\,\ud\xi\,\ud t\right)\right|
\\
&\leq\int_{s}^{t}\int_{\R}\|\partial_{t}\varphi(\xi,t)\|_{U}^{2}+\|\varphi(\xi,t)\|_{U}^{2}\,\ud\xi\,\ud t,
\end{split}
\end{equation*}
then we easily see from \eqref{SSFF5} and \eqref{SSFF6} that
\begin{equation}\label{SSFF7}
\lim_{t\to+\infty}\int_{\R}\|\varphi(t,\xi)\|_{U}^{2}\,\ud\xi\quad\text{ exist and finite.}
\end{equation}
But then \eqref{SSFF5} and \eqref{SSFF7} imply that
$$
\lim_{t\to+\infty}\int_{\R}\|\varphi(t,\xi)\|_{U}^{2}\,\ud\xi=0.
$$

\underline{\textbf{Case 2:} $\eta= 0$.} In this case \eqref{SSFF3} and \eqref{SSFF4} reduce to
\begin{equation}\label{SSFF8}
\int_{0}^{+\infty}\int_{\R}|\xi|^{2}\|\varphi(t,\xi)\|_{U}^{2}\,\ud\xi\,\ud t<+\infty,
\end{equation}
and
\begin{equation}\label{SSFF9}
\int_{0}^{+\infty}\int_{\R}|\xi|^{2}\|\partial_{t}\varphi(t,\xi)\|_{U}^{2}\,\ud\xi\,\ud t<+\infty.
\end{equation}
Again, by using the inequality $2\,\re\langle X,Y\rangle\leq \|X\|^{2}+\|Y\|^{2}$, we have
$$
\lim_{t\to+\infty}\int_{\R}|\xi|^{2}\|\varphi(t,\xi)\|_{U}^{2}\,\ud\xi\quad\text{ exist and finite.}
$$
Thus \eqref{SSFF8} imply that
\begin{equation}\label{SSFF10}
\lim_{t\to+\infty}\int_{\R}|\xi|^{2}\|\varphi(t,\xi)\|_{U}^{2}\,\ud\xi=0.
\end{equation}
Therefore, in view of \eqref{SSFF10}, it is clear that $\ds\int_{\R}\|\varphi(t,\xi)\|_{U}^{2}\,\ud\xi$ will tends to zero as $t$ goes to $+\infty$, if 
\begin{equation}\label{SSFF11}
\lim_{t\to+\infty}\int_{B(0,1)}\|\varphi(t,\xi)\|_{U}^{2}\,\ud\xi=0,
\end{equation}
where $B(0,1)$ is the unit ball in $\R$. Next, we prove \eqref{SSFF11} by using the dominated converges theorem whose conditions of applicability, in the case at hand, are established below:

$\ast)$ By applying Fubini's theorem to both inequality \eqref{SSFF8} and \eqref{SSFF9} we have
$$
\int_{0}^{+\infty}|\xi|^{2}\|\varphi(\xi,t)\|_{U}^{2}\,\ud t<+\infty\;\text{a.e }\xi\in B(0,1)
$$
and
$$
\int_{0}^{+\infty}|\xi|^{2}\|\partial_{t}\varphi(\xi,t)\|_{U}^{2}\,\ud t<+\infty\;\text{a.e }\xi\in B(0,1).
$$
So that, by the same argument that led us to \eqref{SSFF10}, we may conclude that
$$
\lim_{t\to+\infty}|\xi|^{2}\|\varphi(t,\xi)\|_{U}^{2}=0\qquad\text{a.e }\xi\in B(0,1).
$$
Hence, we obtain
\begin{equation}\label{SSFF12}
\lim_{t\to+\infty}\|\varphi(t,\xi)\|_{U}^{2}=0\qquad\text{a.e }\xi\in B(0,1).
\end{equation}

$\ast)$ Now solving \eqref{IFF3}, we have
\begin{equation}\label{SSFF13}
\varphi(t,\xi)=\varphi_{0}(\xi)\e^{-|\xi|^{2}t}+p(\xi)B^{*}\int_{0}^{t}\partial_{t}u(s)\e^{-|\xi|^{2}(t-s)}\,\ud s.
\end{equation}
So that, by applying integration by parts, to the integral in the right hand side of \eqref{SSFF13}, we get
$$
\varphi(t,\xi)=\varphi_{0}(\xi)\e^{-|\xi|^{2}t}+p(\xi)B^{*}[u(t)-u(0)\e^{-|\xi|^{2}t}]-|\xi|^{2}p(\xi)B^{*}\int_{0}^{t}\partial_{t}u(s)\e^{-|\xi|^{2}(t-s)}\,\ud s.
$$
Hence, one gets
\begin{equation*}
\begin{split}
\|\varphi(t,\xi)\|_{U}\leq&\|\varphi_{0}(\xi)\|_{U}+p(\xi)\|B^{*}\|_{\mathcal{L}(H_{\frac{1}{2}},U)}\times
\\
&\left[\|u(t)\|_{H_{\frac{1}{2}}}+\|u(0)\|_{H_{\frac{1}{2}}}+|\xi|^{2}\int_{0}^{t}\|\partial_{t}u(s)\|_{H_{\frac{1}{2}}}\e^{-|\xi|^{2}(t-s)}\,\ud s\right].
\end{split}
\end{equation*}
Also by \eqref{IFF5} we can bound $\|u(t)\|_{H_{\frac{1}{2}}}^{2}\leq E(0)$ and we obtain
\begin{equation}\label{SSFF14}
\|\varphi(t,\xi)\|_{U}^{2}\leq \|\varphi_{0}(\xi)\|_{U}^{2}+p(\xi)^{2}\|B^{*}\|_{\mathcal{L}(H_{\frac{1}{2}},U)}^{2}\left[2E(0)+\mathcal{E}(0)|\xi|^{2}(1-\e^{-|\xi|t})\right].
\end{equation}
Since the right hand side of \eqref{SSFF14} is in $L_{\xi}^{1}(B(0,1))$, therefore by combining \eqref{SSFF12} and \eqref{SSFF14} through the dominated convergence theorem, we get \eqref{SSFF11} the desired result.
\end{proof}
\begin{prop}\label{SSFF19}
We assume that the embedding $H_{\frac{1}{2}}\hookrightarrow H$ is a compact embedding. Let $X_{0}=(u_{0},v_{0},\varphi_{0})\in\mathcal{D}(\mathcal{A}^{2})$, then the trajectory of the pair $(u(t),v(t))$ of the solution of the system \eqref{IFF8} is pre-compact in $H_{\frac{1}{2}}\times H$.
\end{prop}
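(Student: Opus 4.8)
The plan is to treat the two components of the pair separately: $v(t)=\partial_tu(t)$ is bounded in $H_{\frac{1}{2}}$, while for $u(t)$ one solves the first equation of \eqref{IFF8} for $u$ and reduces matters to the precompactness of $\varphi(t)$ already obtained in Proposition~\ref{SSFF18}.

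I would first collect the uniform bounds. Since $e^{t\mathcal{A}}$ is a contraction semigroup, the orbit $X(t)=e^{t\mathcal{A}}X_0$ stays in a bounded subset of $\mathcal{D}(\mathcal{A})$; in particular $E(t)\le E(0)$ and $\mathcal{E}(t)\le\mathcal{E}(0)$, so that $\|u(t)\|_{H_{\frac{1}{2}}}$, $\|\partial_tu(t)\|_{H_{\frac{1}{2}}}$, $\|\partial_t^2u(t)\|_{H}$ and $\|(1+|\xi|)\varphi(t)\|_{V}$ are all bounded independently of $t\ge0$ (the last one because the $\mathcal{D}(\mathcal{A})$-norm dominates $\||\xi|\varphi\|_{V}$). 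Moreover, since $H_{\frac{1}{2}}\hookrightarrow H$ is compact, $A$ has compact resolvent, hence $H_{\beta}\hookrightarrow H_{\sigma}$ is compact whenever $\beta>\sigma$. The hypothesis $X_0\in\mathcal{D}(\mathcal{A}^2)$ will be used only through Proposition~\ref{SSFF18}.

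Now $\{v(t):t\ge0\}$ is bounded in $H_{\frac{1}{2}}$, hence precompact in $H$. For the first component, the first line of \eqref{IFF8} gives
\[
u(t)=-A^{-1}\partial_t^2u(t)-\gamma\,A^{-1}B\int_{\R}p(\xi)\varphi(t,\xi)\,\ud\xi .
\]
The term $-A^{-1}\partial_t^2u(t)$ lies in a bounded subset of $H_{1}$, hence in a precompact subset of $H_{\frac{1}{2}}$. Since $A^{-1}B\in\mathcal{L}(U,H_{\frac{1}{2}})$, it remains to show that $\big\{\int_{\R}p(\xi)\varphi(t,\xi)\,\ud\xi:t\ge0\big\}$ is precompact in $U$; granting this, $\{u(t):t\ge0\}$ is contained in the sum of two precompact subsets of $H_{\frac{1}{2}}$, hence precompact in $H_{\frac{1}{2}}$, and together with $\{v(t)\}$ this proves the proposition.

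The heart of the matter is thus the precompactness in $U$ of the family $\int_{\R}p(\xi)\varphi(t,\xi)\,\ud\xi$, which I would obtain by splitting $\R=B(0,R)\cup B(0,R)^{c}$. On the tail, Cauchy--Schwarz gives
\[
\Big\|\int_{|\xi|>R}p(\xi)\varphi(t,\xi)\,\ud\xi\Big\|_{U}\le\Big(\int_{|\xi|>R}\frac{p(\xi)^{2}}{(1+|\xi|)^{2}}\,\ud\xi\Big)^{\frac{1}{2}}\|(1+|\xi|)\varphi(t)\|_{V},
\]
where $\int_{\R}p(\xi)^{2}(1+|\xi|)^{-2}\,\ud\xi<\infty$ (a constant of the type of $K_1$: near $0$ one has $2\alpha-1>-1$, near infinity $2\alpha-3<-1$), so the first factor tends to $0$ as $R\to\infty$ and the second is uniformly bounded in $t$; the tail is therefore uniformly small. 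On $B(0,R)$ one has $p\in L^{2}(B(0,R))$, so $\psi\mapsto\int_{B(0,R)}p(\xi)\psi(\xi)\,\ud\xi$ is bounded from $L^{2}(B(0,R);U)$ to $U$; since $\{\varphi(t):t\ge0\}$ is precompact in $V$ by Proposition~\ref{SSFF18}, its restrictions to $B(0,R)$ form a precompact subset of $L^{2}(B(0,R);U)$, whose image $\big\{\int_{B(0,R)}p(\xi)\varphi(t,\xi)\,\ud\xi:t\ge0\big\}$ is precompact in $U$. A uniformly small tail together with a precompact main part makes the whole family totally bounded, hence precompact, in $U$. This transfer of precompactness through the singular weight $p$ is the only real obstacle, and it rests entirely on the uniform tail bound furnished by the boundedness of $(1+|\xi|)\varphi(t)$ in $V$; the remaining manipulations are routine.
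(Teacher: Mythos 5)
Your proof is correct, but it follows a genuinely different route from the paper's. The paper establishes the stronger statement that the trajectory is \emph{bounded} in $H_{1}\times H_{\frac{1}{2}}$ and then invokes the compact embedding $H_{1}\times H_{\frac{1}{2}}\hookrightarrow H_{\frac{1}{2}}\times H$: to do this it represents $\varphi$ by Duhamel's formula in terms of $\partial_t u$, substitutes into \eqref{IFF2}, and estimates $\|Au\|_{H}$ directly, which forces a separate and more delicate treatment of the case $\eta=0$ (an integration by parts in $s$ to tame $\int_0^1\rho^{2\alpha-1}\rho^{-2}\,\ud\rho$). You instead solve \eqref{IFF2} for $u=-A^{-1}\partial_t^2u-\gamma A^{-1}B\int_{\R}p(\xi)\varphi(t,\xi)\,\ud\xi$, absorb the first term into a bounded (hence precompact) subset of $H_1$, and reduce the second to precompactness in $U$ of $\int_{\R}p(\xi)\varphi(t,\xi)\,\ud\xi$, which you get by a tail-splitting argument from the uniform bound on $\|(1+|\xi|)\varphi(t)\|_{V}$ together with Proposition \ref{SSFF18}. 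Your version treats $\eta=0$ and $\eta>0$ uniformly and is softer, but it genuinely uses Proposition \ref{SSFF18} (which requires $X_0\in\mathcal{D}(\mathcal{A}^2)$), whereas the paper's proof of this proposition is independent of it and yields the extra quantitative information that the orbit is bounded in $H_1\times H_{\frac{1}{2}}$. The one step you assert without proof --- that the graph norm of $\mathcal{A}$ dominates $\||\xi|\varphi\|_{V}$ --- does hold, but deserves a line: from $(|\xi|^2+\eta)\varphi=p(\xi)B^*v-\psi$ with $\psi$ the third component of $\mathcal{A}X$, one gets on $|\xi|\geq 1$ the bound $\||\xi|\varphi\|\leq\|p(\xi)|\xi|^{-1}\|_{L^2(|\xi|\geq1)}\|B^*v\|_U+\|\psi\|_V+\eta\|\varphi\|_V$, where $p(\xi)^2|\xi|^{-2}=|\xi|^{2\alpha-3}$ is integrable at infinity precisely because $\alpha<1$, while on $|\xi|<1$ one simply uses $\|\varphi\|_V$.
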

\begin{proof}
Note that if $X_{0}=(u_{0},v_{0},\varphi_{0})\in\mathcal{D}(\mathcal{A}^{2})$ then $(u(t),v(t))\in H_{1}\times H_{\frac{1}{2}}$. Since that, in view of the assumption made in this proposition it is clear that to prove this result we have just to prove that the quantity $\|u(t)\|_{H_{1}}^{2}+\|v(t)\|_{H_{\frac{1}{2}}}$ is bounded. We solve the differential equation \eqref{IFF3}, we get
\begin{equation}\label{SSFF15}
\begin{split}
\varphi(t,\xi)&=\varphi_{0}(\xi)\e^{-(|\xi|^{2}+\eta)t}+p(\xi)B^{*}\int_{0}^{t}\partial_{t}u(s)\e^{-(|\xi|^{2}+\eta)(t-s)}\,\ud s
\\
&=\varphi_{0}(\xi)\e^{-(|\xi|^{2}+\eta)t}+p(\xi)B^{*}\int_{0}^{t}\partial_{t}u(t-s)\e^{-(|\xi|^{2}+\eta)s}\,\ud s.
\end{split}
\end{equation}
Using the differential equation \eqref{IFF2}, Fubini's theorem and taking account of \eqref{SSFF15} and the fact that $\mathcal{E}(t)$ is bounded by $\mathcal{E}(0)$, we have
\begin{equation}\label{SSFF16}
\begin{split}
\|u(t)\|_{H_{1}}^{2}&+\|v(t)\|_{H_{\frac{1}{2}}}^{2}=\|Au\|_{H}^{2}+\|\partial_{t}u\|_{H^{\frac{1}{2}}}^{2}
\\
&\leq C\Bigg(\mathcal{E}(0)+\|B\|_{\mathcal{L}(U,H_{-\frac{1}{2}})}^{2}\left\|\int_{\R}\!\!\!p(\xi)\varphi_{0}(\xi)\e^{-(|\xi|^{2}+\eta)t}\,\ud\xi\right\|_{U}^{2}
\\
&+\|BB^{*}\|_{\mathcal{L}(H_{\frac{1}{2}},H_{-\frac{1}{2}})}^{2}\left\|\int_{\R}p(\xi)^{2}\int_{0}^{t}\partial_{t}u(t-s)\e^{-(|\xi|^{2}+\eta)s}\,\ud s\,\ud\xi\right\|_{H_{\frac{1}{2}}}^{2}\Bigg)
\\
&\leq C\Bigg(\mathcal{E}(0)+\|B\|_{\mathcal{L}(U,H_{-\frac{1}{2}})}^{2}\int_{0}^{+\infty}\frac{\rho^{2\alpha-1}}{(1+\rho^{2})}\,\ud\rho.\int_{\R}(1+|\xi|^{2})\|\varphi_{0}(\xi)\|_{U}^{2}\,\ud\xi
\\
&+\|BB^{*}\|_{\mathcal{L}(H_{\frac{1}{2}},H_{-\frac{1}{2}})}^{2}\left\|\int_{0}^{t}\int_{0}^{+\infty}\rho^{2\alpha-1}\partial_{t}u(t-s)\e^{-(\rho^{2}+\eta)s}\,\ud\rho\,\ud s\right\|_{H_{\frac{1}{2}}}^{2}\Bigg).
\end{split}
\end{equation}
Now we set 
$$
I=\left\|\int_{0}^{t}\int_{0}^{+\infty}\rho^{2\alpha-1}\partial_{t}u(t-s)\e^{-(\rho^{2}+\eta)s}\,\ud\rho\,\ud s\right\|_{H_{\frac{1}{2}}}^{2},
$$
and to establish our result, it is clear that we have just to prove that $I$ is bounded. To do so we distinguish two cases.

\underline{\textbf{Case 1:} $\eta\neq0$.} Using again Fubini's theorem and the fact that $\mathcal{E}(t)$ is non-increasing function, we obtain
\begin{equation*}
\begin{split}
I&\leq 2\mathcal{E}(0)\left(\int_{0}^{+\infty}\int_{0}^{t}\rho^{2\alpha-1}\e^{-(\rho^{2}+\eta)s}\,\ud s\,\ud\rho\right)^{2}=2E(0)\left(\int_{0}^{+\infty}\frac{\rho^{2\alpha-1}}{\rho^{2}+\eta}\left(1-\e^{-(\rho^{2}+\eta)t}\right)\,\ud\rho\right)^{2}
\\
&\leq 4\mathcal{E}(0)\left(\int_{0}^{+\infty}\frac{\rho^{2\alpha-1}}{\rho^{2}+\eta}\,\ud\rho\right)^{2}<+\infty.
\end{split}
\end{equation*}
which prove that $I$ bounded.

\underline{\textbf{Case 2:} $\eta=0$.} It is clear that according to the first case that the problem of the boundedness of $I$ is reduces to the boundedness of the following integral
$$
I_{0}=\left\|\int_{1}^{t}\int_{0}^{1}\rho^{2\alpha-1}\partial_{t}u(t-s)\e^{-\rho^{2}s}\,\ud\rho\,\ud s\right\|_{H_{\frac{1}{2}}}^{2},
$$
where we can suppose that $t\geq 1$. Integrating by parts with respect to the $s$ variable and using again the fact that $E(t)$ is non-increasing function, we have
\begin{equation*}
\begin{split}
I_{0}&\leq 2\left(\left\|\int_{0}^{1}\rho^{2\alpha-1}\left(\e^{-t\rho^{2}}u(0)-\e^{-\rho^{2}}u(t-1)\right)\,\ud\rho\right\|_{H_{\frac{1}{2}}}^{2}+\left\|\int_{0}^{1}\rho^{2\alpha+1}\int_{1}^{t}\e^{-s\rho^{2}}u(t-s)\,\ud s\,\ud\rho\right\|_{H_{\frac{1}{2}}}^{2}\right)
\\
&\leq C\left(E(0)\left(\int_{0}^{1}\rho^{2\alpha-1}\ud\rho\right)^{2}+E(0)\left(\int_{0}^{1}\rho^{2\alpha-1}(\e^{-\rho^{2}}-\e^{-t\rho^{2}})\,\ud\rho\right)^{2}\right)\leq C\,\!E(0).
\end{split}
\end{equation*}
This prove the expected estimate and end the proof.
\end{proof}
\begin{thm}\label{SSFF20}
Assuming that the embedding $H_{\frac{1}{2}}\hookrightarrow H$ is compact and that system \eqref{SSFF21} admits a unique solution $u$, such that for all $t\geq 0$ we have $(u(t),\,\partial_{t} u(t))\in H_{\frac{1}{2}}\times H_{\frac{1}{2}}$ and $Au(t)\in H$, is the trivial solution, then the semigroup $e^{t\mathcal{A}}$ is strongly stable, it means that for any initial data $X_{0}\in\mathcal{H}$,
$$
\|\e^{t\mathcal{A}}X_{0}\|_{\mathcal{H}}\,\longrightarrow\,0\quad\text{ as }\quad t\longrightarrow +\infty.
$$
\end{thm}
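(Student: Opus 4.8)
The plan is to deduce strong stability from LaSalle's invariance principle in its extension to infinite-dimensional dissipative systems \cite{walter}, the energy $E(t)=\frac{1}{2}\|e^{t\mathcal{A}}X_0\|_{\mathcal{H}}^2$ playing the role of the Lyapunov functional since it is non-increasing by \eqref{IFF5}. Because $e^{t\mathcal{A}}$ is a contraction semigroup and $\mathcal{D}(\mathcal{A}^3)$ (more generally $\bigcap_{n\ge 1}\mathcal{D}(\mathcal{A}^n)$) is dense in $\mathcal{H}$, a routine $\varepsilon/2$ argument reduces the theorem to proving $\|e^{t\mathcal{A}}X_0\|_{\mathcal{H}}\to0$ for $X_0$ in that dense set: if $\widetilde X_0$ approximates $X_0$ within $\varepsilon$ then $\|e^{t\mathcal{A}}X_0\|_{\mathcal{H}}\le\varepsilon+\|e^{t\mathcal{A}}\widetilde X_0\|_{\mathcal{H}}$, and the second term tends to $0$.

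So fix $X_0\in\mathcal{D}(\mathcal{A}^3)$. First I would invoke Proposition \ref{SSFF18} and Proposition \ref{SSFF19} (the latter using the compactness of $H_{\frac{1}{2}}\hookrightarrow H$) to get that the orbit $\{e^{t\mathcal{A}}X_0:\ t\ge0\}$ is precompact in $\mathcal{H}=H_{\frac{1}{2}}\times H\times L^2(\R;U)$; applying the same two propositions to $\mathcal{A}X_0\in\mathcal{D}(\mathcal{A}^2)$ shows that the orbit of $\mathcal{A}X_0$ is precompact as well. Standard dynamical-systems facts then give that the $\omega$-limit set $\omega(X_0)$ is nonempty, compact, invariant under $e^{t\mathcal{A}}$, that $e^{t\mathcal{A}}X_0\to\omega(X_0)$ as $t\to+\infty$, and — using precompactness of the orbit of $\mathcal{A}X_0$ together with the closedness of $\mathcal{A}$ — that $\omega(X_0)\subset\mathcal{D}(\mathcal{A})$. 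Moreover, since $E$ is non-increasing and bounded below it converges to some $\ell\ge0$, so every $Y\in\omega(X_0)$ has $\|Y\|_{\mathcal{H}}^2=2\ell$ and, by invariance, the whole trajectory issued from $Y$ has constant energy.

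The core step is to show $\omega(X_0)=\{0\}$. Take $Y\in\omega(X_0)\subset\mathcal{D}(\mathcal{A})$ and write $e^{t\mathcal{A}}Y=(u(t),\partial_t u(t),\varphi(t))$. Constancy of the energy together with \eqref{IFF5} forces $\int_{\R}(|\xi|^2+\eta)\|\varphi(t,\xi)\|_U^2\,\ud\xi=0$ for all $t\ge0$, hence $\varphi\equiv0$ in $L^2(\R;U)$. Substituting $\varphi\equiv0$ into \eqref{IFF3} gives $p(\xi)B^{*}\partial_t u(t)=0$ for a.e.\ $\xi$, hence $B^{*}\partial_t u(t)=0$, while \eqref{IFF2} collapses to $\partial_t^2 u+Au=0$; because $Y\in\mathcal{D}(\mathcal{A})$ and $\varphi\equiv0$, this solution satisfies $(u(t),\partial_t u(t))\in H_1\times H_{\frac{1}{2}}\subset H_{\frac{1}{2}}\times H_{\frac{1}{2}}$ and $Au(t)\in H$, so it lies in the uniqueness class of Lemma \ref{SSFF17}. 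The standing hypothesis then yields $u\equiv0$, whence $Y=0$. Therefore $\omega(X_0)=\{0\}$, $e^{t\mathcal{A}}X_0\to0$, and the density reduction of the first paragraph finishes the proof.

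The main obstacle I anticipate is the bookkeeping around the $\omega$-limit set: one must ensure that the limit points inherit enough regularity to be ``classical'' in the sense of Lemma \ref{SSFF17} — this is exactly why I would start from data in $\mathcal{D}(\mathcal{A}^3)$ (so that the orbit of $\mathcal{A}X_0$ is again precompact and $\omega(X_0)\subset\mathcal{D}(\mathcal{A})$) rather than merely $\mathcal{D}(\mathcal{A}^2)$ — and that the dissipation identity \eqref{IFF5} is legitimately applied along these limit trajectories. Once precompactness (Propositions \ref{SSFF18}--\ref{SSFF19}), the contraction property (Theorem \ref{WFF1}), and the density of the smooth data are granted, the remaining arguments are the standard invariance-principle mechanics.
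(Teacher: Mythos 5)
Your proposal is correct and follows essentially the same route as the paper: reduce by density and contractivity to smooth initial data, invoke Propositions \ref{SSFF18} and \ref{SSFF19} for precompactness of the orbit, apply LaSalle's invariance principle, and use the rigidity argument of Lemma \ref{SSFF17} (via the uniqueness hypothesis on system \eqref{SSFF21}) to identify the $\omega$-limit set with $\{0\}$. The only difference is that you work from $\mathcal{D}(\mathcal{A}^{3})$ rather than $\mathcal{D}(\mathcal{A}^{2})$ so that the orbit of $\mathcal{A}X_{0}$ is also precompact and the $\omega$-limit points land in $\mathcal{D}(\mathcal{A})$ — a legitimate (and arguably more careful) handling of a regularity point that the paper's one-line proof leaves implicit.
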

\begin{proof}
For $X_{0}\in\mathcal{D}(\mathcal{A}^{2})$, the theorem is a direct consequence of Lemma \ref{SSFF17}, Propositions \ref{SSFF18} and \ref{SSFF18} and the LaSalle's invariance principle. Finally, since $\mathcal{D}(\mathcal{A}^{2})$ is dense in  $\mathcal{H}$ this result carries over all $X_{0}\in\mathcal{H}$. 
\end{proof}
\section{Lack of uniform stabilization}\label{LUSFF}
In this section we shall prove that system is not uniformly exponentially stable.
\begin{lem}\label{LUSFF8}
Let $\omega\in\R^{*}$ then for any fixed $\eta>0$ and $0<\alpha<1$ we have
\begin{equation}\label{LUSFF1}
\int_{0}^{+\infty}\frac{\rho^{2\alpha-1}}{\rho^{2}+\eta+i\omega}\,\ud\rho=\left\{\begin{array}{ll}
\ds\frac{-\pi(1+\e^{-2i\alpha\pi})}{2(\eta^{2}+\omega^{2})^{\frac{1-\alpha}{2}}\sin(2\alpha\pi)}\e^{2i(\alpha-1)\theta}&\text{if }\ds\alpha\neq \frac{1}{2}
\\
\ds\frac{\pi}{2(\eta^{2}+\omega^{2})^{\frac{1}{4}}\e^{i\theta}}&\text{if }\ds\alpha=\frac{1}{2},
\end{array}\right.
\end{equation}
where  we have denoted by $\ds\theta=\arccos\left(-\frac{\sqrt{\frac{\sqrt{\eta^{2}+\omega^{2}}-\eta}{2}}}{(\eta^{2}+\omega^{2})^{\frac{1}{4}}}\right)$.
\end{lem}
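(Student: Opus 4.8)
The integral in question is a standard Mellin-type integral that can be evaluated by contour integration. Set $a = \eta + i\omega$, so $\re(a) > 0$, and consider
\[
\int_{0}^{+\infty}\frac{\rho^{2\alpha-1}}{\rho^{2}+a}\,\ud\rho.
\]
The substitution $\rho = a^{1/2}\,r$ (choosing the branch of $a^{1/2}$ with positive real part, which is legitimate since $a$ lies off the negative real axis) formally reduces this to $a^{\alpha-1}\int_{0}^{+\infty} r^{2\alpha-1}/(r^{2}+1)\,\ud r$, and the latter integral is the classical Beta-function value $\frac{\pi}{2\sin(\alpha\pi)}$ for $0<\alpha<1$. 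So the whole point is to (i) justify the rotation of the contour from the positive real axis to the ray $\arg\rho = \tfrac12\arg a$, and (ii) write $a^{\alpha-1}$ explicitly in polar form. For (i) I would integrate $z^{2\alpha-1}/(z^2+a)$ (with the principal branch of $z^{2\alpha-1}$, cut along the negative reals) over the boundary of the circular sector between angle $0$ and angle $\psi := \tfrac12\arg a \in (0,\pi/2)$, with inner radius $\varepsilon$ and outer radius $R$; the pole of the integrand sits at $z = i a^{1/2}$, which has argument $\psi + \pi/2 > \psi$, hence lies outside the sector, so Cauchy's theorem gives zero. The arc contributions vanish as $\varepsilon\to 0$ (since $2\alpha-1 > -1$) and $R\to\infty$ (since $2\alpha - 1 < 1$), which yields the claimed identity once the ray integral is re-parametrized back to $[0,\infty)$.

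With the identity $\int_{0}^{+\infty}\rho^{2\alpha-1}/(\rho^2+a)\,\ud\rho = \frac{\pi}{2\sin(\alpha\pi)}\, a^{\alpha-1}$ in hand, it remains to unwind $a^{\alpha-1}$ for $a = \eta + i\omega$. Write $a = |a|\,\e^{2i\theta'}$ where $|a| = \sqrt{\eta^2+\omega^2}$ and $2\theta'$ is the principal argument of $a$; then $a^{\alpha-1} = (\eta^2+\omega^2)^{(\alpha-1)/2}\,\e^{2i(\alpha-1)\theta'}$. The remaining bookkeeping is to check that $\theta'$ coincides with the quantity $\theta$ defined in the statement via $\cos\theta = -\sqrt{(\sqrt{\eta^2+\omega^2}-\eta)/2}\,/(\eta^2+\omega^2)^{1/4}$; this follows from the half-angle formula, noting that $a^{1/2}$ has real part $\sqrt{(\eta^2+\omega^2)^{1/2}+\eta)/2}$ and (for $\omega > 0$) that the relevant angle for the rotated contour is measured so that the stated sign of the cosine is the correct one. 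Finally, when $\alpha = 1/2$ one has $\sin(2\alpha\pi) = 0$, so the generic formula is a $0/0$ expression; here I would simply observe that $1 + \e^{-2i\alpha\pi} = 1 + \e^{-i\pi} = 0$ as well, and evaluate the $\alpha = 1/2$ case directly from $a^{\alpha-1} = a^{-1/2}$, i.e. $\frac{\pi}{2}\,a^{-1/2} = \frac{\pi}{2(\eta^2+\omega^2)^{1/4}\e^{i\theta}}$, matching the second branch.

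The genuine obstacle is purely bookkeeping with branches of the square root and the fractional power: making sure the argument $\theta$ as defined in the statement (with that particular sign inside the arccos) is exactly the one produced by the contour rotation, uniformly for both signs of $\omega$, and confirming the trigonometric rewriting $\frac{\pi}{2\sin(\alpha\pi)} a^{\alpha-1} = \frac{-\pi(1+\e^{-2i\alpha\pi})}{2(\eta^2+\omega^2)^{(1-\alpha)/2}\sin(2\alpha\pi)}\e^{2i(\alpha-1)\theta}$ via the double-angle identity $\sin(2\alpha\pi) = 2\sin(\alpha\pi)\cos(\alpha\pi)$ together with $1 + \e^{-2i\alpha\pi} = 2\e^{-i\alpha\pi}\cos(\alpha\pi)$. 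Once those elementary manipulations are pinned down, no analytic difficulty remains; the estimates on the circular arcs are routine given the constraint $0<\alpha<1$.
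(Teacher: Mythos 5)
Your evaluation of the integral itself is correct and follows a genuinely different route from the paper's. The paper integrates $z^{2\alpha-1}/(z^{2}+\eta+i\omega)$ over a keyhole contour surrounding the branch cut on the negative real axis: since $z^{2}$ is even, the two rays at angles $\pm(\pi-\varepsilon)$ each reproduce the original integral up to the phases $e^{\pm 2i\alpha\pi}$, and the residue theorem applied to the two poles $z_{1,2}=\pm i(\eta+i\omega)^{1/2}$ gives the stated expression directly in terms of $z_{1}^{2\alpha-2}+z_{2}^{2\alpha-2}$. You instead rotate the contour through a thin pole-free sector onto the ray $\arg z=\tfrac12\arg(\eta+i\omega)$ and quote the Beta integral; this avoids residues altogether and makes it transparent that the answer is simply $\tfrac{\pi}{2\sin(\alpha\pi)}(\eta+i\omega)^{\alpha-1}$ with the principal branch. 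The sector argument and the arc estimates (exponents $2\alpha$ and $2\alpha-2$) are fine.

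The gap sits exactly in the step you yourself flag as the remaining bookkeeping, and your proposed resolution of it is wrong. Writing $r=\sqrt{\eta^{2}+\omega^{2}}$ and $\phi=\arg(\eta+i\omega)\in(-\pi/2,\pi/2)$, the half-angle formulas give $\cos(\phi/2)=+\sqrt{(r+\eta)/(2r)}$, whereas the $\theta$ of the statement satisfies $\cos\theta=-\sqrt{(r-\eta)/(2r)}=-|\sin(\phi/2)|$, i.e. $\theta=\tfrac{\pi}{2}+\tfrac{|\phi|}{2}$. So $\theta$ is \emph{not} your $\theta'=\tfrac12\arg(\eta+i\omega)$; it is (for $\omega>0$) the argument of the pole $i(\eta+i\omega)^{1/2}$, shifted by $\pi/2$. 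The first line of \eqref{LUSFF1} does agree with your value for $\omega>0$, but only because the extra factor $e^{2i(\alpha-1)\pi/2}=-e^{i\alpha\pi}$ produced by this shift cancels the leftover $-e^{-i\alpha\pi}$ coming from $\tfrac{1+e^{-2i\alpha\pi}}{\sin(2\alpha\pi)}=\tfrac{e^{-i\alpha\pi}}{\sin(\alpha\pi)}$; with $\theta=\theta'$ your two trigonometric identities leave that factor uncancelled. For $\alpha=1/2$ the mismatch does not resolve itself: your (correct) value $\tfrac{\pi}{2}(\eta+i\omega)^{-1/2}=\tfrac{\pi}{2}r^{-1/2}e^{-i\phi/2}$ equals $i$ times the printed $\tfrac{\pi}{2}r^{-1/2}e^{-i\theta}$, so the asserted ``match'' with the second line is false as written — in fact the printed second line is itself off by a factor of $i$ (the paper's Case 2 loses an $i$ when evaluating the partial-fraction integral), so you should not force your answer to agree with it. Finally, the stated $\theta$ depends only on $\omega^{2}$ while the integral is not invariant under $\omega\mapsto-\omega$; both you and the paper need to handle $\omega<0$ by complex conjugation.
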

\begin{proof}
The two case are proven as follow:

\underline{\textbf{Case 1:} $\ds\eta\neq\frac{1}{2}$.} In this case the integral can be evaluated using the method of residues. Integrating along the positive oriented contour depicted in Figure \ref{fig1FF}.
\begin{figure}[htbp]
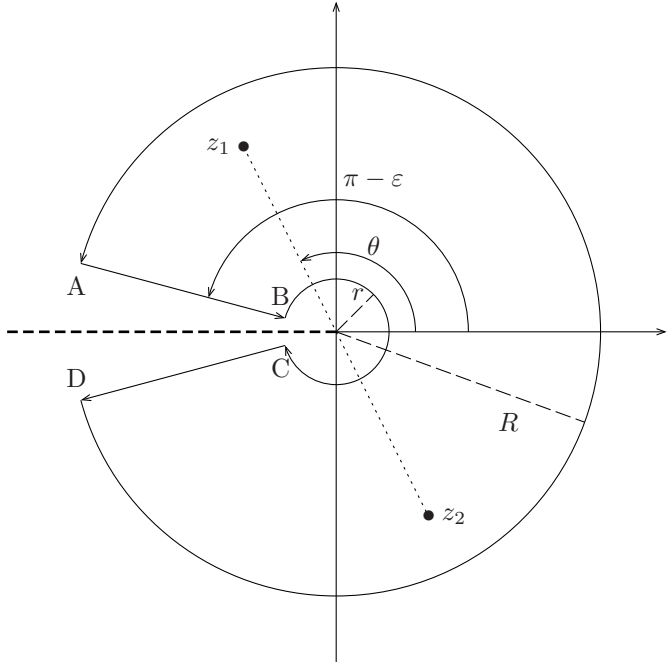

\figinit{pt}
\figpt 0:(0,0)
\figpt 1:(100,0)
\figptrot 2 :A = 1 /0, 165/
\figptrot 3 :D = 1 /0, 195/
\figpt 4:(20,0)
\figptrot 5 :B = 4 /0, 165/
\figptrot 6 :C = 4 /0, 195/
\figpt 7:(-35,70)
\figpt 8:(35,-70)
\figptrot 9 : = 1 /0, 340/
\figptrot 10 : = 4 /0,45/
\figpt 11:(-125,0)
\psbeginfig{}
\psset(dash=5)
\psline[7,8]
\psset(dash=2)
\psline[0,9]
\psline[0,10]
\psset(width=1,dash=3)
\psline[0,11]
\psset(fillmode=yes,width=\defaultwidth,dash=\defaultdash)  \psset arrowhead(length=4)
\psarrowcircP 0;-20[5,6]
\psarrowcircP 0;100[6,5]
\psarrowcircP 0;30[1,7]
\psarrowcircP 0;50[1,2]
\psarrow[2,5]
\psarrow[6,3]
\psaxes 0(0,125,-125,125)
\psendfig
\figvisu{\figBoxA}{}{
\figwriten 3,5:(5)
\figwrites 2,6:(5)
\figwritew 9:$R$(25)
\figwritew 10:$r$(3.5)
\figwriten 10:$\theta$(15)
\figwriten 10:$\pi-\varepsilon$(40)
\figsetmark{$\bullet$}\figwritew 7:$z_{1}$(5)
\figsetmark{$\bullet$}\figwritee 8:$z_{2}$(5)
}
\centerline{\box\figBoxA}
\caption{Contour for evaluating the integral $\ds\int_{0}^{+\infty}\frac{\rho^{2\alpha-1}}{\rho^{2}+\eta+i\omega}\,\ud \rho$.}
\label{fig1FF}
\end{figure}
We set the function
$$
f(z)=\frac{z^{2\alpha-1}}{z^{2}+\eta+i\omega},\qquad\forall\,z\in\mathbb{C}\setminus\R_{-},
$$
whose poles are $z_{1}=(\eta^{2}+\omega^{2})^{\frac{1}{4}}\e^{i\theta}$, $z_{2}=(\eta^{2}+\omega^{2})^{\frac{1}{4}}\e^{i(\theta-\pi)}$ and eventually $z_{0}=0$ (see Figure \ref{fig1FF}). Clearly, we have
\begin{equation}\label{LUSFF2}
|zf(z)|\leq \frac{|z|^{2\alpha}}{\left||z|^{2}-(\eta^{2}+\omega^{2})^{\frac{1}{2}}\right|}
\end{equation}
which imply that
$$
\lim_{z\to 0}zf(z)=0,\qquad \lim_{|z|\to +\infty}zf(z)=0.
$$
Then by Jordon's lemmas we follow 
\begin{equation}\label{LUSFF3}
\lim_{r\to 0}\int_{\gamma_{r}}f(z)\,\ud z=0
\end{equation}
and
\begin{equation}\label{LUSFF4}
\lim_{R\to +\infty}\int_{\gamma_{R}}f(z)\,\ud z=0
\end{equation}
where $\gamma_{r}=r\e^{-it}$ and $\gamma_{R}=r\e^{it}$ for $t\in[-\pi+\varepsilon,\pi-\varepsilon]$ (see Figure \ref{fig1FF}).
\\
On the segment $[AB]$ one has $z=\gamma_{AB}(t)=\left[(1-t)R+rt\right]\e^{i(\pi-\varepsilon)}$ for $t\in[0,1]$ (see Figure \ref{fig1FF}), whence by Lebegue dominated convergence theorem we have
\begin{equation}\label{LUSFF5}
\int_{\gamma_{AB}}f(z)\,\ud z=\e^{i(\varepsilon+2\alpha\pi)}\int_{r}^{R}\frac{\rho^{2\alpha-1}}{\rho^{2}+\eta+i\omega}\,\ud\rho\,\longrightarrow\,-\e^{2i\alpha\pi}\int_{r}^{R}\frac{\rho^{2\alpha-1}}{\rho^{2}+\eta+i\omega}\,\ud\rho\;\text{ as }\;\varepsilon\searrow 0.
\end{equation}
On the segment $[CD]$ one has $z=\gamma_{CD}(t)=t\e^{i(-\pi+\varepsilon)}$ for $t\in[r,R]$ (see Figure \ref{fig1FF}), whence again by Lebegue dominated convergence theorem we have
\begin{equation}\label{LUSFF6}
\int_{\gamma_{CD}}f(z)\,\ud z=\int_{r}^{R}\frac{\rho^{2\alpha-1}\e^{2i\alpha(\varepsilon-\pi)}}{\rho^{2}\e^{2i(\varepsilon-\pi)}+\eta+i\omega}\,\ud\rho\,\longrightarrow\,\e^{-2i\alpha\pi}\int_{r}^{R}\frac{\rho^{2\alpha-1}}{\rho^{2}+\eta+i\omega}\,\ud\rho\;\text{ as }\;\varepsilon\searrow 0.
\end{equation}
By summing \eqref{LUSFF3}-\eqref{LUSFF6} and taking the limits as $r\searrow 0$ and $R\nearrow +\infty$, the method of residues leads to
\begin{equation*}
\begin{split}
\frac{\e^{-2i\alpha\pi}-\e^{2i\alpha\pi}}{2i\pi}\int_{0}^{+\infty}\frac{\rho^{2\alpha-1}}{\rho^{2}+\eta+i\omega}\,\ud\rho&=\underset{z=z_{1},z_{2}}{\mathrm{Res}}[f(z)]=\frac{z_{1}^{2\alpha-2}+z_{2}^{2\alpha-2}}{2}=\frac{\e^{2i(\alpha-1)\theta}[1+\e^{-2i\alpha\pi}]}{2(\eta^{2}+\omega^{2})^{\frac{1-\alpha}{2}}}
\end{split}
\end{equation*}
which leads to the second line of \eqref{LUSFF1}.

\underline{\textbf{Case 2:} $\ds\eta=\frac{1}{2}$.} Since $z_{1}$ and $z_{2}$ are the unique poles of $f$ then we can write
\begin{equation*}
\begin{split}
\int_{0}^{+\infty}\frac{\ud\rho}{\rho^{2}+\eta+i\omega}&=\frac{1}{2z_{1}}\int_{0}^{+\infty}\frac{\rho-\overline{z}_{1}}{\rho^{2}-2\re(z_{1})\rho+|z_{1}|^{2}}-\frac{\rho-\overline{z}_{2}}{\rho^{2}-2\re(z_{2})\rho+|z_{2}|^{2}}\,\ud\rho
\\
&=\frac{1}{2z_{1}}\int_{0}^{+\infty}\frac{\rho-\overline{z}_{1}}{(\rho^{2}-\re(z_{1}))^{2}+\im(z_{1})^{2}}-\frac{\rho-\overline{z}_{2}}{(\rho^{2}-\re(z_{2}))^{2}+\im(z_{2})^{2}}\,\ud\rho.
\end{split}
\end{equation*}
A straightforward calculation leads to
\begin{equation*}
\int_{0}^{+\infty}\frac{\ud\rho}{\rho^{2}+\eta+i\omega}=\frac{\pi}{2z_{1}},
\end{equation*}
which leads to the first line of \eqref{LUSFF1}. And this finish the proof.
\end{proof}
Let assume that $H$ is an infinite dimensional Hilbert space such that the imbedding $H_{\frac{1}{2}}\hookrightarrow H$ is compact. Since $A$ is a strictly positive operator with compact resolvent then there exist a sequence of eigenvalues $i\omega_{n}$ corresponding to the orthonormal base of the eigenfunctions $\phi_{n}=\left(\begin{array}{c}
\frac{u_{n}}{i\omega_{n}}
\\
u_{n}
\end{array}\right)$ of the operator $\mathcal{A}_{0}=\left(\begin{array}{cc}
0&I
\\
-A&0
\end{array}\right)$ such that $\ds\lim_{n\to+\infty}|\omega_{n}|=+\infty$ where $u_{n}\in H_{\frac{1}{2}}$.
\begin{thm}\label{LUSFF9}
Under the above assumptions we have
\begin{enumerate}
	\item If $B\in\mathcal{L}(U,H)$ the semigroup $e^{t\mathcal{A}}$ is not exponentially stable in the Hilbert space $\mathcal{H}$.
	\item If $B\in\mathcal{L}(U,H_{-\frac{1}{2}})$ the semigroup $e^{t\mathcal{A}}$ is not exponentially stable in the Hilbert space $\mathcal{H}$ providing one of the following statements
	\begin{itemize}
		\item[i)] For some $n\in\N$, $B^{*}u_{n}=0$ (in this case the semigroup $e^{t\mathcal{A}}$ is not even strongly stable and the energy is conserved for some data).
		\item[ii)] For all $n\in\N,$ $B^{*}u_{n}\neq 0$ such that there exists a sub-sequence of $(u_{n})$ verifying $BB^{*}u_{n}\in H$ and $\|BB^{*}u_{n}\|_{H}\leq C$ for some $C>0$ and for every $n\in\N$.
	\end{itemize}
\end{enumerate}
\end{thm}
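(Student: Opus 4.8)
The plan is to reduce everything, as usual, to the frequency–domain (Gearhart--Pr\"uss) characterization: a bounded $C_0$–semigroup $e^{t\mathcal{A}}$ on a Hilbert space is exponentially stable if and only if $i\R\subset\rho(\mathcal{A})$ and $\sup_{\omega\in\R}\|(i\omega I-\mathcal{A})^{-1}\|_{\mathcal{L}(\mathcal{H})}<+\infty$. Thus it is enough to produce $\omega_n\in\R$ and $X_n\in\mathcal{D}(\mathcal{A})$ with $\|X_n\|_{\mathcal{H}}$ bounded below by a positive constant and $\|(i\omega_n I-\mathcal{A})X_n\|_{\mathcal{H}}\to0$: if the semigroup were exponentially stable, applying $(i\omega_n I-\mathcal{A})^{-1}$ to $(i\omega_n I-\mathcal{A})X_n$ would force $\|X_n\|_{\mathcal{H}}\to0$, a contradiction.

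The candidate is built from the spectral data $\{(i\omega_n,u_n)\}$ of the conservative operator $\mathcal{A}_0$, normalized by $\|u_n\|_H=1$, so that $Au_n=\omega_n^2u_n$ and $\|u_n\|_{H_{\frac{1}{2}}}=|\omega_n|$. I would set
\[
\varphi_n(\xi)=\frac{p(\xi)}{|\xi|^2+\eta+i\omega_n}\,B^*u_n,\qquad X_n=\Big(\frac{u_n}{i\omega_n},\,u_n,\,\varphi_n\Big),
\]
the function $\varphi_n$ being chosen precisely so that the third component of $(i\omega_n I-\mathcal{A})X_n$ vanishes. A short computation using $Au_n=\omega_n^2u_n$ then gives
\[
(i\omega_n I-\mathcal{A})X_n=\big(0,\ F(\omega_n)\,BB^*u_n,\ 0\big),\qquad F(\omega_n):=\gamma\int_{\R}\frac{|\xi|^{2\alpha-1}}{|\xi|^2+\eta+i\omega_n}\,\ud\xi=2\gamma\int_0^{+\infty}\frac{\rho^{2\alpha-1}}{\rho^2+\eta+i\omega_n}\,\ud\rho,
\]
while $\|X_n\|_{\mathcal{H}}^2\geq\|u_n/(i\omega_n)\|_{H_{\frac{1}{2}}}^2+\|u_n\|_H^2=2$. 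The scalar ingredient is that $|F(\omega_n)|\to0$ as $|\omega_n|\to+\infty$: for $\eta>0$ this is exactly Lemma \ref{LUSFF8} (decay of order $|\omega_n|^{-(1-\alpha)}$, or $|\omega_n|^{-1/2}$ when $\alpha=\frac{1}{2}$), and for $\eta=0$ the scaling $\rho=|\omega_n|^{1/2}\sigma$ gives $F(\omega_n)=|\omega_n|^{\alpha-1}\,2\gamma\int_0^{+\infty}\sigma^{2\alpha-1}(\sigma^2+i\,\mathrm{sgn}\,\omega_n)^{-1}\,\ud\sigma\to0$ since $\alpha<1$.

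It then remains to split into cases according to whether $BB^*u_n$ lands in $H$, because $X_n\in\mathcal{D}(\mathcal{A})$ requires exactly that $Au_n/(i\omega_n)+\gamma B\int_{\R}p(\xi)\varphi_n(\xi)\,\ud\xi=-i\omega_nu_n+F(\omega_n)BB^*u_n$ belong to $H$ (the remaining conditions in \eqref{IFF7} being routine integrability statements about $\varphi_n$, valid because $B^*u_n\in U$). In case 1, $B\in\mathcal{L}(U,H)$ gives $BB^*\in\mathcal{L}(H)$, so $X_n\in\mathcal{D}(\mathcal{A})$ and $\|(i\omega_n I-\mathcal{A})X_n\|_{\mathcal{H}}\leq\|BB^*\|_{\mathcal{L}(H)}\,|F(\omega_n)|\to0$ with $\|X_n\|_{\mathcal{H}}\geq\sqrt{2}$; hence $e^{t\mathcal{A}}$ is not exponentially stable. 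In case 2(ii) the same argument runs verbatim along the prescribed subsequence, now using $\|BB^*u_n\|_H\leq C$. In case 2(i), if $B^*u_{n_0}=0$ then $\varphi_{n_0}=0$ and $X_{n_0}=(u_{n_0}/(i\omega_{n_0}),u_{n_0},0)\in\mathcal{D}(\mathcal{A})$ satisfies $\mathcal{A}X_{n_0}=i\omega_{n_0}X_{n_0}$; thus $i\omega_{n_0}$ is an eigenvalue of $\mathcal{A}$ lying on $i\R$, so $\|e^{t\mathcal{A}}X_{n_0}\|_{\mathcal{H}}=\|X_{n_0}\|_{\mathcal{H}}$ for all $t$, the energy is conserved, and the semigroup fails even to be strongly stable, a fortiori not exponentially stable.

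The resolvent estimate causes no real difficulty once Lemma \ref{LUSFF8} and the single scaling argument for $\eta=0$ are in hand. The step that requires the most care is the verification that $X_n\in\mathcal{D}(\mathcal{A})$ in case 2(ii): one must check the remaining membership conditions of \eqref{IFF7}, in particular $|\xi|\varphi_n\in L^2(\R;U)$, which reduces to the convergence of $\int_{\R}|\xi|^{2\alpha+1}(|\xi|^2+\eta+i\omega_n)^{-2}\,\ud\xi$ both near the origin and at infinity (it converges because $0<\alpha<1$), using only $B^*\in\mathcal{L}(H_{\frac{1}{2}},U)$ together with the hypothesis $BB^*u_n\in H$.
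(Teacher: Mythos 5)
Your proposal is correct and follows essentially the same route as the paper: the Gearhart--Pr\"uss characterization, the quasimode $X_n=\bigl(u_n/(i\omega_n),\,u_n,\,p(\xi)(|\xi|^2+\eta+i\omega_n)^{-1}B^*u_n\bigr)$ killing the first and third components of $(i\omega_nI-\mathcal{A})X_n$, Lemma \ref{LUSFF8} to show the surviving term $F(\omega_n)BB^*u_n$ tends to zero, and the eigenvalue-on-$i\R$ argument in case 2(i). The only differences are cosmetic: the paper pushes the computation slightly further to show that even the weaker resolvent bound \eqref{LUSFF7} fails, while you add a scaling argument covering $\eta=0$, which the paper's reliance on Lemma \ref{LUSFF8} (stated for $\eta>0$) does not.
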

\begin{proof}
To prove this theorem we shall use the frequency theorem method. We recall that a bounded C$_0$ semigroup generated by an operator $\mathcal{A}$ is exponentially stable if and only if $i\R\cap\sigma(\mathcal{A})=\emptyset$ and satisfies the following identity
$$
\limsup_{\omega\in\R, |\omega| \rightarrow + \infty}\|(i\omega I-\mathcal{A})^{-1}\|_{\mathcal{L}(\mathcal{H})}<+\infty.
$$ 
We distinguish now two cases.

\underline{\textbf{Case 1:} $B^{*}u_{n}=0$ for some $n\in\N$.} It is clear in this case that for a such $n\in\N$ we have $X_{n}=\left(\begin{array}{c}
\frac{u_{n}}{i\omega_{n}}
\\
u_{n}
\\
0
\end{array}\right)\in\mathcal{D}(\mathcal{A})$ and $(i\omega_{n}I-\mathcal{A})X_{n}=0$ which proves that $X_{n}$ is an eigenfunction corresponding to the eigenvalue $i\omega_{n}$. Thus, the semigroup $e^{t\mathcal{A}}$ is not uniformly stable.

\underline{\textbf{Case 2:} $B^{*}u_{n}\neq 0$ for all $n\in\N$.} In this part we shall prove a general result then given in the theorem. In fact, we will show that the following resolvent estimate 
\begin{equation}\label{LUSFF7}
\limsup_{\omega\in\R, |\omega|\rightarrow+\infty}\|\omega^{\alpha-1+\varepsilon}(i\omega I-\mathcal{A})^{-1}\|_{\mathcal{L}(\mathcal{H})}<+\infty
\end{equation}
is not even satisfied, for $\varepsilon>0$ small.

Let $\ds\varphi_{n}(\xi)=\frac{p(\xi)}{|\xi|^{2}+\eta+i\omega_{n}}B^{*}u_{n}$ and $X_{n}=\left(\begin{array}{c}
\frac{u_{n}}{i\omega_{n}}
\\
u_{n}
\\
\varphi_{n}
\end{array}\right)$. It is clear that the integrals 
$$
\int_{\R}\frac{p(\xi)^{2}}{|\xi|^{2}+\eta+i\omega_{n}}\,\ud\xi \qquad \text{and} \qquad \int_{\R}\frac{|\xi|^{2}p(\xi)^{2}}{(|\xi|^{2}+\eta)^{2}+\omega_{n}^{2}}\,\ud\xi
$$
are well defined then $|\xi|\varphi_{n}\in L^{2}(\R;U)$ and by the assumption made in this theorem we have $\ds u_{n}+\gamma\,B\int_{\R}p(\xi)\varphi_{n}(\xi)\,\ud\xi\in H$. Besides, since we have
$$
\int_{\R}\|p(\xi)B^{*}u_{n}-(|\xi|^{2}+\eta)\varphi_{n}\|_{U}^{2}\,\ud\xi=\omega_{n}^{2}\|B^{*}u_{n}\|_{U}^{2}\int_{\R}\frac{p(\xi)^{2}}{(|\xi|^{2}+\eta)^{2}+\omega_{n}^{2}}\ud\xi,
$$
then $p(\xi)B^{*}u_{n}-(|\xi|^{2}+\eta)\varphi_{n}\in L^{2}(\R;U)$ and this shows that $X_{n}\in\mathcal{D}(\mathcal{A})$.

We set now $Y_{n}=\left(\begin{array}{c}
f_{n}
\\
g_{n}
\\
h_{n}
\end{array}\right)\in\mathcal{H}$ such that
$$
(i\omega_{n}I-\mathcal{A})X_{n}=Y_{n}.
$$
Since we have $f_{n}=h_{n}=0$ and
$$
g_{n}=\gamma \,BB^{*}u_{n}\,\int_{\R}\frac{p(\xi)^{2}}{|\xi|^{2}+\eta+i\omega_{n}}\,\ud\xi
$$
We set now
$$
\kappa_{n}=\left\{\begin{array}{cl}
\ds\e^{-i\theta_{n}}&\text{if }\ds\alpha=\frac{1}{2}
\\
\ds-\frac{\pi(1+\e^{-2i\alpha\pi})}{2\cos(\alpha\pi)}\e^{2i(\alpha-1)\theta_{n}}&\text{if }\ds\alpha\neq \frac{1}{2},
\end{array}\right.
$$
where $\ds\theta_{n}=\arccos\left(-\frac{\sqrt{\frac{\sqrt{\eta^{2}+\omega_{n}^{2}}-\eta}{2}}}{(\eta^{2}+\omega_{n}^{2})^{\frac{1}{4}}}\right)$. According to Lemma \ref{LUSFF8}, the function $g_{n}$ can be written as follow
$$
g_{n}=\frac{\kappa_{n}}{(\eta_{n}^{2}+\omega_{n}^{2})^{\frac{1-\alpha}{2}}} BB^{*}u_{n}.
$$
then using the assumption made on the boundedness of the sequence $\left(\|BB^{*}u_n\|_{H}\right)$ we follow that
$$
\omega_{n}^{1-\alpha-\varepsilon}\|g_{n}\|_{H}\leq \frac{C\omega_{n}^{1-\alpha-\varepsilon}}{(\eta^{2}+\omega_{n}^{2})^{\frac{1-\alpha}{2}}}\|BB^{*}u_n\|_{H}\,\longrightarrow\,0\;\text{ as }n\nearrow+\infty.
$$
Hence, by assuming that the imaginary axis is a subset of the resolvent set , we follow
\begin{equation*}
\begin{split}
\limsup_{\omega\in\R, |\omega| \rightarrow + \infty}\|\omega^{\alpha-1+\varepsilon}(i\omega I-\mathcal{A})^{-1}\|_{\mathcal{L}(\mathcal{H})}&\geq\sup_{n\in\N}\|\omega_{n}^{\alpha-1+\varepsilon}(i\omega_{n} I-\mathcal{A})^{-1}\|_{\mathcal{L}(\mathcal{H})}
\\
&\geq\sup_{n\in\N}\omega_{n}^{\alpha-1+\varepsilon}\frac{\|(i\omega_{n}I-\mathcal{A})^{-1}(Y_{n})\|_{\mathcal{H}}}{\|Y_{n}\|_{\mathcal{H}}}
\\
&\geq\lim_{n\to+\infty}\omega_{n}^{\alpha-1+\varepsilon}\frac{\|X_{n}\|_{\mathcal{H}}}{\|Y_{n}\|_{\mathcal{H}}}\geq\lim_{n\to+\infty}\frac{\omega_{n}^{\alpha-1+\varepsilon}}{\|g_{n}\|_{H}}=+\infty.
\end{split}
\end{equation*}
Thus, \eqref{LUSFF7} is not satisfied. So that, the semigroup $e^{t\mathcal{A}}$ is not exponentially stable.
\end{proof}
\begin{rem}
In the infinite dimensional case and when the embedding $H_{\frac{1}{2}}\hookrightarrow H$ is compact then under the assumptions made by Theorem \ref{LUSFF9} the prove of the previous theorem shows that the semigroup $e^{t\mathcal{A}}$ is at least dissipating over the time as $t^{-\frac{1}{1-\alpha}}$. In the next section we will show under some assumptions that the semigroup $e^{t\mathcal{A}}$ is decreasing over the time as $t^{-\frac{1}{1-\alpha}}$. Hence this proves a sharp decay rate on the energy of the system \eqref{IFF1}.
\end{rem}
\section{Non-uniform stabilization}\label{NUSFF}
This section is devoted to study the non uniform stabilization of system \eqref{IFF1}-\eqref{IFF3}. Under some assumptions on the behavior of an auxiliary dissipative operator whose dissipation is generated by the classical $BB^{*}$ operator we prove a polynomial decay result for the system \eqref{IFF1}-\eqref{IFF3}. For this purpose we will use a frequency domain approach.
\begin{prop}
Assume that $\eta=0$, then the operator $-\mathcal{A}$ is not onto and consequently $0\in\sigma(\mathcal{A})$.
\end{prop}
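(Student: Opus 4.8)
The plan is to show that $-\mathcal{A}$ fails to be surjective by exhibiting a target vector $Z=(f,g,h)\in\mathcal{H}$ that cannot lie in the range of $-\mathcal{A}$. First I would write out the equation $-\mathcal{A}X=Z$ componentwise for $X=(u,v,\varphi)\in\mathcal{D}(\mathcal{A})$: the first line gives $-v=f$, the third line (with $\eta=0$) gives $-|\xi|^2\varphi(\xi)+p(\xi)B^*v=h(\xi)$, i.e. $\varphi(\xi)=\frac{p(\xi)B^*v-h(\xi)}{|\xi|^2}$, and the second line gives $Au+\gamma B\int_{\R}p(\xi)\varphi(\xi)\,\ud\xi=-g$. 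The obstruction I expect lies in the third equation near $\xi=0$: for $\varphi$ to be in $V=L^2(\R;U)$, the numerator $p(\xi)B^*v-h(\xi)$ must vanish fast enough at $\xi=0$ to compensate the $|\xi|^{-2}$ singularity, and since $p(\xi)=|\xi|^{\frac{2\alpha-1}{2}}\to\infty$ (for $\alpha<1/2$) or is bounded but nonzero, while $h$ is merely an arbitrary $L^2$ function with no prescribed behavior at $0$, one cannot solve for $\varphi\in V$ in general.

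More precisely, I would argue as follows. Suppose $X=(u,v,\varphi)\in\mathcal{D}(\mathcal{A})$ satisfies $-\mathcal{A}X=Z$. Then $v=-f$ is determined, and from the membership conditions defining $\mathcal{D}(\mathcal{A})$ we need both $|\xi|\varphi\in L^2(\R;U)$ and $-|\xi|^2\varphi+p(\xi)B^*v\in L^2(\R;U)$; combined with $\varphi\in V$ this forces $p(\xi)B^*v-h(\xi)=|\xi|^2\varphi(\xi)$ with the right-hand side in $L^2$ and vanishing (in the $L^2$ averaged sense) like $|\xi|^2$ near $0$. Hence $h(\xi)=p(\xi)B^*v(\xi)-|\xi|^2\varphi(\xi)$ near $\xi=0$. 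Now choose $f=0$ (so $v=0$), $g=0$, and pick $h\in L^2(\R;U)$ supported near $\xi=0$ with $\int_{B(0,1)}\frac{\|h(\xi)\|_U^2}{|\xi|^4}\,\ud\xi=+\infty$ — for instance $h(\xi)=|\xi|\,u_0$ on $B(0,1)$ for a fixed unit vector, extended by $0$, which is in $L^2$ since $\int_{B(0,1)}|\xi|^2\,\ud\xi<\infty$ in one dimension. Then any solution would require $\varphi(\xi)=-h(\xi)/|\xi|^2$ on $B(0,1)$, whence $\|\varphi\|_V^2\geq\int_{B(0,1)}\frac{\|h(\xi)\|_U^2}{|\xi|^4}\,\ud\xi=\int_{B(0,1)}|\xi|^{-2}\,\ud\xi=+\infty$, contradicting $\varphi\in V$. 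Therefore $Z\notin\mathrm{Ran}(-\mathcal{A})$, so $-\mathcal{A}$ is not onto; equivalently $0\in\sigma(\mathcal{A})$.

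The main subtlety I anticipate is making sure the chosen $Z$ genuinely lies in $\mathcal{H}=H_{1/2}\times H\times V$ (here $f=0\in H_{1/2}$, $g=0\in H$, and one must check $h\in V$, which holds because $|\xi|^2$ is integrable on the unit ball of $\R$ while it is $|\xi|^4$ that is not integrable enough against $|\xi|^{-4}$), and verifying that $B^*$ plays no role in the obstruction — indeed by taking $v=0$ the term $p(\xi)B^*v$ drops out entirely, so the argument is insensitive to the structure of $B$. One should also double-check that the first two components impose no competing constraint: with $f=g=0$ and $\varphi$ already forced to be non-$L^2$, there is nothing left to verify, since the contradiction is reached before the equation $Au+\gamma B\int p\varphi\,\ud\xi=0$ even needs to be consulted. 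A brief remark that the same computation shows $\varphi\mapsto -|\xi|^2\varphi$ has non-closed range, or that the kernel/cokernel mismatch is located at the low frequencies $\xi\to 0$ which is exactly where the $\eta=0$ degeneracy bites, would round out the proof. Finally, since $\mathcal{A}$ generates a $C_0$-semigroup (Theorem \ref{WFF1}), $\mathcal{A}$ is closed, so $0\in\sigma(\mathcal{A})$ follows immediately from the failure of surjectivity of $0\cdot I-\mathcal{A}=-\mathcal{A}$.
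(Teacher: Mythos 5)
Your proof is correct and takes essentially the same route as the paper's: both reduce $-\mathcal{A}X=Z$ with $f=g=0$ to $\varphi(\xi)=h(\xi)/|\xi|^{2}$ and then exhibit an $h\in L^{2}(\R;U)$ for which this $\varphi$ fails to be square-integrable near $\xi=0$ (the paper chooses $h(\xi)=\psi/(1+|\xi|)$ rather than your compactly supported $|\xi|\,u_{0}$, but the mechanism is identical). The small sign slip in your third component equation is immaterial since you take $v=0$.
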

\begin{proof}
Let $Y=(0,0,h(\xi))\in\mathcal{H}$ and assume that there exists $X=(u,v,\varphi)\in\mathcal{D}(\mathcal{A})$ such that
$$
-\mathcal{A}X=Y.
$$
It follows that $v=0$, $\ds\varphi(\xi)=\frac{h(\xi)}{|\xi|^{2}}$ and 
$\ds Au+\gamma B\int_{\R}\frac{p(\xi)h(\xi)}{|\xi|^{2}}\,\ud\xi =0$. 
Let $\psi\in U$ such that $\psi\neq 0$ and we set $\ds h(\xi)=\frac{1}{(1+|\xi|)}\psi$. 
It is clear that $h\in L^{2}(\R;U)$. However, $\varphi\notin L^{2}(\R;U)$. Thus, the operator $-\mathcal{A}$ is not onto. This complete the proof.
\end{proof}
\begin{lem}\label{NUSFF1}
Let $\omega\in\R^{*}$ then for any fixed $\eta>0$ and $0<\alpha<1$ we have
\begin{equation}\label{NUSFF2}
\int_{0}^{+\infty}\frac{\rho^{2\alpha-1}}{(\rho^{2}+\eta)^{2}+\omega^{2}}\,\ud\rho=\left\{\begin{array}{ll}
\ds\frac{\sin(2(\alpha-1)(\pi-\phi))-\sin(2(\alpha-1)\phi)}{\sin(2\alpha\pi)\sin(2\phi)(\eta^{2}+\omega^{2})^{1-\frac{\alpha}{2}}}&\text{if }\ds\alpha\neq\frac{1}{2}
\\
\ds\frac{3(2\pi-\phi)}{8(\eta^{2}+\omega^{2})^{\frac{3}{4}}}&\text{if }\ds\alpha=\frac{1}{2},
\end{array}\right.
\end{equation}
where we have denoted by $\ds\phi=\arccos\left(\frac{\sqrt{\frac{\sqrt{\eta^{2}+\omega^{2}}-\eta}{2}}}{(\eta^{2}+\omega^{2})^{\frac{1}{4}}}\right)$.
\end{lem}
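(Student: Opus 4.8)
The plan is to compute the integral $\int_{0}^{+\infty}\frac{\rho^{2\alpha-1}}{(\rho^{2}+\eta)^{2}+\omega^{2}}\,\ud\rho$ by the same contour-integration technique used in the proof of Lemma \ref{LUSFF8}, only now applied to the function
$$
g(z)=\frac{z^{2\alpha-1}}{(z^{2}+\eta)^{2}+\omega^{2}}=\frac{z^{2\alpha-1}}{(z^{2}+\eta+i\omega)(z^{2}+\eta-i\omega)},\qquad z\in\mathbb{C}\setminus\R_{-}.
$$
First I would identify the poles inside the keyhole contour of Figure \ref{fig1FF}: writing $\eta+i\omega=(\eta^{2}+\omega^{2})^{1/2}\e^{i\psi_{0}}$ with $\psi_{0}=\arccos(\eta/(\eta^{2}+\omega^{2})^{1/2})$, the four roots of $(z^{2}+\eta)^{2}+\omega^{2}=0$ are $z=(\eta^{2}+\omega^{2})^{1/4}\e^{i\phi_{k}}$; the two lying in the sector $|\arg z|<\pi$ (hence enclosed) are $z_{1}=(\eta^{2}+\omega^{2})^{1/4}\e^{i(\pi-\phi)}$ and $z_{2}=(\eta^{2}+\omega^{2})^{1/4}\e^{i(\phi-\pi)}$, where $\phi$ is the angle defined in the statement (note $\phi=\pi/2-\theta/... $ is the complement of the angle $\theta$ from Lemma \ref{LUSFF8}, since $\cos\phi=\sqrt{(\sqrt{\eta^2+\omega^2}-\eta)/2}/(\eta^2+\omega^2)^{1/4}$). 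I would double-check that $z=0$ is not a pole and, for $\alpha\ne\frac12$, that the branch point at the origin contributes nothing: from the bound $|zg(z)|\le |z|^{2\alpha}/\bigl||z|^{2}-\eta|^{2}+\dots$ one gets $zg(z)\to 0$ as $z\to 0$ and as $|z|\to\infty$, so by Jordan-type estimates the small and large arcs $\gamma_{r},\gamma_{R}$ drop out exactly as in \eqref{LUSFF3}–\eqref{LUSFF4}.

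Next I would evaluate the two rectilinear pieces. On the ray just above $\R_{-}$ one has $z=\rho\e^{i(\pi-\varepsilon)}$ and the contribution tends to $-\e^{2i\alpha\pi}\int_{r}^{R}\frac{\rho^{2\alpha-1}}{(\rho^{2}+\eta)^{2}+\omega^{2}}\,\ud\rho$ as $\varepsilon\searrow 0$, exactly as in \eqref{LUSFF5} (the denominator is even in $z$ so $z^{2}\e^{2i(\pi-\varepsilon)}\to\rho^{2}$ with no sign change); on the ray just below one gets $+\e^{-2i\alpha\pi}$ times the same integral, as in \eqref{LUSFF6}. Adding all four pieces and applying the residue theorem gives
$$
\bigl(\e^{-2i\alpha\pi}-\e^{2i\alpha\pi}\bigr)\int_{0}^{+\infty}\frac{\rho^{2\alpha-1}}{(\rho^{2}+\eta)^{2}+\omega^{2}}\,\ud\rho=2i\pi\Bigl(\operatorname*{Res}_{z=z_{1}}g+\operatorname*{Res}_{z=z_{2}}g\Bigr),
$$
i.e. $-2i\sin(2\alpha\pi)$ times the integral equals $2i\pi$ times the sum of residues. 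The residue at a simple root $z_{j}$ of the quartic is $z_{j}^{2\alpha-1}/\bigl(4z_{j}(z_{j}^{2}+\eta)\bigr)$; using $z_{j}^{2}+\eta=\pm i\omega$ at the two enclosed roots (with opposite signs at $z_1$ and $z_2$) this simplifies to a combination of the form $z_{j}^{2\alpha-2}/(4\cdot(\pm i\omega))$, and substituting $z_{1}^{2\alpha-2}=(\eta^{2}+\omega^{2})^{(\alpha-1)/2}\e^{2i(\alpha-1)(\pi-\phi)}$, $z_{2}^{2\alpha-2}=(\eta^{2}+\omega^{2})^{(\alpha-1)/2}\e^{2i(\alpha-1)(\phi-\pi)}$ and $\omega=(\eta^{2}+\omega^{2})^{1/2}\sin(2\phi)$ (which follows from $\sin\phi\cos\phi$ and the half-angle identities relating $\phi$ to $\eta,\omega$) one collapses the expression to the claimed formula $\frac{\sin(2(\alpha-1)(\pi-\phi))-\sin(2(\alpha-1)\phi)}{\sin(2\alpha\pi)\sin(2\phi)(\eta^{2}+\omega^{2})^{1-\alpha/2}}$.

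For the case $\alpha=\frac12$ the integrand becomes $\frac{1}{\rho\,((\rho^{2}+\eta)^{2}+\omega^{2})}$, which has a genuine singularity at $\rho=0$ — so the contour argument above degenerates and I would instead proceed by a direct partial-fraction decomposition in the variable $\rho^{2}$ (or equivalently substitute and integrate a rational function), exactly in the spirit of Case 2 of Lemma \ref{LUSFF8}; alternatively one can differentiate the $\alpha$-family formula or pass to the limit $\alpha\to\frac12$ with L'Hôpital on the ratio $\frac{\sin(2(\alpha-1)(\pi-\phi))-\sin(2(\alpha-1)\phi)}{\sin(2\alpha\pi)}$, which produces the factor $\frac{3(2\pi-\phi)}{8(\eta^{2}+\omega^{2})^{3/4}}$ after using $\sin(2\phi)$ cancellation. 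I expect the main obstacle to be purely bookkeeping: tracking the arguments of the fractional powers $z_{1}^{2\alpha-2}$, $z_{2}^{2\alpha-2}$ consistently with the chosen branch cut along $\R_{-}$, and correctly matching the half-angle relation between $\phi$ and $(\eta,\omega)$ so that the $\omega$ in the denominator turns into $(\eta^{2}+\omega^{2})^{1/2}\sin(2\phi)$; once those two identifications are pinned down the trigonometric simplification to \eqref{NUSFF2} is routine.
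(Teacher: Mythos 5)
Your overall strategy (keyhole contour around the cut on $\R_{-}$, vanishing small and large arcs, edge contributions $-\e^{2i\alpha\pi}I$ and $\e^{-2i\alpha\pi}I$, then residues) is exactly the paper's, but there is a genuine error in the pole count. Writing $-\eta+i\omega=(\eta^{2}+\omega^{2})^{1/2}\e^{i(\pi-\psi_{0})}$ with $\cos\psi_{0}=\eta/(\eta^{2}+\omega^{2})^{1/2}$, one checks $\phi=(\pi-\psi_{0})/2\in(0,\pi/2)$, so the four roots of $(z^{2}+\eta)^{2}+\omega^{2}=0$ sit at arguments $\phi$, $-\phi$, $\phi-\pi$ and $\pi-\phi$ — \emph{all four} lie in the sector $|\arg z|<\pi$ and are enclosed by the keyhole contour; none is excluded by the branch cut. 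The paper accordingly sums the residues over $z_{1}^{\pm}$ and $z_{2}^{\pm}$. You keep only the conjugate pair at $\pm(\pi-\phi)$; since these contribute $z_{j}^{2\alpha-2}/(4(z_{j}^{2}+\eta))$ with $z_{j}^{2}+\eta=\mp i\omega$, their sum collapses to a single term proportional to $\sin\bigl(2(\alpha-1)(\pi-\phi)\bigr)$, and the second term $-\sin\bigl(2(\alpha-1)\phi\bigr)$ in \eqref{NUSFF2} can never appear. That term comes precisely from the pair at $\pm\phi$ that you discarded, so your computation cannot reach the claimed formula. (Your identity $\omega=(\eta^{2}+\omega^{2})^{1/2}\sin(2\phi)$ and the residue formula for a simple root of the quartic are both fine.)

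A secondary point: for $\alpha=\tfrac12$ the exponent $2\alpha-1$ vanishes, so the integrand is the rational function $\bigl((\rho^{2}+\eta)^{2}+\omega^{2}\bigr)^{-1}$ with \emph{no} singularity at $\rho=0$; the contour argument degenerates not because of the origin but because $\e^{-2i\alpha\pi}-\e^{2i\alpha\pi}=-2i\sin(2\alpha\pi)=0$, i.e.\ the two edges of the cut cancel and the identity becomes vacuous. Your fallback (direct partial fractions in $\rho$, as in Case 2 of Lemma \ref{LUSFF8}) is indeed what the paper does, but the stated reason for switching methods is wrong. Fix the pole count and the $\alpha=\tfrac12$ justification and the argument goes through as in the paper.
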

\begin{proof}
This prove is the same as the one of Lemma \ref{LUSFF8}. By Keeping the same notations here we just sketch the proof.

\underline{\textbf{Case 1:} $\eta\neq \frac{1}{2}$}. We set the complex function
$$
f(z)=\frac{z^{2\alpha-1}}{(z^{2}+\eta)^{2}+\omega^{2}},\qquad\forall\, z\in\mathbb{C}\setminus\R_{-},
$$
whose poles are $z_{1}^{\pm}=(\eta^{2}+\omega^{2})^{\frac{1}{4}}\e^{\pm i\phi}$, $z_{2}^{\pm}=(\eta^{2}+\omega^{2})^{\frac{1}{4}}\e^{\pm i(\phi-\pi)}$ and eventually $z_{3}=0$. Using the same arguments as Lemma \ref{LUSFF8} we can show that
$$
\int_{\gamma_{r}}f(z)\,\ud z\,\longrightarrow\, 0\quad \text{as}\quad r\searrow 0,
$$
and
$$
\int_{\gamma_{R}}f(z)\,\ud z\,\longrightarrow\, 0\quad \text{as}\quad R\nearrow +\infty.
$$
where on the segments $[AB]$ and $[CD]$ we have
$$
\int_{\gamma_{AB}}f(z)\,\ud z=\int_{r}^{R}\frac{-\e^{2i\alpha(\pi-\epsilon)}\rho^{2\alpha-1}}{(\rho^{2}\e^{2i\epsilon}+\eta)^{2}+\omega^{2}}\,\ud\rho\,\longrightarrow\,\int_{r}^{R}\frac{-\e^{2i\alpha\pi}\rho^{2\alpha-1}}{(\rho^{2}+\eta)^{2}+\omega^{2}}\,\ud\rho\quad \text{as}\quad \varepsilon\searrow 0,
$$
and
$$
\int_{\gamma_{CD}}f(z)\,\ud z=\int_{r}^{R}\frac{\e^{2i\alpha(\epsilon-\pi)}\rho^{2\alpha-1}}{(\rho^{2}\e^{2i(\epsilon-\pi)}+\eta)^{2}+\omega^{2}}\,\ud\rho\,\longrightarrow\,\int_{r}^{R}\frac{\e^{-2i\alpha\pi}\rho^{2\alpha-1}}{(\rho^{2}+\eta)^{2}+\omega^{2}}\,\ud\rho\quad \text{as}\quad \varepsilon\searrow 0,
$$
Summing all these integrals and applying the residues theorem we obtain
$$
\int_{0}^{+\infty}\frac{-\sin(2\alpha\pi)\rho^{2\alpha-1}}{(\rho^{2}+\eta)^{2}+\omega^{2}}\,\ud\rho=\underset{z=z_{1}^{\pm},z_{2}^{\pm}}{\mathrm{Res}}[f(z)]=\frac{(\eta+\omega^{2})^{\frac{\alpha}{2}}(\sin(2(\alpha-1)(\pi-\phi))-\sin(2(\alpha-1)\phi))}{2\cos(\phi)}
$$
which leads obviously to the first line of \eqref{NUSFF2}.

\underline{\textbf{Case 2:} $\eta=\frac{1}{2}$}. In this case we have just to remark that
\begin{equation*}
\begin{split}
\frac{1}{(\rho^{2}+\eta)^{2}+\omega^{2}}&=\frac{1}{8\tau^{3}\cos(\phi)}\left[\frac{\rho+\tau\cos(\phi)}{\rho^{2}+2\tau\cos(\phi)\rho+\tau^{2}}-[\frac{\rho-\tau\cos(\phi)}{\rho^{2}-2\tau\cos(\phi)\rho+\tau^{2}}\right]
\\
&+6\tau\cos(\phi)\left[\frac{1}{\rho^{2}+2\tau\cos(\phi)\rho+\tau^{2}}+\frac{1}{\rho^{2}-2\tau\cos(\phi)\rho+\tau^{2}}\right]
\end{split}
\end{equation*}
where we have denoted by $\tau=(\eta^{2}+\omega^{2})^{\frac{1}{4}}$.
\end{proof}

Let's define now $\mathcal{H}_{0}=H_{\frac{1}{2}}\times H$ and let's consider the operator $\mathcal{A}_{0}:\mathcal{D}(\mathcal{A}_{0})\subset \mathcal{H}_0 \longrightarrow\mathcal{H}_{0}$ defined by
$$
\mathcal{A}_{0}=\left(\begin{array}{rc}
0&I
\\
-A&-BB^{*}
\end{array}\right)
$$
with domain
$$
\mathcal{D}(\mathcal{A}_{0})=\left\{(w,v)\in\mathcal{H}_{0}:\;v\in H_{\frac{1}{2}},\;Aw+BB^{*}v\in H\right\}.
$$ 
\begin{prop}
The operator $\mathcal{A}_{0}$ generates a $C_{0}$ semigroup of contractions in the Hilbert space $\mathcal{H}_{0}$. Moreover, the following auxiliary problem
\begin{equation}\label{NUSFF28}
\left\{\begin{array}{l}
\partial_{t}^{2} w(t)+Aw+BB^{*}\partial_{t}w(t)=0
\\
w(0)=w^{0},\;\partial_{t}w(0)=w^{1}.
\end{array}\right.
\end{equation}
admits a unique solution $w(t,x)$ in such a way that if $(w^{0},w^{1})\in\mathcal{D}(\mathcal{A}_{0})$ the solution $w(t,x)$ of \eqref{NUSFF28} verifying the following regularity
$$
(w,\partial_{t}w)\in\mathcal{C}([0,+\infty);\mathcal{D}(\mathcal{A}_{0}))\cap\mathcal{C}^{1}([0,+\infty);\mathcal{H}_{0}).
$$
and when $(w^{0},w^{1})\in\mathcal{H}_{0}$, we have 
$$
(w,\partial_{t}w)\in\mathcal{C}([0,+\infty);\mathcal{H}_{0}).
$$
The energy of the system \eqref{NUSFF28} defined as follow
$$
E_{0}(t)=\frac{1}{2}\left(\|\partial_{t} w(t)\|_{H}^{2}+\|w(t)\|_{H_{\frac{1}{2}}}^{2}\right),
$$
is decreasing over the time in particular we have
\begin{equation}\label{NUSFF30}
\frac{\ud E_{0}}{\ud t}(t)=-\|B^{*}\partial_{t} w(t)\|_{U}^{2}.
\end{equation}
\end{prop}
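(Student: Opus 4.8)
The plan is to apply the Lumer--Phillips theorem to $\mathcal{A}_{0}$, exactly in the spirit of the proof of Theorem \ref{WFF1}, and then to read off the asserted well-posedness and the energy identity from standard semigroup theory. First I would check that $\mathcal{A}_{0}$ is dissipative: for $X=(w,v)\in\mathcal{D}(\mathcal{A}_{0})$, using the self-adjointness of $A$ and the duality pairing between $H_{\frac{1}{2}}$ and $H_{-\frac{1}{2}}$ one gets
\begin{equation*}
\re\left\langle\mathcal{A}_{0}X,X\right\rangle_{\mathcal{H}_{0}}=\re\left\langle A^{\frac{1}{2}}v,A^{\frac{1}{2}}w\right\rangle_{H}-\re\left\langle Aw+BB^{*}v,v\right\rangle=-\|B^{*}v\|_{U}^{2}\leq 0,
\end{equation*}
since the two terms involving $A$ add up to a purely imaginary quantity.

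Next I would establish maximality, i.e. that $\lambda I-\mathcal{A}_{0}$ maps $\mathcal{D}(\mathcal{A}_{0})$ onto $\mathcal{H}_{0}$ for one (hence every) $\lambda>0$. Given $(f,g)\in\mathcal{H}_{0}$, solving $(\lambda I-\mathcal{A}_{0})(w,v)=(f,g)$ reduces to $v=\lambda w-f$ together with the elliptic equation
\begin{equation*}
\left(\lambda^{2}I+A+\lambda BB^{*}\right)w=g+\lambda f+BB^{*}f\in H_{-\frac{1}{2}}.
\end{equation*}
The sesquilinear form $a(w,z)=\lambda^{2}\langle w,z\rangle_{H}+\langle A^{\frac{1}{2}}w,A^{\frac{1}{2}}z\rangle_{H}+\lambda\langle B^{*}w,B^{*}z\rangle_{U}$ is bounded on $H_{\frac{1}{2}}$, because $B^{*}\in\mathcal{L}(H_{\frac{1}{2}},U)$, and coercive since $\re a(w,w)\geq\|w\|_{H_{\frac{1}{2}}}^{2}$; by the Lax--Milgram theorem the equation has a unique solution $w\in H_{\frac{1}{2}}$. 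Then $v=\lambda w-f\in H_{\frac{1}{2}}$ and $Aw+BB^{*}v=g-\lambda v\in H$, so $(w,v)\in\mathcal{D}(\mathcal{A}_{0})$. Finally $\mathcal{D}(\mathcal{A}_{0})$ is dense in $\mathcal{H}_{0}$: given $(f,g)\in\mathcal{H}_{0}$ pick $v_{n}\in H_{\frac{1}{2}}$ with $v_{n}\to g$ in $H$, then $\phi_{n}\in H$ with $\phi_{n}\to Af+BB^{*}v_{n}$ in $H_{-\frac{1}{2}}$, and set $w_{n}=A^{-1}(\phi_{n}-BB^{*}v_{n})$; then $w_{n}\to f$ in $H_{\frac{1}{2}}$, $Aw_{n}+BB^{*}v_{n}=\phi_{n}\in H$, so $(w_{n},v_{n})\in\mathcal{D}(\mathcal{A}_{0})$ converges to $(f,g)$. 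By the Lumer--Phillips theorem (\cite[Theorem 4.3]{Pazy}), $\mathcal{A}_{0}$ generates a $C_{0}$ semigroup of contractions on $\mathcal{H}_{0}$.

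The stated regularity then follows at once: identifying the solution of \eqref{NUSFF28} with the orbit $(w,\partial_{t}w)(t)=e^{t\mathcal{A}_{0}}(w^{0},w^{1})$, classical semigroup theory gives $(w,\partial_{t}w)\in\mathcal{C}([0,+\infty);\mathcal{D}(\mathcal{A}_{0}))\cap\mathcal{C}^{1}([0,+\infty);\mathcal{H}_{0})$ when $(w^{0},w^{1})\in\mathcal{D}(\mathcal{A}_{0})$, and $(w,\partial_{t}w)\in\mathcal{C}([0,+\infty);\mathcal{H}_{0})$ when $(w^{0},w^{1})\in\mathcal{H}_{0}$. For the energy, if $(w^{0},w^{1})\in\mathcal{D}(\mathcal{A}_{0})$ then $E_{0}(t)=\tfrac{1}{2}\|(w,\partial_{t}w)(t)\|_{\mathcal{H}_{0}}^{2}$ is $C^{1}$ and
\begin{equation*}
\frac{\ud E_{0}}{\ud t}(t)=\re\left\langle\mathcal{A}_{0}(w,\partial_{t}w)(t),(w,\partial_{t}w)(t)\right\rangle_{\mathcal{H}_{0}}=-\|B^{*}\partial_{t}w(t)\|_{U}^{2},
\end{equation*}
which is \eqref{NUSFF30}; equivalently one pairs \eqref{NUSFF28} with $\partial_{t}w$ in the $H_{\frac{1}{2}}$--$H_{-\frac{1}{2}}$ duality. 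The case $(w^{0},w^{1})\in\mathcal{H}_{0}$ follows in integrated form by density. I expect the main obstacle to be the maximality step: one must set up the elliptic problem for $w$ in the correct functional framework — the pairing $H_{\frac{1}{2}}\leftrightarrow H_{-\frac{1}{2}}$, with $BB^{*}$ only bounded from $H_{\frac{1}{2}}$ into $H_{-\frac{1}{2}}$ — and then verify that the Lax--Milgram solution together with $v=\lambda w-f$ genuinely lies in $\mathcal{D}(\mathcal{A}_{0})$; the dissipativity, the density of the domain and the energy identity are routine once the semigroup framework is in place.
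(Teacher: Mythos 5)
Your proposal is correct and follows essentially the same route as the paper: dissipativity of $\mathcal{A}_{0}$ plus a Lax--Milgram argument for the surjectivity of $\lambda I-\mathcal{A}_{0}$ (the paper takes $\lambda=1$ and uses exactly your sesquilinear form), then Lumer--Phillips and the standard semigroup identity $\frac{\ud}{\ud t}\|X(t)\|_{\mathcal{H}_{0}}^{2}=2\re\langle\mathcal{A}_{0}X(t),X(t)\rangle_{\mathcal{H}_{0}}$ for the energy law. Your explicit verification that $\mathcal{D}(\mathcal{A}_{0})$ is dense is a harmless addition the paper omits (it is automatic for m-dissipative operators on a Hilbert space).
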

\begin{proof}
To show that $\mathcal{A}_{0}$ generates a $C_{0}$ semigroup of contractions we have to prove according to Lumer-Phillips' theorem (see \cite[Theorem 4.3]{Pazy}) that $\mathcal{A}_{0}$ is m-dissipative. First, let $(w,v)\in\mathcal{D}(\mathcal{A}_{0})$ then we have
$$
\re\left\langle \mathcal{A}_{0}\left(\begin{array}{c}
w
\\
v
\end{array}\right),\left(\begin{array}{c}
w
\\
v
\end{array}\right)\right\rangle_{\mathcal{H}_{0}}=-\|B^{*}v\|_{U}^{2}\leq 0,
$$
which proves that $\mathcal{A}_{0}$ is dissipative. It remind now to prove that the range of $I-\mathcal{A}_{0}$ is 
$  \mathcal{H}_0 $. 
For this purpose we let $(f,g)\in\mathcal{H}_{0}$ and we look for a couple $(w,v)\in\mathcal{D}(\mathcal{A}_{0})$ such that 
$$
(I-\mathcal{A}_{0})\left(\begin{array}{c}
w
\\
v
\end{array}\right)=\left(\begin{array}{c}
f
\\
g
\end{array}\right),
$$
or equivalently,
\begin{equation}\label{NUSFF29}
\left\{\begin{array}{l}
v=w+f
\\
Aw+w+BB^{*}w=g-f-BB^{*}f.
\end{array}\right.
\end{equation}
We consider now the following bilinear form on $H_{\frac{1}{2}}\times H_{\frac{1}{2}}$ defined by
$$
L(w,\psi)=\langle w,\psi\rangle_{H_{\frac{1}{2}}}+\langle w,\psi\rangle_{H}+\langle B^{*}w,B^{*}\psi\rangle_{U}.
$$
It is clear that $L$ is continuous and coercive form on $H_{\frac{1}{2}}\times H_{\frac{1}{2}}$ therefore according to 
Lax-Milgram theorem's there exist a unique $w\in H_{\frac{1}{2}}$ such that
$$
L(w,\psi)=\langle g-f,\psi\rangle_{H}-\langle B^{*}f,B^{*}\psi\rangle_{U},\quad \forall\,\psi\in H_{\frac{1}{2}}.
$$
Equivalently, this can be written as follows
$$
\langle Aw+BB^{*}(w+f),\psi\rangle_{H_{-\frac{1}{2}}\times H_{\frac{1}{2}}}=\langle g-f-w,\psi\rangle_{H},\quad \forall\,\psi\in H_{\frac{1}{2}}.
$$
In another words $Aw+BB^{*}(w+f)\in H$ and we have $Aw+w+BB^{*}w=g-f-BB^{*}f$. Since $v=w+f$ then $v\in H^{\frac{1}{2}}$. Hence, system \eqref{NUSFF29} admits a unique solution $(w,v)\in\mathcal{D}(\mathcal{A}_{0})$. Thus, the operator $\mathcal{A}_{0}$ is m-dissipative and consequently the existence and the uniqueness of the solution of problem \eqref{NUSFF28} holds with regularity as described above. Finally, a straightforward calculations gives \eqref{NUSFF30}.   
\end{proof}
Let $M$ be an increasing function in $\R_{+}$. We assume that $i\R\subset\rho(\mathcal{A}_{0})$ and the following growth on the resolvent
\begin{equation}\label{NUSFF3}
\limsup_{\omega\in\R, |\omega| \rightarrow + \infty}\| M(|\omega|)^{-1}(i\omega I-\mathcal{A}_{0})^{-1}\|_{\mathcal{L}(\mathcal{H}_{0})}<+\infty.
\end{equation}
This means according in particular  to Huang-Pr\"uss \cite{huang,pruss} and Batty and Duyckaerts \cite{Ba-Du:08} (see also Borichev and Tomilov theorem \cite[Theorem 2.4]{borichevtomilov}) respectively that the semigroup $e^{t\mathcal{A}_{0}}$ is exponentially stable if $M(|\omega|)=1$ and polynomially stable if  $M(|\omega|)=|\omega|^\ell$  for some $\ell>0$, namely we have
$$
\|\e^{t\mathcal{A}_{0}}\|_{\mathcal{L}(\mathcal{H}_{0})}\leq C\e^{-\delta t},\quad\forall\,t\geq 0
$$
for some $\delta>0$ when $M(|\omega|)=1$  and 
$$
\|\e^{t\mathcal{A}_{0}}(w^{0},w^{1})\|_{\mathcal{H}_{0}}\leq \frac{C}{(1+t)^{\frac{1}{\ell}}}\|(w^{0},w^{1})\|_{\mathcal{D}(\mathcal{A}_{0})},\quad\forall\,t\geq 0,
$$
for all $(w^{0},w^{1})\in \mathcal{D}(\mathcal{A}_{0})$ when $M(|\omega|)=|\omega|^\ell$. However, when $M(|\omega|)=e^{K_{0}|\omega|}$ for some $K_{0}>0$ imply from Burq~\cite{Burq:98} that the semigroup $e^{t\mathcal{A}_{0}}$ is logarithmically stable, namely we have
$$
\|\e^{t\mathcal{A}_{0}}(w^{0},w^{1})\|_{\mathcal{H}_{0}}\leq \frac{C}{\log^{k}(2+t)}\|(w^{0},w^{1})\|_{\mathcal{D}(\mathcal{A}_{0}^{k})},\quad\forall\,t\geq 0,
$$
for every $(w^{0},w^{1})\in \mathcal{D}(\mathcal{A}_{0}^{k})$ and $k\in\N^{*}$.
\begin{thm} \label{th: estimation resolvante}
We assume that $i\R\subset\rho(\mathcal{A})$ and the condition \eqref{NUSFF3} holds. Let $\eta>0$,  there exists a constant $ C>0$ such that
\begin{equation*}
\|(i\omega I-\mathcal{A})^{-1}\|_{\mathcal{L}(\mathcal{H})}\leq C|\omega|^{1-\alpha}M(|\omega|),\quad \forall\,\omega\geq1.
\end{equation*}
\end{thm}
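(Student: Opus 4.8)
The goal is to estimate $\|(i\omega I - \mathcal A)^{-1}\|$ in terms of the resolvent bound available for the auxiliary operator $\mathcal A_0$. I would proceed by contradiction in the standard frequency-domain style. Fix $\omega$ with $|\omega|\geq 1$ and suppose $(i\omega I - \mathcal A)X = Y$ with $X = (u,v,\varphi)\in\mathcal D(\mathcal A)$ and $Y = (f,g,h)\in\mathcal H$. Componentwise this reads $i\omega u - v = f$, $i\omega v + Au + \gamma B\int_{\R}p(\xi)\varphi(\xi)\,\ud\xi = g$, and $i\omega\varphi + (|\xi|^2+\eta)\varphi - p(\xi)B^*v = h$, so that $\varphi(\xi) = \frac{p(\xi)}{|\xi|^2+\eta+i\omega}\bigl(B^*v + h(\xi)\bigr)$. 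The first step is the dissipation identity: taking the real part of $\langle(i\omega I-\mathcal A)X,X\rangle_{\mathcal H}$ gives
\begin{equation*}
\gamma\int_{\R}(|\xi|^2+\eta)\|\varphi(\xi)\|_U^2\,\ud\xi = \re\langle Y,X\rangle_{\mathcal H}\leq \|Y\|_{\mathcal H}\|X\|_{\mathcal H}.
\end{equation*}

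**Reducing to the auxiliary system.** The key manoeuvre is to rewrite the first two equations for $(u,v)$ as a resolvent equation for $\mathcal A_0$ with a suitable right-hand side. Insert the expression for $\varphi$ into the term $\gamma B\int_{\R}p(\xi)\varphi(\xi)\,\ud\xi$; the part coming from $B^*v$ produces $\bigl(\gamma\int_{\R}\frac{p(\xi)^2}{|\xi|^2+\eta+i\omega}\,\ud\xi\bigr)BB^*v$, and by Lemma~\ref{LUSFF8} the scalar factor equals $\kappa(\omega)(\eta^2+\omega^2)^{-(1-\alpha)/2}$ for an explicit bounded factor $\kappa(\omega)$. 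Writing that factor as $1 + \bigl(\kappa(\omega)(\eta^2+\omega^2)^{-(1-\alpha)/2} - 1\bigr)$, the second equation becomes
\begin{equation*}
(i\omega I - \mathcal A_0)\begin{pmatrix}u\\v\end{pmatrix} = \begin{pmatrix}f\\ \tilde g\end{pmatrix},\qquad \tilde g = g + \bigl(1 - \kappa(\omega)(\eta^2+\omega^2)^{-\frac{1-\alpha}{2}}\bigr)BB^*v - \gamma B\!\int_{\R}\!\frac{p(\xi)h(\xi)}{|\xi|^2+\eta+i\omega}\,\ud\xi.
\end{equation*}
Applying the hypothesis \eqref{NUSFF3} yields $\|(u,v)\|_{\mathcal H_0}\leq C M(|\omega|)\|\tilde g\|_{\mathcal H_0} + CM(|\omega|)\|f\|_{H_{1/2}}$ (after absorbing the $f$-dependence through the second component — more precisely one applies the resolvent bound to $(f,\tilde g)$ as an element of $\mathcal H_0$, which is legitimate since $f\in H_{1/2}$). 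I then need to bound the three pieces of $\tilde g$: the $g$ term is $\leq\|Y\|_{\mathcal H}$; the $h$ term is $\leq C\|h\|_V$ by Cauchy–Schwarz in $\xi$ (the integral $\int_{\R}p(\xi)^2/|\,|\xi|^2+\eta+i\omega|^2\,\ud\xi$ is $O(|\omega|^{-(1-\alpha)})$, hence bounded); the dangerous term is $\bigl(1-\kappa(\omega)(\eta^2+\omega^2)^{-(1-\alpha)/2}\bigr)BB^*v$, whose coefficient is $O(1)$ but not small. One controls $\|BB^*v\|_{H_{-1/2}}\leq C\|v\|_{H_{1/2}}$, but that reintroduces $\|v\|_{H_{1/2}}$, so this must be handled together with the $\|X\|_{\mathcal H}$ coming from the $v$-equation — i.e. one does \emph{not} bound $BB^*v$ crudely but keeps $v$ and uses the dissipation.

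**Closing the estimate.** The route around the $BB^*v$ obstruction is to exploit that $B^*v$ is itself controlled by the dissipation: from $\varphi(\xi) = \frac{p(\xi)}{|\xi|^2+\eta+i\omega}(B^*v+h(\xi))$ and the dissipation bound, one extracts an estimate of the form $\|B^*v\|_U^2 \lesssim |\omega|^{1-\alpha}\bigl(\|Y\|_{\mathcal H}\|X\|_{\mathcal H} + \|Y\|_{\mathcal H}^2\bigr)$; concretely, $\int_{\R}(|\xi|^2+\eta)\frac{p(\xi)^2}{|\,|\xi|^2+\eta+i\omega|^2}\,\ud\xi$ behaves like $|\omega|^{-(1-\alpha)}$ up to constants, and comparing with $\gamma\int(|\xi|^2+\eta)\|\varphi\|^2 \leq \|Y\|\|X\|$ gives $\|B^*v\|_U \lesssim |\omega|^{(1-\alpha)/2}(\|Y\|^{1/2}\|X\|^{1/2}+\|Y\|)$. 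Feeding this back, the troublesome term contributes $\lesssim M(|\omega|)|\omega|^{(1-\alpha)/2}(\|Y\|^{1/2}\|X\|^{1/2}+\|Y\|)$ to $\|(u,v)\|_{\mathcal H_0}$. Finally, $\|X\|_{\mathcal H}^2 = \|(u,v)\|_{\mathcal H_0}^2 + \gamma\int_{\R}\|\varphi\|_U^2\,\ud\xi$; the last integral is bounded using $\int_{\R}\frac{p(\xi)^2}{|\,|\xi|^2+\eta+i\omega|^2}\,\ud\xi = O(|\omega|^{-(1-\alpha)})$ against $\|B^*v\|_U^2 + \|h\|_V^2$, giving $\gamma\int\|\varphi\|^2 \lesssim |\omega|^{-(1-\alpha)}\|B^*v\|_U^2 + \|h\|_V^2 \lesssim \|Y\|\|X\| + \|Y\|^2$. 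Collecting everything, $\|X\|_{\mathcal H} \lesssim M(|\omega|)|\omega|^{1-\alpha}\|Y\|_{\mathcal H} + \bigl(M(|\omega|)|\omega|^{(1-\alpha)/2} + 1\bigr)\|Y\|^{1/2}\|X\|^{1/2}$, and a Young-inequality absorption of the $\|X\|^{1/2}$ term yields $\|X\|_{\mathcal H}\leq C|\omega|^{1-\alpha}M(|\omega|)\|Y\|_{\mathcal H}$, which is the claim.

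**Main obstacle.** The delicate point is precisely the term $\bigl(1 - \kappa(\omega)(\eta^2+\omega^2)^{-(1-\alpha)/2}\bigr)BB^*v$ in $\tilde g$: its coefficient does not decay, so it cannot simply be thrown into the right-hand side and estimated by $\|Y\|$. The whole argument hinges on recognizing that $B^*v$ carries an extra factor $|\omega|^{(1-\alpha)/2}$ smallness relative to $\|X\|$ (coming from the dissipation combined with the explicit asymptotics of the $\rho$-integrals in Lemma~\ref{LUSFF8} and Lemma~\ref{NUSFF1}), which is exactly what lets the absorption go through and produces the $|\omega|^{1-\alpha}$ loss — no better, no worse — matching the sharp rate predicted by the Remark after Theorem~\ref{LUSFF9}. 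Care is also needed to justify that $i\omega I-\mathcal A_0$ is invertible for the relevant $\omega$ (this is part of hypothesis \eqref{NUSFF3}) and that the manipulation of $f$ through the $\mathcal H_0$-resolvent is clean; these are routine once the structure is in place.
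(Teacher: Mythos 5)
Your overall strategy --- dissipation identity, reduction to the auxiliary operator $\mathcal{A}_0$, and control of $B^*v$ through the damping integral --- is in the right spirit, but the specific reduction you propose has two genuine defects. First, the decomposition $(i\omega I-\mathcal{A}_0)(u,v)=(f,\tilde g)$ is not legitimate under the paper's hypotheses: your $\tilde g$ contains the terms $\bigl(1-\kappa(\omega)(\eta^2+\omega^2)^{-(1-\alpha)/2}\bigr)BB^*v$ and $\gamma B\int_{\R}\frac{p(\xi)h(\xi)}{|\xi|^2+\eta+i\omega}\,\ud\xi$, which for $B\in\mathcal{L}(U,H_{-\frac{1}{2}})$ only live in $H_{-\frac{1}{2}}$, not in $H$. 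Hence $(u,v)$ need not belong to $\mathcal{D}(\mathcal{A}_0)$ and $(f,\tilde g)\notin\mathcal{H}_0=H_{\frac{1}{2}}\times H$, so the hypothesis \eqref{NUSFF3}, which is an $\mathcal{L}(\mathcal{H}_0)$ bound, cannot be invoked. This is fatal precisely in the unbounded-$B$ applications the theorem is designed for (e.g.\ the fractional Kelvin--Voigt damping, where $B\psi=-\mathrm{div}(\sqrt{a}\,\psi)$). Second, even when $B$ is bounded into $H$, your bookkeeping does not deliver the stated rate: the dissipation gives $\|B^*v\|_U\lesssim|\omega|^{(1-\alpha)/2}\bigl(\|Y\|^{1/2}\|X\|^{1/2}+\|Y\|\bigr)$, so the troublesome term contributes $M(|\omega|)\,|\omega|^{(1-\alpha)/2}\|Y\|^{1/2}\|X\|^{1/2}$ to $\|(u,v)\|_{\mathcal{H}_0}$, and the Young absorption of this produces $M(|\omega|)^2|\omega|^{1-\alpha}\|Y\|$, not $M(|\omega|)\,|\omega|^{1-\alpha}\|Y\|$. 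Your final line does not follow from your own chain of estimates; the loss of a factor $M$ would degrade the decay exponent in Corollary \ref{cor : polynomiallement stable} from $\frac{1}{1-\alpha+\ell}$ to $\frac{1}{1-\alpha+2\ell}$.

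The paper circumvents both problems by using $\mathcal{A}_0$ in a dual, test-function fashion rather than applying its resolvent to $(u,v)$ directly. Arguing by contradiction with normalized sequences, it solves the auxiliary resolvent problem $-\omega_n^2w_n+Aw_n+i\omega_nBB^*w_n=u_n$ --- whose right-hand side $u_n\in H$ is always an admissible datum for \eqref{NUSFF3} --- obtaining $\omega_n\|w_n\|_H+\|w_n\|_{H_{\frac12}}\leq CM(\omega_n)\|u_n\|_H$ and $\omega_n^2\|B^*w_n\|_U^2\leq CM(\omega_n)\|u_n\|_H^2$, and then pairs the equation for $u_n$ with $\omega_n^2w_n$ to extract $\omega_n^2\|u_n\|_H^2$. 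In that pairing $B$ only ever appears through inner products $\langle B^*u_n,B^*w_n\rangle_U$ with both arguments in $H_{\frac12}$ (resolving the first defect), and the factor $M$ splits as $M^{1/2}\cdot M^{1/2}$ between $\|B^*u_n\|_U=o\bigl(\omega_n^{-1}M^{-1/2}\bigr)$ (from the dissipation, estimate \eqref{NUSFF14}) and $\|B^*w_n\|_U=O\bigl(\omega_n^{-1}M^{1/2}\bigr)$, which is exactly what recovers the sharp single power of $M(|\omega|)$ (resolving the second). You would need to adopt this duality step --- or an equivalent device --- to complete the proof.
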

Since $i\R\subset\rho(\mathcal{A})$, then according to Borichev and Tomilov theorem \cite[Theorem 2.4]{borichevtomilov}, we obtain the following corollary. 
\begin{cor}\label{cor : polynomiallement stable}
We assume  that condition \eqref{NUSFF3} holds with $M(|\omega|)=|\omega|^\ell$ for $\ell\geq0$. Then the semigroup $e^{t\mathcal{A}}$ is polynomially stable, namely there exists a constant $C>0$ such that 
$$
\|\e^{t\mathcal{A}}(u^{0},u^{1},\varphi^{0})\|_{\mathcal{H}}\leq \frac{C}{(1+t)^{\frac{1}{1-\alpha+\ell}}}\|(u^{0},u^{1},\varphi^{0})\|_{\mathcal{D}(\mathcal{A})},\quad\forall\,t\geq 0,
$$
 for every initial data $(u^{0},u^{1},\varphi^{0})\in\mathcal{D}(\mathcal{A})$.
In particular, the energy of the strong solution of \eqref{IFF1}-\eqref{IFF3} satisfy the following estimate
$$
E(t)\leq\frac{C}{(1+t)^{\frac{2}{1-\alpha+\ell}}}\|(u^{0},u^{1},0)\|_{\mathcal{D}(\mathcal{A})}^{2}.
$$
\end{cor}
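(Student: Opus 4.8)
Throughout the proof write the resolvent equation $(i\omega I-\mathcal{A})X=F$ with $F=(f,g,h)\in\mathcal{H}$ and $X=(u,v,\varphi)\in\mathcal{D}(\mathcal{A})$; since $\mathcal{A}$ has real coefficients it suffices to treat $\omega\geq1$, and the hypothesis $i\R\subset\rho(\mathcal{A})$ guarantees that the resolvent is well defined. The third scalar equation gives $\varphi(\xi)=(p(\xi)B^{*}v+h(\xi))/(i\omega+|\xi|^{2}+\eta)$, hence $\gamma B\int_{\R}p(\xi)\varphi(\xi)\,\ud\xi=c(\omega)BB^{*}v+Br$ with the scalar $c(\omega)=\gamma\int_{\R}p(\xi)^{2}/(i\omega+|\xi|^{2}+\eta)\,\ud\xi$ and $r=\gamma\int_{\R}p(\xi)h(\xi)/(i\omega+|\xi|^{2}+\eta)\,\ud\xi\in U$. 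Eliminating $u=(v+f)/(i\omega)$ from the first two equations collapses the system to the single identity
$$
(A-\omega^{2}I)v+i\omega BB^{*}v=\mathcal{G},\qquad \mathcal{G}:=i\omega g-Af-i\omega Br-i\omega(c(\omega)-1)BB^{*}v\ \in H_{-\frac{1}{2}},
$$
which is exactly the reduced resolvent equation of the auxiliary damped operator $\mathcal{A}_{0}$ at the point $i\omega$, perturbed by the terms involving the scalar $c(\omega)$ and the control operator $B$. Lemma \ref{LUSFF8} and Lemma \ref{NUSFF1} give the asymptotics $|c(\omega)|\asymp|\omega|^{\alpha-1}\to0$ and $\re c(\omega)=\gamma\int_{\R}p(\xi)^{2}(|\xi|^{2}+\eta)/((|\xi|^{2}+\eta)^{2}+\omega^{2})\,\ud\xi\asymp|\omega|^{\alpha-1}>0$; the loss $|\omega|^{1-\alpha}$ in the statement is nothing but $|c(\omega)|^{-1}$, the price of this asymptotically weak damping.

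Two a priori bounds on the fractional component come first. Pairing $(i\omega I-\mathcal{A})X=F$ with $X$ in $\mathcal{H}$ and taking real parts gives, as in the proof of Theorem \ref{WFF1}, $\gamma\int_{\R}(|\xi|^{2}+\eta)\|\varphi(\xi)\|_{U}^{2}\,\ud\xi=\re\langle F,X\rangle_{\mathcal{H}}\leq\|F\|_{\mathcal{H}}\|X\|_{\mathcal{H}}$, and since $\eta>0$ this controls $\gamma\|\varphi\|_{V}^{2}\leq\eta^{-1}\|F\|_{\mathcal{H}}\|X\|_{\mathcal{H}}$ (this is the only step where $\eta>0$ is essential, consistently with the fact that $-\mathcal{A}$ fails to be onto when $\eta=0$). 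Next, rewriting \eqref{IFF3} as $p(\xi)B^{*}v=(i\omega+|\xi|^{2}+\eta)\varphi(\xi)-h(\xi)$, squaring, multiplying by $(|\xi|^{2}+\eta)/((|\xi|^{2}+\eta)^{2}+\omega^{2})$ and integrating in $\xi$, the left side equals $\gamma^{-1}\re c(\omega)\,\|B^{*}v\|_{U}^{2}$, which is at least a positive multiple of $|\omega|^{\alpha-1}\|B^{*}v\|_{U}^{2}$, while the right side is bounded by $2\int_{\R}(|\xi|^{2}+\eta)\|\varphi\|_{U}^{2}+|\omega|^{-1}\|h\|_{V}^{2}$; combining with the dissipation estimate yields
$$
\|B^{*}v\|_{U}^{2}\leq C|\omega|^{1-\alpha}\|F\|_{\mathcal{H}}\|X\|_{\mathcal{H}}+C\|F\|_{\mathcal{H}}^{2}.
$$

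The core of the argument is to estimate $\|u\|_{H_{\frac{1}{2}}}+\|v\|_{H}$ using hypothesis \eqref{NUSFF3}. From the bound $\|(i\omega I-\mathcal{A}_{0})^{-1}\|_{\mathcal{L}(\mathcal{H}_{0})}\leq CM(|\omega|)$ (valid for every $\omega\geq1$ by combining \eqref{NUSFF3} with boundedness on compact sets, $M$ being bounded below) one reads off, for any $\psi\in H$, that the solution $\chi\in H_{\frac{1}{2}}$ of $(A-\omega^{2}I+i\omega BB^{*})\chi=\psi$ satisfies $\|\chi\|_{H_{\frac{1}{2}}}\leq CM(|\omega|)\|\psi\|_{H}$ and $\|\chi\|_{H}\leq CM(|\omega|)|\omega|^{-1}\|\psi\|_{H}$, and — taking the imaginary part of $\langle(A-\omega^{2}I+i\omega BB^{*})\chi,\chi\rangle=\langle\psi,\chi\rangle$ — also $\|B^{*}\chi\|_{U}\leq C\sqrt{M(|\omega|)}\,|\omega|^{-1}\|\psi\|_{H}$. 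Testing the reduced equation for $v$ against such a $\chi$ gives $\langle v,\psi\rangle_{H}=\langle\mathcal{G},\chi\rangle$; the terms $i\omega g$ and $Af$ cost a factor $M(|\omega|)$, but the two terms carrying $B$, namely $i\omega Br$ and $i\omega(c(\omega)-1)BB^{*}v$, pair only against $\|B^{*}\chi\|_{U}$ and therefore cost merely $\sqrt{M(|\omega|)}$ — this is precisely why the final bound carries $M(|\omega|)$ and not $M(|\omega|)^{2}$. Taking the supremum over $\|\psi\|_{H}=1$, then recovering $\|v\|_{H_{\frac{1}{2}}}$ from the real part of $\langle\mathcal{G},v\rangle$ (which yields $\|v\|_{H_{\frac{1}{2}}}^{2}\leq2\omega^{2}\|v\|_{H}^{2}+\|\mathcal{G}\|_{H_{-\frac{1}{2}}}^{2}$) and $\|u\|_{H_{\frac{1}{2}}}=|\omega|^{-1}\|v+f\|_{H_{\frac{1}{2}}}$, and using $\|r\|_{U}\lesssim\|F\|_{\mathcal{H}}$ (Lemma \ref{NUSFF1}), one arrives at $\|u\|_{H_{\frac{1}{2}}}+\|v\|_{H}\leq CM(|\omega|)\|F\|_{\mathcal{H}}+C\sqrt{M(|\omega|)}\,\|B^{*}v\|_{U}$. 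Inserting here the bound on $\|B^{*}v\|_{U}$ and adding $\gamma\|\varphi\|_{V}^{2}$, the identity $\|X\|_{\mathcal{H}}^{2}=\|u\|_{H_{\frac{1}{2}}}^{2}+\|v\|_{H}^{2}+\gamma\|\varphi\|_{V}^{2}$ turns into the quadratic inequality $\|X\|_{\mathcal{H}}^{2}\leq CM(|\omega|)^{2}\|F\|_{\mathcal{H}}^{2}+CM(|\omega|)|\omega|^{1-\alpha}\|F\|_{\mathcal{H}}\|X\|_{\mathcal{H}}$, which closes to $\|X\|_{\mathcal{H}}\leq C|\omega|^{1-\alpha}M(|\omega|)\|F\|_{\mathcal{H}}$; the case $\omega\leq-1$ follows by conjugation.

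The main obstacle is this third step: the reduced equation for $(u,v)$ does not belong to the graph of $\mathcal{A}_{0}$, because $\mathcal{G}$ contains the pieces $i\omega Br$ and $i\omega(c(\omega)-1)BB^{*}v$ that live only in $H_{-\frac{1}{2}}$, so one cannot feed it directly into the $\mathcal{H}_{0}$-resolvent bound for $\mathcal{A}_{0}$. The duality device above — testing against $\chi$ and controlling $\|B^{*}\chi\|_{U}$ through the imaginary-part identity — both bypasses this and, crucially, preserves the sharp power of $M$; and obtaining exactly the exponent $|\omega|^{1-\alpha}$ rather than something larger rests on weighting the $\|B^{*}v\|_{U}$ inequality by $(|\xi|^{2}+\eta)$, so as to bring in $\re c(\omega)\asymp|\omega|^{\alpha-1}$ instead of $\int_{\R}p(\xi)^{2}/((|\xi|^{2}+\eta)^{2}+\omega^{2})\,\ud\xi\asymp|\omega|^{\alpha-2}$.
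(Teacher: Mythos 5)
Your argument establishes the resolvent bound $\|(i\omega I-\mathcal{A})^{-1}\|_{\mathcal{L}(\mathcal{H})}\le C|\omega|^{1-\alpha}M(|\omega|)$ directly, whereas the paper isolates this bound as Theorem \ref{th: estimation resolvante} and proves it by contradiction, taking $\omega_n\to\infty$ and unit vectors $(u_n,v_n,\varphi_n)$ with $\omega_n^{1-\alpha}M(\omega_n)(i\omega_nI-\mathcal{A})(u_n,v_n,\varphi_n)\to0$. The substance is closely parallel: your dissipation identity is the paper's \eqref{NUSFF10}; your weighted estimate giving $\|B^{*}v\|_{U}^{2}\lesssim|\omega|^{1-\alpha}\|F\|_{\mathcal{H}}\|X\|_{\mathcal{H}}+\|F\|_{\mathcal{H}}^{2}$ via $\re c(\omega)\asymp|\omega|^{\alpha-1}$ is the analogue of \eqref{NUSFF14}; and your test element $\chi$ solving $(A-\omega^{2}I+i\omega BB^{*})\chi=\psi$, with the imaginary-part trick yielding $\|B^{*}\chi\|_{U}\lesssim\sqrt{M(|\omega|)}\,|\omega|^{-1}\|\psi\|_{H}$, is exactly the paper's auxiliary pair \eqref{NUSFF16} and estimate \eqref{NUSFF27} --- except that the paper fixes the right-hand side $\psi=u_n$ and pairs the reduced equation with $\omega_n^{2}w_n$, while you dualize over arbitrary $\psi\in H$ to bound $\|v\|_{H}$ and then close a quadratic inequality in $\|X\|_{\mathcal{H}}$. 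Your version gives a clean quantitative statement and avoids the contradiction bookkeeping; the two routes rely on the same three ingredients and the same source of the loss $|\omega|^{1-\alpha}$.

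Two points need attention. First, the identity $\langle v,\psi\rangle_{H}=\langle\mathcal{G},\chi\rangle$ drops an adjoint: since $\langle v,i\omega BB^{*}\chi\rangle_{H}=-i\omega\langle B^{*}v,B^{*}\chi\rangle_{U}$, the correct identity is $\langle v,\psi\rangle_{H}=\langle\mathcal{G},\chi\rangle-2i\omega\langle B^{*}v,B^{*}\chi\rangle_{U}$, or else one should let $\chi$ solve the adjoint equation $(A-\omega^{2}I-i\omega BB^{*})\chi=\psi$, whose solution obeys the same bounds by \eqref{NUSFF3} at $-\omega$. The stray term is of size $\sqrt{M(|\omega|)}\,\|B^{*}v\|_{U}\|\psi\|_{H}$, the same order as the terms you already retain, so the final estimate survives, but as written the identity is false. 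Second, and more importantly, your proof stops at the resolvent estimate, while the corollary asserts the time decay $\|\e^{t\mathcal{A}}(u^{0},u^{1},\varphi^{0})\|_{\mathcal{H}}\le C(1+t)^{-1/(1-\alpha+\ell)}\|(u^{0},u^{1},\varphi^{0})\|_{\mathcal{D}(\mathcal{A})}$. The passage from $\|(i\omega I-\mathcal{A})^{-1}\|_{\mathcal{L}(\mathcal{H})}=O(|\omega|^{1-\alpha+\ell})$ together with $i\R\subset\rho(\mathcal{A})$ and the contractivity of $\e^{t\mathcal{A}}$ (Theorem \ref{WFF1}) to this decay rate is exactly the Borichev--Tomilov theorem \cite[Theorem 2.4]{borichevtomilov}; you must invoke it to conclude, which is precisely how the paper deduces the corollary from Theorem \ref{th: estimation resolvante}.
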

From Batty and Duyckaerts \cite{Ba-Du:08}, see also Burq~\cite{Burq:98} for similar result, we obtain the following corollary.
\begin{cor}\label{cor : logarithme stable}
 We assume  the condition \eqref{NUSFF3} holds with $M(|\omega|)=e^{K_{0}|\omega|}$ for some $K_{0}>0$. Then the semigroup $e^{t\mathcal{A}}$ is logarithmically  stable,  there exists a constant $C>0$ such that 
$$
\|\e^{t\mathcal{A}}(u^{0},u^{1},\varphi^{0})\|_{\mathcal{H}}\leq \frac{C}{\ln^{2}(1+t)}\|(u^{0},u^{1},\varphi^{0})\|_{\mathcal{D}(\mathcal{A})},\quad\forall\,t\geq 0,
$$
for every initial data $(u^{0},u^{1},\varphi^{0})\in\mathcal{D}(\mathcal{A})$.
In particular, the energy of the strong solution of \eqref{IFF1}-\eqref{IFF3} satisfy the following estimate
$$
E(t)\leq\frac{C}{\ln^{2}(1+t)}\|(u^{0},u^{1},0)\|_{\mathcal{D}(\mathcal{A})}^{2}.
$$
\end{cor}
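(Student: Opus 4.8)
The plan is to feed the resolvent bound of Theorem~\ref{th: estimation resolvante} into a quantitative Tauberian theorem of Batty--Duyckaerts type \cite{Ba-Du:08} (the same mechanism underlying Burq's estimate \cite{Burq:98}), which converts an exponential growth of the resolvent along the imaginary axis into a logarithmic decay of the semigroup on smooth data. Throughout I use that $e^{t\mathcal{A}}$ is a contraction semigroup by Theorem~\ref{WFF1}, hence bounded, and that the standing hypotheses $i\R\subset\rho(\mathcal{A})$ and $\eta>0$ give in particular $0\in\rho(\mathcal{A})$, so that $\mathcal{A}^{-1}\in\mathcal{L}(\mathcal{H})$.

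First I would specialize Theorem~\ref{th: estimation resolvante} to the weight $M(|\omega|)=e^{K_{0}|\omega|}$, which gives
\begin{equation*}
\|(i\omega I-\mathcal{A})^{-1}\|_{\mathcal{L}(\mathcal{H})}\leq C\,|\omega|^{1-\alpha}e^{K_{0}|\omega|},\qquad\forall\,\omega\geq 1.
\end{equation*}
The next step is to absorb the polynomial prefactor into the exponential: for any fixed $K_{1}>K_{0}$ one has $|\omega|^{1-\alpha}\leq C_{1}e^{(K_{1}-K_{0})|\omega|}$ for all $|\omega|\geq 1$, while on the compact set $\{|\omega|\leq 1\}$ the resolvent is bounded by continuity since $i\R\subset\rho(\mathcal{A})$. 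Hence the resolvent growth function
\begin{equation*}
\mathcal{M}(s):=\sup_{|\omega|\leq s}\|(i\omega I-\mathcal{A})^{-1}\|_{\mathcal{L}(\mathcal{H})}
\end{equation*}
satisfies $\mathcal{M}(s)\leq C_{2}\,e^{K_{1}s}$ for every $s\geq 0$.

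Then I would invoke the Batty--Duyckaerts theorem: for a bounded $C_{0}$-semigroup whose generator satisfies $i\R\subset\rho(\mathcal{A})$ one has
\begin{equation*}
\|e^{t\mathcal{A}}\mathcal{A}^{-1}\|_{\mathcal{L}(\mathcal{H})}\leq \frac{C}{\mathcal{M}_{\log}^{-1}(t/C)},\qquad \mathcal{M}_{\log}(s)=\mathcal{M}(s)\left(\log(1+\mathcal{M}(s))+\log(1+s)\right).
\end{equation*}
With $\mathcal{M}(s)\leq C_{2}e^{K_{1}s}$ one computes $\mathcal{M}_{\log}(s)\leq C_{3}\,s\,e^{K_{1}s}$, whose inverse obeys $\mathcal{M}_{\log}^{-1}(t)\geq c\,\log t$ for $t$ large; substituting yields $\|e^{t\mathcal{A}}\mathcal{A}^{-1}\|_{\mathcal{L}(\mathcal{H})}\leq C/\ln(1+t)$. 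Writing any $X_{0}=(u^{0},u^{1},\varphi^{0})\in\mathcal{D}(\mathcal{A})$ as $X_{0}=\mathcal{A}^{-1}(\mathcal{A}X_{0})$ and using $\|\mathcal{A}X_{0}\|_{\mathcal{H}}\leq\|X_{0}\|_{\mathcal{D}(\mathcal{A})}$, this produces the logarithmic decay
\begin{equation*}
\|e^{t\mathcal{A}}X_{0}\|_{\mathcal{H}}\leq \frac{C}{\ln(1+t)}\,\|X_{0}\|_{\mathcal{D}(\mathcal{A})},\qquad\forall\,t\geq 0,
\end{equation*}
the logarithmic stability of the semigroup, the exact power of the logarithm being the one furnished by the Tauberian theorem. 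Finally, since for the strong solution with datum $(u^{0},u^{1},0)$ one has $E(t)=\frac{1}{2}\|e^{t\mathcal{A}}(u^{0},u^{1},0)\|_{\mathcal{H}}^{2}$, squaring the last display produces the energy estimate $E(t)\leq C\ln^{-2}(1+t)\,\|(u^{0},u^{1},0)\|_{\mathcal{D}(\mathcal{A})}^{2}$, i.e. exactly the $\ln^{2}$ rate claimed.

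The step I expect to be the main obstacle is the correct, fully quantitative use of the Batty--Duyckaerts Tauberian theorem: one must check its hypotheses (boundedness of the semigroup, $i\R\subset\rho(\mathcal{A})$, and the growth bound on $\mathcal{M}$) and, above all, invert $\mathcal{M}_{\log}$ precisely so as to read off the exact power of the logarithm. The polynomial factor $|\omega|^{1-\alpha}$ coming from Theorem~\ref{th: estimation resolvante} is harmless at this scale, being dominated by the exponential weight, so that the decay rate is governed solely by the constant $K_{0}$.
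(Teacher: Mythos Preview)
Your approach is essentially the paper's own: the paper deduces this corollary directly from Theorem~\ref{th: estimation resolvante} by invoking Batty--Duyckaerts \cite{Ba-Du:08} (and Burq \cite{Burq:98}), and you have simply spelled out that invocation with the appropriate absorption of the polynomial prefactor into the exponential. Note that your computation gives $\|e^{t\mathcal{A}}X_0\|_{\mathcal{H}}\leq C\ln^{-1}(1+t)\|X_0\|_{\mathcal{D}(\mathcal{A})}$ rather than the $\ln^{-2}$ stated in the first display of the corollary; this is consistent with the applications in Section~\ref{AFF}, where the paper itself writes $\ln^{-1}$ for the norm and $\ln^{-2}$ for the energy, so the $\ln^{-2}$ in the first display appears to be a typo.
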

\begin{proof}
We need just to prove that
\begin{equation}\label{NUSFF4}
\limsup_{\omega\in\R, |\omega| \rightarrow + \infty} | \omega| ^{\alpha-1} M(|\omega|)^{-1}\|(i\omega I-\mathcal{A})^{-1}\|_{\mathcal{L}(\mathcal{H})}<+\infty
\end{equation}
is satisfied. For this purpose, we will use an argument of contradiction. We suppose that \eqref{NUSFF4} is false, then there exist a real sequence $(\omega_{n})$, with $\omega_{n}\longrightarrow+\infty$ and a sequence $(u_{n},v_{n},\varphi_{n})\in\mathcal{D}(\mathcal{A})$, verifying the following condition
\begin{equation}\label{NUSFF5}
\|(u_{n},v_{n},\varphi_{n})\|_{\mathcal{H}}=1
\end{equation}
and
\begin{equation}\label{NUSFF6}
\omega_n^{1-\alpha}M(\omega_n)(i\omega_{n}I-\mathcal{A})\left(\begin{array}{c}
u_{n}
\\
v_{n}
\\
\varphi_{n}
\end{array}\right)=\left(\begin{array}{c}
f_{n}
\\
g_{n}
\\
h_{n}
\end{array}\right)\longrightarrow 0\text{ in }\mathcal{H}.
\end{equation}
Multiplying \eqref{NUSFF6} by $ \left(\begin{array}{c}
f_{n}
\\
g_{n}
\\
h_{n}
\end{array}\right)$ and taking the real part of the inner product, we obtain
\begin{equation}\label{NUSFF10}
\re\left\langle\left(\begin{array}{c}
f_{n}
\\
g_{n}
\\
h_{n}
\end{array}\right),\left(\begin{array}{c}
u_{n}
\\
v_{n}
\\
\varphi_{n}
\end{array}\right)\right\rangle_{\mathcal{H}}= \omega_n^{1-\alpha}M(\omega_n)   
\gamma\int_{\R}(|\xi|^{2}+\eta)\|\varphi_{n}(\xi)\|_{U}^{2}\,\ud\xi\underset{n\to+\infty}{\longrightarrow}0.
\end{equation}
Detailing equation \eqref{NUSFF6}, we get
\begin{eqnarray}
 \omega_n^{1-\alpha}M(\omega_n)   (i\omega_{n}u_{n}-v_{n})=f_{n}\longrightarrow 0\text{ in }H_{\frac{1}{2}},\label{NUSFF7}
\\
 \omega_n^{1-\alpha}M(\omega_n)    \left(i\omega_{n}v_{n}+Au_{n}+\gamma B\int_{\R}p(\xi)\varphi_{n}(\xi)\,\ud\xi\right)=g_{n}\longrightarrow 0\text{ in }H,\label{NUSFF8}
\\
 \omega_n^{1-\alpha}M(\omega_n)   (i\omega_{n}\varphi_{n}+(|\xi|^{2}+\eta)\varphi_{n}-p(\xi)B^{*}v_{n})=h_{n}\longrightarrow 0 \text{ in } V.\label{NUSFF9}
\end{eqnarray}
We draw immediately from \eqref{NUSFF7} that 
\begin{equation}\label{NUSFF13}
\omega_{n}\|u_{n}\|_{H}=O(1).
\end{equation}
Taking the inner product of \eqref{NUSFF8} with $u_{n}$ in $H$ and using \eqref{NUSFF7}, one has
\begin{equation*}
\begin{split}
\|u_n\|_{H_{\frac{1}{2}}}^{2}-\omega_{n}^{2}\|u_n\|_{H}^{2}&=-\gamma\left\langle\int_{\R}p(\xi)\varphi_{n}(\xi)\,\ud\xi,B^{*}u_{n}\right\rangle_{U}
\\
&+\omega_n^{\alpha-1}M(\omega_n)^{-1} \left(\langle g_{n},u_{n}\rangle_{H}+i\omega_{n}\langle f_{n},u_{n}\rangle_{H}\right).
\end{split}
\end{equation*}
Using Cauchy-Schwarz inequality, we obtain
\begin{equation*}
\begin{split}
|\|u_{n}\|_{H_{\frac{1}{2}}}^{2}-\omega_{n}^{2}\|u_{n}\|_{H}^{2}|\leq  \omega_n^{\alpha-1}M(\omega_n)^{-1} \|u_{n}\|_{H}(|\omega_{n}|.\|f_{n}\|_{H}+\|g_{n}\|_{H})
\\
+\gamma\|B^{*}u_{n}\|_{U}\left(\int_{\R}\frac{p(\xi)^{2}}{|\xi|^{2}+\eta}\,\ud\xi\right)^{\frac{1}{2}}\left(\int_{\R}(|\xi|^{2}+\eta)\|\varphi_{n}(\xi)\|_{U}^{2}\,\ud\xi\right)^{\frac{1}{2}}.
\end{split}
\end{equation*}
Then \eqref{NUSFF5}-\eqref{NUSFF10} and \eqref{NUSFF13} leads to
\begin{equation}\label{NUSFF15}
\|u_{n}\|_{H_{\frac{1}{2}}}-\omega_{n}\|u_{n}\|_{H}\underset{n\to+\infty}{\longrightarrow}0.
\end{equation}

Following to \eqref{NUSFF7} equations \eqref{NUSFF8} and \eqref{NUSFF9} can be recast as follow
\begin{equation}\label{NUSFF12}
\begin{split}
\varphi_{n}(\xi)&=i\omega_{n}\frac{p(\xi)}{|\xi|^{2}+\eta+i\omega_{n}}B^{*}u_{n}-\omega_n^{\alpha-1}M(\omega_n)^{-1}\frac{p(\xi)}{|\xi|^{2}+\eta+i\omega_{n}}B^{*}f_{n}
\\
&+\omega_n^{\alpha-1}M(\omega_n)^{-1}\frac{h_{n}(\xi)}{|\xi|^{2}+\eta+i\omega_{n}}.
\end{split}
\end{equation}
and
\begin{equation}\label{NUSFF11}
\begin{split}  
-\omega_{n}^{2}u_{n}+Au_{n}+i\omega_{n}\gamma\int_{\R}\frac{p(\xi)^{2}}{|\xi|^{2}+\eta+i\omega_{n}}\,\ud\xi BB^{*}u_{n}
= \omega_n^{\alpha-1}M(\omega_n)^{-1}  (g_{n}+ i\omega_{n}f_{n})
\\
+\gamma
\omega_n^{\alpha-1}M(\omega_n)^{-1} 
\int_{\R}\frac{p(\xi)^{2}}{|\xi|^{2}+\eta+i\omega_{n}}\,\ud\xi BB^{*}f_{n}
-  \omega_n^{\alpha-1}M(\omega_n)^{-1} \gamma
\int_{\R}\frac{p(\xi)Bh_{n}(\xi)}{|\xi|^{2}+\eta+i\omega_{n}}\,\ud\xi
\end{split}
\end{equation}
Multiplying \eqref{NUSFF12} by $|\xi|^{(2-d)/2}$ then integrating over $\R$ with respect to the $\xi$ variable and using Cauchy-Schwarz inequality, we obtain
\begin{align*}
\omega_{n} \left|\int_{\R}\frac{  |\xi|^{ \alpha+1-d}}{|\xi|^{2}+\eta+i\omega_{n}}\,\ud\xi\right|\|B^{*}u_{n}\|_{U}
& \leq  
 \omega_n^{\alpha-1}M(\omega_n)^{-1} 
\left(\int_{\R}\frac{ |\xi|^{ 2-d}  }{(|\xi|^{2}+\eta)^{2}+
\omega_{n}^{2}}\,\ud\xi\right)^{\frac{1}{2}}\|h_{n}\|_{V}
\\  
&\quad +  \omega_n^{\alpha-1}M(\omega_n)^{-1} 
\left|\int_{\R}\frac{    |\xi|^{ \alpha+1-d  }}{|\xi|^{2}+\eta+i\omega_{n}}\,\ud\xi\right|\|B^{*}f_{n}\|_{U}
\\  
& \quad +\left(\int_{\R}\frac{    |\xi|^{2-d}   }{|\xi|^{2}+\eta}\,\ud\xi\right)^{\frac{1}{2}}\left(\int_{\R}(|\xi|^{2}+\eta)\|\varphi_{n}(\xi)\|_{U}^{2}\,\ud\xi\right)^{\frac{1}{2}}.
\end{align*}
Using Lemma \ref{LUSFF8} and Lemma \ref{NUSFF1} we follow
\begin{equation*}
\begin{split}
\omega_{n} (\omega_{n}+\eta)^{(\alpha-1)/2}\|B^{*}u_{n}\|_{U}
\leq C\Bigg(
  \left(\int_{\R}\frac{   |\xi| ^{2-d}  }{|\xi|^{2}+\eta}\,\ud\xi\right)^{\frac{1}{2}}\left(\int_{\R}(|\xi|^{2}+\eta)\|\varphi_{n}(\xi)\|_{U}^{2}\,\ud\xi\right)^{\frac{1}{2}}
\\
+ \omega_n^{\alpha-1}M(\omega_n)^{-1}  (\omega_{n}+\eta)^{-\frac{1}{2}}\|h_{n}\|_{V}
+ \omega_n^{\alpha-1}M(\omega_n)^{-1}  (\omega_{n}+\eta)^{(\alpha-1)/2}\|B^{*}f_{n}\|_{U}\Bigg),
\end{split}
\end{equation*}
which imply from \eqref{NUSFF10} that
\begin{equation}\label{NUSFF14}
\omega_n^2 M(\omega_n) \|B^{*}u_{n}\|_{U}^2 \underset{n\to+\infty}{\longrightarrow}0.
\end{equation}   

Now we recall that the semigroup generated by the operator $\mathcal{A}_{0}$ is stable (in the sense of condition \rfb{NUSFF3}) in the Hilbert space $\mathcal{H}_{0}$ then there exist a unique couple $(w_{n},z_{n})\in\mathcal{D}(\mathcal{A}_{0})$ such that
\begin{equation}\label{NUSFF16}
\left\{\begin{array}{l}
-\omega_{n}^{2}w_{n}+Aw_{n}+i\omega_{n}BB^{*}w_{n}=u_{n}
\\
z_{n}=i\omega_{n}w_{n}
\end{array}\right.
\end{equation}
satisfying the following estimate
\begin{equation}\label{NUSFF17}
\omega_{n}\|w_{n}\|_{H}+\|w_{n}\|_{H_{\frac{1}{2}}}\leq CM(\omega_{n}) \|u_{n}\|_{H},
\end{equation}
since the resolvent of $\mathcal{A}_{0}$ satisfies condition \rfb{NUSFF3}. Next, we take the inner product in $H$ of the first line of \eqref{NUSFF16} with $\omega_{n}w_{n}$, one gets
\begin{equation}\label{NUSFF18}
-\omega_{n}\|\omega_{n}w_{n}\|_{H}^{2}+\omega_{n}\|w_{n}\|_{H_{\frac{1}{2}}}^{2}+i\omega_{n}^{2}\|B^{*}w_{n}\|_{U}^{2}=\omega_{n}\langle u_{n},w_{n}\rangle_{H}.
\end{equation}
Taking the imaginary part of \eqref{NUSFF18}, using Cauchy-Schwarz inequality, and \eqref{NUSFF17}, one gets 
\begin{equation}\label{NUSFF27}
\omega_{n}^{2}\|B^{*}w_{n}\|_{U}^{2}\leq \omega_{n} \| u_n \|_H \|w_n \|_H
 \leq C M(\omega_n ) \| u_n \|_H^2 
\end{equation}
Taking the inner product of \eqref{NUSFF11} with $\omega_{n}^{2}w_{n}$ in the Hilbert space $H$, we have
\begin{align}\label{NUSFF20}
\omega_{n}^{2}\|u_{n}\|_{H}^{2}&=
 \omega_n^{\alpha+1}M(\omega_n)^{-1}  \langle g_{n},w_{n}\rangle_{H}
+i \omega_n^{\alpha+2}M(\omega_n)^{-1}  \langle f_{n},w_{n}\rangle_{H}
\\
&\quad -i\omega_{n}^{3}\gamma\int_{\R}\frac{p(\xi)^{2}}{|\xi|^{2}+\eta+i\omega_{n}}\,\ud\xi\langle B^{*}u_{n},B^{*}w_{n}\rangle_{U}+i\omega_{n}^{3}\langle B^{*}u_{n},B^{*}w_{n}\rangle_{U}     \notag
\\
&\quad
+\omega_n^{\alpha+1}M(\omega_n)^{-1} 
\gamma\int_{\R}\frac{p(\xi)^{2}}{|\xi|^{2}+\eta
+i\omega_{n}}\,\ud\xi
\langle B^{*}f_{n},B^{*}w_{n}\rangle_{U}   \notag
\\
&\quad
-  \omega_n^{\alpha+1}M(\omega_n)^{-1}  \gamma   \int_{\R}p(\xi)\frac{\langle h_{n}(\xi),B^{*}w_{n}\rangle_{U}}{|\xi|^{2}
+\eta+i\omega_{n}}\,\ud\xi.  \notag
\end{align}
Using Lemma \ref{LUSFF8}, Lemma \ref{NUSFF1} and estimates \eqref{NUSFF5}, \eqref{NUSFF6}, \eqref{NUSFF13}, \eqref{NUSFF14}, \eqref{NUSFF17} and \eqref{NUSFF27}, we obtain
\begin{align}\label{NUSFF31}
|\omega_{n}^{3}\langle B^{*}u_{n},B^{*}w_{n}\rangle_{U}|
&\leq \omega_{n}^{3}\|B^{*}u_{n}\|_{U}  \|B^*w_{n}\|_{U}
\leq C\omega_{n}^{2}   M(\omega_n)^{1/2}\|B^{*}u_{n}\|_{U}  \|u_{n}\|_{H}
\\
&
\leq C\omega_{n}    M(\omega_n)^{1/2}\|B^{*}u_{n}\|_{U}  . \omega_n\|u_{n}\|_{H}
\underset{n\to+\infty}{\longrightarrow}0,  \notag
\end{align}
\begin{align}\label{NUSFF21}
\left|\omega_{n}^{3}\int_{\R}\frac{p(\xi)^{2}}{|\xi|^{2}+\eta+i\omega_{n}}\,\ud\xi\langle B^{*}u_{n},B^{*}w_{n}\rangle_{U}\right|
&\leq C\omega_{n}^{2+\alpha} \|B^{*}u_{n}\|_{U}\|B^*w_{n}\|_{U}   
\\
&\leq C\omega_{n}^{\alpha}   M(\omega_n)^{1/2}  \|B^{*}u_{n}\|_{U}.\omega_{n}\|u_{n}\|_{H}
%
%
\underset{n\to+\infty}{\longrightarrow}0,  \notag
\end{align}
\begin{align}\label{NUSFF22}
& \omega_n^{\alpha+1}M(\omega_n)^{-1} \left|  \int_{\R}\frac{p(\xi)^{2}}{|\xi|^{2}+\eta+i\omega_{n}}\,\ud\xi 
\langle B^{*}f_{n},B^{*}w_{n}\rangle_{U}\right| \\
&\qquad    \qquad \qquad \qquad  \qquad \qquad 
\leq C \omega_n^{2\alpha-2}M(\omega_n)^{-1/2}   \|f_{n}\|_{H_{\frac{1}{2}}}.
\omega_{n}\|u_{n}\|_{H}\underset{n\to+\infty}{\longrightarrow}0,\notag
\end{align}  
\begin{align}\label{NUSFF23}
 \omega_n^{\alpha+1}M(\omega_n)^{-1} &  \left|   \int_{\R}p(\xi)\frac{\langle h_{n}(\xi),B^{*}w_{n}\rangle_{U}}{|\xi|^{2}
+\eta+i\omega_{n}}\,\ud\xi \right| 
\\
&\leq   
 \omega_n^{\alpha-1}M(\omega_n)^{-1}    \left(\int_{\R}\frac{p(\xi)^{2}}{(|\xi|^{2}+\eta)^{2}+\omega_{n}^{2}}\,
\ud\xi\right)^{\frac{1}{2}}\|h_{n}\|_{V}.\omega_{n}\|u_{n}\|_{H}   \notag
\\
&\leq C\omega_n^{\alpha-1 } \omega_{n}^{\alpha/2-1}  M(\omega_n)^{-1}     \|h_{n}\|_{V}.\omega_{n}\|u_{n}\|_{H}\underset{n\to+\infty}{\longrightarrow}0,
\notag
\end{align}      
and
\begin{equation}\label{NUSFF24}
 \omega_n^{\alpha+1}M(\omega_n)^{-1} |\langle g_{n},w_{n}\rangle_{H}|
\leq C\omega_n^{\alpha-1} \|g_{n}\|_{H}.\omega_{n}\|u_{n}\|_{H}\underset{n\to+\infty}{\longrightarrow}0.
\end{equation}  
Taking the inner product of the first equation of \eqref{NUSFF16} with $f_{n}$, we obtain
\begin{equation*}
-\omega_{n}^{2}\langle w_{n},f_{n}\rangle_{H}+\langle A^{\frac{1}{2}}w_{n},A^{\frac{1}{2}}f_{n}\rangle_{H}+i\omega_{n}\langle B^{*}w_{n},B^{*}f_{n}\rangle_{U}=\langle u_{n},f_{n}\rangle_{H}.
\end{equation*}
This with \eqref{NUSFF5}, \eqref{NUSFF6}, \eqref{NUSFF13}, \eqref{NUSFF17} and \eqref{NUSFF27} give
\begin{align}\label{NUSFF25}
 \omega_n^{\alpha+2}&M(\omega_n)^{-1} |\langle w_{n},f_{n}\rangle_{H}|
 \\
&\leq    \omega_n^{\alpha}M(\omega_n)^{-1}     (\|w_{n}\|_{H_{\frac{1}{2}}}\|f_{n}\|_{H_{\frac{1}{2}}}+\|u_{n}\|_{H}\|f_{n}\|_{H})  \notag
%
+\omega_n^{\alpha+1}M(\omega_n)^{-1}    \|B^{*}w_{n}\|_{U}\|B^{*}f_{n}\|_{U}
\\
&\leq C  \omega_n^{\alpha-1}  (1+ M ( \omega_{n}) ^{-1}  +M ( \omega_{n}) ^{-\frac{1}{2}}) \|f_{n}\|_{H_{\frac{1}{2}}}.\omega_{n}\|u_{n}\|_{H}\underset{n\to+\infty}{\longrightarrow}0.   \notag
\end{align}    
It follows from the combination of \eqref{NUSFF20} and \eqref{NUSFF31}-\eqref{NUSFF25} that $\|\omega_{n}u_{n}\|_{H}\underset{n\to+\infty}{\longrightarrow}0$. Thus, by \eqref{NUSFF15} we have $\|u_{n}\|_{H_{\frac{1}{2}}}\underset{n\to+\infty}{\longrightarrow}0$. Together with \eqref{NUSFF7} and \eqref{NUSFF10} imply that $(u_{n},v_{n},\varphi_{n})\underset{n\to+\infty}{\longrightarrow}0$ which contradicts \eqref{NUSFF5}. This completes the proof.
\end{proof}

\begin{rem}

In the case where for all $\delta >0, \ds \sup_{Re \lambda = \delta} \left\|\lambda B^* (\lambda^2 I + A)^{-1} B\right\|_{\mathcal{L}(U)} <\infty,$ according to \cite{AT} (see also \cite{ammariniciase}), we can replace the hypothesis \rfb{NUSFF3} by the following observability inequalities and we obtain the same results:

\begin{itemize}
\item
for $\ell =0$, the assumption \rfb{NUSFF3} is equivalent to the following exact observability inequality: there exists $T, C > 0$ such that 
$$
\int_0^T \left\|(0 \,\, B^{*}) e^{t \left(\begin{array}{rc}
0&I
\\
-A&0
\end{array}\right)} \left( \begin{array}{l}
u^{0} \\
u^{1}
\end{array}
\right) \right\|^2_U \, \ud t \geq C \, \left\|(u^0,u^1)\right\|^2_{\mathcal {H}_0}, \, \forall \, (u^{0},u^{1}) \in H_{1} \times H,
$$
and
\item 
for $\ell > 0$, the assumption \rfb{NUSFF3} is an implication for the following weak observability inequality: there exists $T,\,C > 0$ such that 

$$
\int_0^T \left\|(0 \,\, B^{*}) e^{t \left(\begin{array}{rc}
0&I
\\
-A&0
\end{array}\right)} \left( \begin{array}{l}
u^{0} \\
u^{1}
\end{array}
\right) \right\|^2_U \, \ud t \geq C \, \left\|(u^0,u^1)\right\|^2_{H_{- \frac{- \alpha +\ell}{2}} \times H_{- \frac{1 - \alpha+\ell}{2}}}, \, \forall \, (u^{0},u^{1}) \in H_{1}\times H.
$$
\end{itemize}
\end{rem}
\section{Applications to the fractional-damped wave equation}\label{AFF}
\subsection{Internal fractional-damped wave equation}
We consider a wave equation with an internal fractional-damping in a bounded and connected domain $\Omega$ of $\R^{n}$ with smooth boundary $\Gamma=\partial\Omega$
\begin{equation}\label{AFF1}
\left\{\begin{array}{ll}
\partial_{t}^{2}u(x,t)-\Delta u(x,t)+a(x)\partial^{\alpha,\eta}_{t}u(x,t)=0&\text{in }\Omega\times \R_{+}
\\
u(x,t)=0&\text{on }\Gamma\times \R_{+}
\\
u(x,0)=u^{0}(x),\quad\partial_{t}u(x,0)=u^{1}(x)&\text{in }\Omega,
\end{array}\right.
\end{equation}
where $a(x)$ is a positive function in $\Omega$ with support $\omega_{0}=\mathrm{supp}(a)$ verifying that there exist a non empty open subset $\widetilde{\omega}_{0}\subset\Omega$ and a strictly positive constant $a_{0}$ such that
$$
a(x)\geq a_{0} \qquad\forall\,x\in\widetilde{\omega}_{0}.
$$
System \eqref{AFF1} can be recast as follow
\begin{equation}\label{AFF2}
\left\{\begin{array}{ll}
\ds\partial_{t}^{2}u(x,t)-\Delta u(x,t)+\gamma\sqrt{a(x)}\int_{\R}p(\xi)\varphi(x,t,\xi)\,\ud\xi=0&(x,t)\in\Omega\times \R_{+}
\\
\partial_{t}\varphi(x,t,\xi)+(|\xi|^2+\eta)\,\varphi(x,t,\xi)=p(\xi)\sqrt{a(x)}\partial_{t}u(x,t)&(x,t,\xi)\in\Omega\times\R_{+}\times\R
\\
u(x,t)=0&(x,t)\in\Gamma\times\R_{+}
\\
u(x,0)=u^{0}(x),\quad\partial_{t}u(x,0)=u^{1}(x),\quad\varphi(x,0,\xi)=0&x\in\Omega,\;\xi\in\R.
\end{array}\right.
\end{equation}
The energy of the system is given by
$$
E(t)=\frac{1}{2}\left(\|\partial_{t}u(t)\|_{L^{2}(\Omega)}^{2}+\|\nabla u(t)\|_{L^{2}(\Omega)}^{2}+\gamma\int_{\R}\|\varphi(t,\xi)\|_{L^{2}(\Omega)}^{2}\,\ud\xi\right).
$$
The operator $A=-\Delta$ is strictly positive and self-adjoint operator in $H=L^{2}(\Omega)$ and with domain $\mathcal{D}(A)=H_{0}^{1}(\Omega)\cap H^{2}(\Omega)$. The operator $\mathcal{A}$ corresponding to the Cauchy problem of system \eqref{AFF2} is given by
$$
\mathcal{A}\left(\begin{array}{c}
u
\\
v
\\
\varphi
\end{array}\right)=\left(\begin{array}{c}
v
\\
\ds\Delta u-\gamma\sqrt{a}\int_{\R}p(\xi)\varphi(\xi)\,\ud\xi
\\
-(|\xi|^2+\eta)\varphi(\xi)+p(\xi)\sqrt{a}v
\end{array}\right)
$$
with domain in the Hilbert space $\mathcal{H}=H_{0}^{1}(\Omega)\times L^{2}(\Omega)\times L^{2}(\R;L^{2}(\Omega))$ given by
\begin{equation*}
\begin{split}
\mathcal{D}(\mathcal{A})=\Big\{(u,v,\varphi)\in\mathcal{H}:\,v\in H_{0}^{1}(\Omega),\,\ds\Delta u-\gamma\sqrt{a}\int_{\R}p(\xi)\varphi(\xi)\,\ud\xi\in L^{2}(\Omega),
\\
|\xi|\varphi\in L^{2}(\R;L^{2}(\Omega)),\,(|\xi|^2+\eta)\varphi(\xi)-p(\xi)\sqrt{a}v\in L^{2}(\R;L^{2}(\Omega))\Big\}.
\end{split}
\end{equation*}
Since the embedding $H_{0}^{1}(\Omega)\hookrightarrow L^{2}(\Omega)$ is compact and the only solution of the following problem
\begin{equation*}
\left\{\begin{array}{ll}
\partial_{t}^{2}u(x,t)-\Delta u(x,t)=0&(x,t)\in\Omega\times\R_{+}
\\
\sqrt{a(x)}\partial_{t}u(x,t)=0&(x,t)\in\Omega\times\R_{+}
\\
u(x,t)=0&(x,t)\in\Gamma\times\R_{+},
\end{array}\right.
\end{equation*}
is the trivial solution (see proof of Proposition \ref{AFF19}), then according to section \ref{SSFF} the semigroup $e^{t\mathcal{A}}$ is strongly stable. Moreover, we have the following lemma (for proof look at those of Lemma \ref{AFF9} and Lemma \ref{AFF21}).
\begin{lem}\label{AFF10}
Let $\eta>0$ and for all $\omega\in\R$ the operator $(i\omega I-\mathcal{A})$ is injective and surjective.
\end{lem}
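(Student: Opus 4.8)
I would prove injectivity and surjectivity separately, splitting each into the cases $\omega=0$ and $\omega\neq0$. The only place where the specific wave structure matters is through the compactness of $H^{1}_{0}(\Omega)\hookrightarrow L^{2}(\Omega)$ and the unique continuation principle for $-\Delta$ on the connected open set $\Omega$, exactly as used in the proof of Proposition~\ref{AFF19}.

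\textbf{Injectivity.} Let $X=(u,v,\varphi)\in\mathcal{D}(\mathcal{A})$ with $(i\omega I-\mathcal{A})X=0$. Taking the real part of the inner product with $X$ and using $\re\langle\mathcal{A}X,X\rangle_{\mathcal{H}}=-\gamma\int_{\R}(|\xi|^{2}+\eta)\|\varphi(\xi)\|_{L^{2}(\Omega)}^{2}\,\ud\xi$, the hypothesis $\eta>0$ forces $\varphi\equiv0$. The remaining equations then read $v=i\omega u$, $Au=\omega^{2}u$, and $p(\xi)\sqrt{a}\,v=0$; since $p(\xi)\neq0$ for $\xi\neq0$ this gives $\sqrt{a}\,v=0$. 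If $\omega=0$, then $v=0$ and $Au=0$, so $u=0$ by injectivity of $A$. If $\omega\neq0$, then $-\Delta u=\omega^{2}u$ with $u\in H^{1}_{0}(\Omega)$ and $u$ vanishes on the nonempty open set $\widetilde{\omega}_{0}$; unique continuation on the connected set $\Omega$ yields $u\equiv0$. In both cases $X=0$.

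\textbf{Surjectivity.} Given $(f,g,h)\in\mathcal{H}$, solving $(i\omega I-\mathcal{A})(u,v,\varphi)=(f,g,h)$ is equivalent to $v=i\omega u-f$, $\varphi(\xi)=(p(\xi)\sqrt{a}\,v+h(\xi))/(|\xi|^{2}+\eta+i\omega)$, together with the reduced equation
\begin{equation*}
\bigl(A-\omega^{2}I+i\omega\gamma\kappa(\omega)a\bigr)u=g+i\omega f+\gamma\kappa(\omega)\,a f-\gamma\sqrt{a}\int_{\R}\frac{p(\xi)h(\xi)}{|\xi|^{2}+\eta+i\omega}\,\ud\xi=:G,
\end{equation*}
where $\kappa(\omega)=\int_{\R}p(\xi)^{2}/(|\xi|^{2}+\eta+i\omega)\,\ud\xi$ is finite because $0<\alpha<1$, and satisfies $\re\kappa(\omega)>0$ and $\omega\,\im\kappa(\omega)\leq0$ (both visible directly on the integrand, and computed in Lemma~\ref{LUSFF8}). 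Cauchy--Schwarz bounds on the $\xi$-integrals, using $\eta>0$ and entirely parallel to the proof of Theorem~\ref{WFF1}, show that $G\in L^{2}(\Omega)\subset H^{-1}(\Omega)$ and that, once $u\in H^{1}_{0}(\Omega)$ solving the reduced equation is produced, the triple $(u,v,\varphi)$ belongs to $\mathcal{D}(\mathcal{A})$. If $\omega=0$ the reduced equation is $Au=G$, solvable since $A:H^{1}_{0}(\Omega)\to H^{-1}(\Omega)$ is an isomorphism. If $\omega\neq0$, the bounded sesquilinear form $(u,\psi)\mapsto\langle\nabla u,\nabla\psi\rangle_{L^{2}(\Omega)}+i\omega\gamma\kappa(\omega)\langle\sqrt{a}\,u,\sqrt{a}\,\psi\rangle_{L^{2}(\Omega)}$ is coercive on $H^{1}_{0}(\Omega)$ (its real part dominates $\|\nabla u\|_{L^{2}(\Omega)}^{2}$ since $-\omega\,\im\kappa(\omega)\geq0$), so by Lax--Milgram $A+i\omega\gamma\kappa(\omega)a$ is an isomorphism of $H^{1}_{0}(\Omega)$ onto $H^{-1}(\Omega)$; writing
\begin{equation*}
A-\omega^{2}I+i\omega\gamma\kappa(\omega)a=\bigl(A+i\omega\gamma\kappa(\omega)a\bigr)\bigl(I-\omega^{2}(A+i\omega\gamma\kappa(\omega)a)^{-1}\bigr)
\end{equation*}
and using the compactness of $H^{1}_{0}(\Omega)\hookrightarrow L^{2}(\Omega)$, the operator $\omega^{2}(A+i\omega\gamma\kappa(\omega)a)^{-1}$ is compact on $H^{1}_{0}(\Omega)$, so the Fredholm alternative reduces solvability to injectivity of $A-\omega^{2}I+i\omega\gamma\kappa(\omega)a$. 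If this operator annihilates $u$, pairing with $u$ and taking imaginary parts gives $\omega\gamma\,\re\kappa(\omega)\,\|\sqrt{a}\,u\|_{L^{2}(\Omega)}^{2}=0$, hence $\sqrt{a}\,u=0$ and then $-\Delta u=\omega^{2}u$; unique continuation again forces $u=0$. Therefore $(i\omega I-\mathcal{A})$ is onto.

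\textbf{Expected main obstacle.} Everything except two structural inputs is routine bookkeeping of $\xi$-integrals of the same nature as in Theorem~\ref{WFF1}. The first nontrivial input is the unique continuation property for $-\Delta$, which is precisely what makes the overdetermined Helmholtz problem $-\Delta u=\omega^{2}u$, $\sqrt{a}\,u=0$ admit only the trivial solution; the second is arranging the Fredholm alternative for the non-self-adjoint operator $A-\omega^{2}I+i\omega\gamma\kappa(\omega)a$, which needs the sign information $\re\kappa(\omega)>0$ and $\omega\,\im\kappa(\omega)\leq0$ together with the compact embedding. I expect the careful verification that the reconstructed $\varphi$ and the datum $G$ have the right integrability (so that $(u,v,\varphi)\in\mathcal{D}(\mathcal{A})$) to be the most tedious, though not conceptually hard, part.
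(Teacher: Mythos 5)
Your proposal is correct and follows essentially the same route as the paper, which does not write out Lemma~\ref{AFF10} separately but points to the proofs of Lemma~\ref{AFF9} (dissipativity forces $\varphi\equiv0$, then unique continuation for the overdetermined Helmholtz problem) and Lemmas~\ref{AFF7}--\ref{AFF21} (elimination of $v,\varphi$, Lax--Milgram for the coercive part, Fredholm alternative for the $-\omega^{2}I$ perturbation via the compact embedding, and unique continuation again for injectivity of the reduced operator); your $\kappa(\omega)$ is exactly the paper's $c_{2}-i\omega c_{1}$ and your sign observations $\re\kappa>0$, $\omega\,\im\kappa\leq0$ are what make the paper's bilinear form coercive. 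The only (correct) simplification you exploit is that for the internal damping the overdetermination $\sqrt{a}\,v=0$ bears on $u$ itself rather than on $\nabla u$, so the differentiation step of Lemma~\ref{AFF9} is unnecessary.
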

We assume that the semigroup of the operator $\mathcal{A}_{0}:\mathcal{D}(\mathcal{A})\subset\mathcal{H}_{0}\longrightarrow\mathcal{H}_{0}$ defined by
$$
\mathcal{A}_{0}\left(\begin{array}{c}
u
\\
v
\end{array}\right)=\left(\begin{array}{c}
v
\\
\Delta u-av
\end{array}\right)
$$
where $\mathcal{H}_{0}=H_{0}^{1}(\Omega)\times L^{2}(\Omega)$ with domain
$$
\mathcal{D}(\mathcal{A}_{0})=\{(u,v)\in\mathcal{H}_{0}:\; \Delta u-av\in L^{2}(\Omega),\;v\in H_{0}^{1}(\Omega)\},
$$
is uniformly stable in the energy space $\mathcal{H}_{0}$, which means that the energy of the following system
\begin{equation*}
\left\{\begin{array}{ll}
\partial_{t}^{2}w(x,t)-\Delta w(x,t)+ a(x)\partial_{t}w(x,t)=0&\text{in }\Omega\times \R_{+}
\\
w(x,t)=0&\text{on }\Gamma\times \R_{+}
\\
w(x,0)=w^{0}(x),\quad\partial_{t}w(x,0)=w^{1}(x)&\text{in }\Omega.
\end{array}\right.
\end{equation*}
is exponentially stable. Noting that this can be held if the so called geometric control condition (GCC) is satisfied (see \cite{blr}).
\begin{prop}
Under the above assumption and for $\eta>0$ the operator $\mathcal{A}$ generates a  contraction semigroup satisfying
$$
\|e^{t\mathcal{A}}X\|_{\mathcal{H}}\leq\frac{C}{(1+t)^{\frac{1}{1-\alpha}}}\|X\|_{\mathcal{D}(\mathcal{A})},\quad\forall\,X\in\mathcal{D}(\mathcal{A}),\;t\geq 0,
$$
for some constant $C>0$. This means that the energy of system \eqref{AFF1} is decreasing to zero as $t$ goes to $+\infty$ as $t^{\frac{-2}{1-\alpha}}$.
\end{prop}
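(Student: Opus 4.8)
The plan is to read this proposition as the case $M\equiv 1$ (equivalently $\ell=0$) of the abstract scheme of Section~\ref{NUSFF}, specialized to the concrete data $H=L^{2}(\Omega)$, $A=-\Delta$ with domain $H_{0}^{1}(\Omega)\cap H^{2}(\Omega)$, $U=L^{2}(\Omega)$ and $B$ the multiplication operator by $\sqrt{a}$, so that $B^{*}=B$ and $BB^{*}$ is multiplication by $a$. Since $a\in L^{\infty}(\Omega)$, one has $B\in\mathcal{L}(U,H)\subset\mathcal{L}(U,H_{-\frac12})$; moreover $A$ is self-adjoint, strictly positive, with compact resolvent, so that $H_{\frac12}=H_{0}^{1}(\Omega)\hookrightarrow H=L^{2}(\Omega)$ compactly. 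Hence all the abstract hypotheses of Sections~\ref{WFF}--\ref{NUSFF} are met, and in particular Theorem~\ref{WFF1} already gives that $\mathcal{A}$ generates a $C_{0}$ contraction semigroup on $\mathcal{H}$; the only new content is the decay rate.

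First I would identify the auxiliary operator $\mathcal{A}_{0}$ of Section~\ref{NUSFF} with the damped-wave generator $\mathcal{A}_{0}(u,v)=(v,\Delta u-av)$ displayed above, whose semigroup on $\mathcal{H}_{0}=H_{0}^{1}(\Omega)\times L^{2}(\Omega)$ is exponentially stable by the standing assumption (the geometric control condition of \cite{blr}). By the Huang--Pr\"uss characterization \cite{huang,pruss} this exponential stability is equivalent to $i\R\subset\rho(\mathcal{A}_{0})$ together with the uniform bound $\sup_{\omega\in\R}\|(i\omega I-\mathcal{A}_{0})^{-1}\|_{\mathcal{L}(\mathcal{H}_{0})}<+\infty$, which is precisely condition \eqref{NUSFF3} with $M(|\omega|)\equiv 1$.

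Next I would record that $i\R\subset\rho(\mathcal{A})$: for $\eta>0$, Lemma~\ref{AFF10} asserts that $i\omega I-\mathcal{A}$ is both injective and surjective for every $\omega\in\R$ (this is the step where a unique-continuation argument for the Dirichlet Laplacian through $\widetilde{\omega}_{0}$ enters, but it is supplied there), so by the bounded inverse theorem $i\omega\in\rho(\mathcal{A})$ for all $\omega\in\R$. With $i\R\subset\rho(\mathcal{A})$ and condition \eqref{NUSFF3} in force with $M\equiv 1$, Theorem~\ref{th: estimation resolvante} yields a constant $C>0$ with $\|(i\omega I-\mathcal{A})^{-1}\|_{\mathcal{L}(\mathcal{H})}\leq C|\omega|^{1-\alpha}$ for all $\omega\geq 1$; since the coefficients are real, conjugation gives the same bound for $\omega\leq -1$, while on the compact set $|\omega|\leq 1$ the resolvent is bounded by continuity. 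Thus $\limsup_{|\omega|\to\infty}|\omega|^{\alpha-1}\|(i\omega I-\mathcal{A})^{-1}\|_{\mathcal{L}(\mathcal{H})}<+\infty$.

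Finally I would invoke Corollary~\ref{cor : polynomiallement stable} (Borichev--Tomilov \cite[Theorem~2.4]{borichevtomilov}) with $\ell=0$: for a bounded $C_{0}$-semigroup with $i\R\subset\rho(\mathcal{A})$, a resolvent growth of order $|\omega|^{1-\alpha}$ along the imaginary axis forces $\|e^{t\mathcal{A}}X\|_{\mathcal{H}}\leq C(1+t)^{-\frac{1}{1-\alpha}}\|X\|_{\mathcal{D}(\mathcal{A})}$ for all $X\in\mathcal{D}(\mathcal{A})$, $t\geq 0$; squaring and using $E(t)=\tfrac12\|e^{t\mathcal{A}}X\|_{\mathcal{H}}^{2}$ gives the stated energy decay $E(t)\leq C(1+t)^{-\frac{2}{1-\alpha}}\|X\|_{\mathcal{D}(\mathcal{A})}^{2}$. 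The only real obstacles are the two concrete verifications feeding the abstract scheme — that the GCC/exponential-stability assumption on the classically damped wave equation is indeed \eqref{NUSFF3} with $M\equiv 1$, and that $i\R\subset\rho(\mathcal{A})$ via Lemma~\ref{AFF10} — since Theorem~\ref{th: estimation resolvante} does all of the analytical heavy lifting.
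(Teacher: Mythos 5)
Your proposal is correct and follows essentially the same route as the paper: the paper's proof likewise uses Lemma \ref{AFF10} plus the closed graph theorem to get $i\R\subset\rho(\mathcal{A})$, and then invokes Corollary \ref{cor : polynomiallement stable} with $\ell=0$ (which itself rests on Theorem \ref{th: estimation resolvante} and Borichev--Tomilov). The extra details you supply --- identifying $B$ as multiplication by $\sqrt{a}$, and noting via Huang--Pr\"uss that the GCC exponential-stability assumption is exactly condition \eqref{NUSFF3} with $M\equiv 1$ --- are consistent with what the paper leaves implicit.
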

\begin{proof}
Following to Lemma \ref{AFF10} the operator $(i\omega I-\mathcal{A})$ is bijective for every $\omega\in\R$, then using the closed graph theorem  we follow that $i\R\subset\rho(\mathcal{A})$. The result follow now from 
Corollary~\ref{cor : polynomiallement stable}.
\end{proof}
\begin{rem}
In the case where $\Omega = (0,1) \times (0,1)$ and
$$
a(x) = \left\{
\begin{array}{ll}
1, \, \forall \, x \in (0,\varepsilon) \times (0,1),\\
0, \, \text{elsewhere},
\end{array}
\right.,
$$
where $\varepsilon>0$ is a constant, we have according to \cite{stahn} that the semigroup generated by the operator $\mathcal{A}_{0}$ decays as $t^{-\frac{3}{2}}$ (which is optimal). We obtain in this case from Corollary \ref{cor : polynomiallement stable} that the polynomial decay rate for the semigroup $e^{t\mathcal{A}}$ is given by $t^{-\frac{2}{5-2 \alpha}}$.

However, we obtain a logarithm decay rate of the semigroup $e^{t\mathcal{A}}$ as given in Corollary \ref{cor : logarithme stable} without any geometrical condition since according to \cite{lebeau} the resolvent of the operator $\mathcal{A}_{0}$ satisfies the condition \eqref{NUSFF3} with $M(|\omega|)=\e^{K_{0}|\omega|}$ for some $K_{0}>0$.
\end{rem}
\subsection{Fractional-Kelvin-Voigt damped wave equation}
We consider the following damped wave system
\begin{equation*}
\left\{\begin {array}{ll}
\partial_{t}^{2}u(x,t)-\Delta u(x,t)-\mathrm{div}\left(a(x)\nabla\partial_{t}^{\alpha,\eta}u(x,t)\right)=0&(x,t)\in\Omega\times \R_{+}
\\
u(x,t)=0&(x,t)\in\Gamma\times\R_{+}
\\
u(x,0)=u^{0}(x),\quad \partial_{t}u(x,0)=u^{1}(x)&x\in\Omega,
\end{array}\right.
\end{equation*}
where we have made the same notations as the previous subsection. Equivalently, we have
\begin{equation}\label{AFF11}
\left\{\begin{array}{ll}
\ds\partial_{t}^{2}u(x,t)-\Delta u(x,t)-\gamma\mathrm{div}\left(\sqrt{a(x)}\int_{\R}p(\xi)\varphi(x,t,\xi)\,\ud\xi\right)=0&(x,t)\in\Omega\times \R_{+}
\\
\partial_{t}\varphi(x,t,\xi)+(|\xi|^2+\eta)\,\varphi(x,t,\xi)=p(\xi)\sqrt{a(x)}\nabla\partial_{t}u(x,t)&(x,t,\xi)\in\Omega\times\R_{+}\times\R
\\
u(x,t)=0&(x,t)\in\Gamma\times\R_{+}
\\
u(x,0)=u^{0}(x),\quad\partial_{t}u(x,0)=u^{1}(x),\quad\varphi(x,0,\xi)=0&x\in\Omega,\;\xi\in\R.
\end{array}\right.
\end{equation}
The energy of the system is given by
$$
E(t)=\frac{1}{2}\left(\|\partial_{t}u(t)\|_{L^{2}(\Omega)}^{2}+\|\nabla u(t)\|_{L^{2}(\Omega)}^{2}+\gamma\int_{\R}\|\varphi(t,\xi)\|_{(L^{2}(\Omega))^{n}}^{2}\,\ud\xi\right).
$$
The operator $A=-\Delta$ is strictly positive and auto-adjoint operator in $H=L^{2}(\Omega)$ and with domain $\mathcal{D}(A)=H_{0}^{1}(\Omega)\cap H^{2}(\Omega)$. The operator $\mathcal{A}$ corresponding to the Cauchy problem of system \eqref{AFF11} is given by
$$
\mathcal{A}\left(\begin{array}{c}
u
\\
v
\\
\varphi
\end{array}\right)=\left(\begin{array}{c}
v
\\
\ds\Delta u+\gamma\mathrm{div}\left(\sqrt{a}\int_{\R}p(\xi)\varphi(\xi)\,\ud\xi\right)
\\
-(|\xi|^2+\eta)\varphi(\xi)+p(\xi)\sqrt{a}\nabla v
\end{array}\right)
$$
with domain in the Hilbert space $\mathcal{H}=H_{0}^{1}(\Omega)\times L^{2}(\Omega)\times L^{2}(\R;(L^{2}(\Omega))^{n})$ is given by
\begin{equation*}
\begin{split}
\mathcal{D}(\mathcal{A})=\Big\{(u,v,\varphi)\in\mathcal{H}:\,v\in H_{0}^{1}(\Omega),\,\ds\Delta u+\gamma\mathrm{div}\left(\sqrt{a}\int_{\R}p(\xi)\varphi(\xi)\,\ud\xi\right)\in L^{2}(\Omega),
\\
|\xi|\varphi\in L^{2}(\R;(L^{2}(\Omega))^{n}),\,(|\xi|^2+\eta)\varphi(\xi)-p(\xi)\sqrt{a}\nabla v\in L^{2}(\R;(L^{2}(\Omega))^{n})\Big\}.
\end{split}
\end{equation*}
\begin{prop}\label{AFF19}
The operator $\mathcal{A}$ generates a $C_{0}$-semigroup of contraction therefore, system \eqref{AFF11} is well posed in the energy space $\mathcal{H}$. Moreover, if we assume that the intersection of the boundary of $\Omega$ with any connected component of $\omega_{0}=\mathrm{supp}(a)$ is nonempty subset with a non-zero measure then the semigroup is strongly stable.
\end{prop}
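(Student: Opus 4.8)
The plan is to deduce both assertions from the abstract machinery of Sections~\ref{WFF} and \ref{SSFF} by choosing the operators $A$ and $B$ appropriately, so that almost nothing has to be redone. For well-posedness, the first step is to recognise \eqref{AFF11} as a particular case of the abstract system \eqref{IFF2}--\eqref{IFF4}: take $H=L^{2}(\Omega)$ and $A=-\Delta$ with domain $H^{2}(\Omega)\cap H_{0}^{1}(\Omega)$, which is self-adjoint and strictly positive, so that $H_{\frac12}=H_{0}^{1}(\Omega)$ and $H_{-\frac12}=H^{-1}(\Omega)$; then take $U=(L^{2}(\Omega))^{n}$ and $B\in\mathcal L(U,H_{-\frac12})$ given by $B\psi=\mathrm{div}(\sqrt a\,\psi)$, whose adjoint is $B^{*}v=\sqrt a\,\nabla v$. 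The only point to verify is that $B$ is bounded, which follows from
\[
\|B^{*}v\|_{U}^{2}=\int_{\Omega}a\,|\nabla v|^{2}\,\ud x\le\|a\|_{L^{\infty}(\Omega)}\,\|v\|_{H_{\frac12}}^{2},\qquad v\in H_{0}^{1}(\Omega).
\]
With these identifications the operator $\mathcal A$ of this subsection is exactly the one defined by \eqref{IFF6}--\eqref{IFF7}, so Theorem~\ref{WFF1} applies directly and yields the contraction $C_{0}$ semigroup and the well-posedness of \eqref{AFF11} in $\mathcal H$.

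For the strong stability I would invoke Theorem~\ref{SSFF20}, which asks for two things. The first is the compactness of the embedding $H_{\frac12}\hookrightarrow H$, that is $H_{0}^{1}(\Omega)\hookrightarrow L^{2}(\Omega)$, which is the Rellich--Kondrachov theorem since $\Omega$ is bounded with smooth boundary. The second is the uniqueness statement: the only solution $u$ of the reduced system \eqref{SSFF21}, which here reads
\[
\partial_{t}^{2}u-\Delta u=0\ \text{in}\ \Omega,\qquad u=0\ \text{on}\ \Gamma,\qquad \sqrt a\,\nabla\partial_{t}u=0,
\]
with $(u,\partial_{t}u)\in H_{0}^{1}(\Omega)\times H_{0}^{1}(\Omega)$ and $\Delta u\in L^{2}(\Omega)$ for all $t\ge0$, is $u\equiv0$. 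To prove this I would expand $u$ in the Dirichlet eigenbasis $(e_{k})$ of $-\Delta$, with eigenvalues $0<\lambda_{1}\le\lambda_{2}\le\cdots$, so that $u(t)=\sum_{k}\bigl(\alpha_{k}\cos(\sqrt{\lambda_{k}}t)+\beta_{k}\sin(\sqrt{\lambda_{k}}t)\bigr)e_{k}$; inserting this into $\sqrt a\,\nabla\partial_{t}u(t)=0$ for all $t$ and using the linear independence over $t$ of the functions $\cos(\sqrt\mu t),\sin(\sqrt\mu t)$ attached to the distinct eigenvalues $\mu$ forces $\sqrt a\,\nabla\zeta=0$ for every eigenfunction $\zeta$ appearing in the expansion. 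Thus it is enough to show that an eigenfunction $\zeta$ with $-\Delta\zeta=\mu\zeta$, $\zeta|_{\Gamma}=0$ and $\sqrt a\,\nabla\zeta=0$ must vanish. From $\sqrt a\,\nabla\zeta=0$ one gets $\nabla\zeta=0$ a.e.\ on the set where $a$ is positive, so $\zeta$ is constant on each connected component of the interior of $\omega_{0}=\mathrm{supp}(a)$; on such a component $\Delta\zeta=0$, hence $\mu\zeta=0$ there, and since $\mu>0$ one gets $\zeta=0$ there (alternatively, one uses the hypothesis that the component meets $\Gamma$ in a set of positive surface measure together with the vanishing of the $H_{0}^{1}(\Omega)$-trace of $\zeta$ to conclude that the constant value is $0$). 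Consequently $\zeta$, and its gradient, vanish on a nonempty open subset of $\Omega$, and Aronszajn's unique continuation principle for $-\Delta\zeta=\mu\zeta$ propagates this to the whole connected set $\Omega$, giving $\zeta\equiv0$. Hence $u\equiv0$, so the hypotheses of Theorem~\ref{SSFF20} are satisfied and $e^{t\mathcal A}$ is strongly stable.

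The routine parts of this are the abstract reduction and the functional-analytic bookkeeping; the real content is the uniqueness step for the reduced system. I expect the main obstacle to lie there, in two places: first, in passing rigorously from the damping constraint $\sqrt a\,\nabla\partial_{t}u=0$ to the vanishing of an eigenfunction on a genuine open set — this is exactly where the geometric hypothesis on the connected components of $\mathrm{supp}(a)$ and the Dirichlet boundary condition enter, and where one has to be careful about the measure-theoretic description of $\omega_{0}$, of its components and of the associated boundary traces; and second, in the unique continuation argument that upgrades this local vanishing into vanishing on all of $\Omega$.
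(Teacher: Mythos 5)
Your proposal is correct, and the well-posedness half follows the paper exactly (invoke Theorem~\ref{WFF1} after identifying $B\psi=\mathrm{div}(\sqrt a\,\psi)$, $B^{*}v=\sqrt a\,\nabla v$; your explicit check that $B$ is bounded is a detail the paper leaves implicit). The strong-stability half also starts as the paper does --- Rellich compactness plus the reduction to the uniqueness statement for the overdetermined free wave system \eqref{AFF14} via Theorem~\ref{SSFF20} --- but your treatment of that uniqueness statement is genuinely different. The paper argues directly on the PDE: from $\sqrt a\,\nabla\partial_t u=0$ it deduces that on $\omega_0$ the solution has the form $\beta t^{2}+\delta t+\phi(x)$, uses the hypothesis that each component of $\omega_0$ meets $\Gamma$ in a set of positive measure (together with the Dirichlet trace) to force $\beta=\delta=0$, and then applies unique continuation for the \emph{wave} equation to $v=\partial_t u$, which vanishes on $\omega_0\times\R_+$. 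You instead decompose in the Dirichlet eigenbasis, separate frequencies, and reduce to showing that an eigenfunction $\zeta$ with $\sqrt a\,\nabla\zeta=0$ vanishes; there the strict positivity $\mu>0$ of the Dirichlet eigenvalues kills the constant on the open set $\widetilde\omega_0$ where $a\ge a_0$, and \emph{elliptic} unique continuation finishes. What your route buys is that the geometric hypothesis on $\partial\Omega\cap\omega_0$ becomes superfluous for this step (the coercivity of $A$ replaces it), and you only need unique continuation for a stationary elliptic equation rather than for the evolution equation; what it costs is the spectral machinery, in particular the frequency-separation step, which you should justify by the standard almost-periodicity/mean-value argument (the series for $\sqrt a\,\nabla\partial_t u$ need not converge uniformly in $t$, only in $C([0,T];(L^2)^n)$ on compacts, so extracting the coefficients of $\cos(\sqrt\mu t)$, $\sin(\sqrt\mu t)$ requires the usual limit $\lim_{T\to\infty}T^{-1}\int_0^T f(t)e^{-i\sqrt\mu t}\,\ud t$ applied to the scalar observables). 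One small imprecision: $\sqrt a\,\nabla\zeta=0$ gives $\nabla\zeta=0$ a.e.\ on $\{a>0\}$, not on the interior of $\mathrm{supp}(a)$; you should work on the open set $\widetilde\omega_0$ guaranteed by the standing assumption $a\ge a_0$ there, which is exactly what your argument needs.
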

\begin{proof}
The well-posedness follows from Theorem \ref{WFF1}. Since $H_{0}^{1}(\Omega)\hookrightarrow L^{2}(\Omega)$ is compact embedding, then thanks to Theorem \ref{SSFF20} strong stabilization will be guarantee if we prove that the only solution of the problem 
\begin{equation}\label{AFF14}
\left\{\begin {array}{ll}
\partial_{t}^{2}u(x,t)-\Delta u(x,t)=0&(x,t)\in\Omega\times \R_{+}
\\
\sqrt{a(x)}\nabla\partial_{t}u(x,t)=0&(x,t)\in\Omega\times \R_{+}
\\
u(x,t)=0&(x,t)\in\Gamma\times\R_{+},
\end{array}\right.
\end{equation}
such that $\Delta u(x,t)\in L^{2}(\Omega)$, $\partial_{t}u(x,t)\in H_{0}^{1}(\Omega)$ for all $t\geq 0$, is the zero solution. Without lost of generality we can assume that $\omega_{0}$ is connected. Using the second line of \eqref{AFF14} we can see easily that $\nabla u(x,t)$ it does not depend on the time variable and $\partial_{t}u(x,t)$ is independent of the space variable in $\omega_{0}$, say that $\nabla u(x,t)=f(x)$ and $\partial_{t}u(x,t)=g(t)$ in $\omega_{0}$. Putting these two equalities into the first equation of \eqref{AFF14} then we follow that $\partial_{t}^{2}u$ is constant in $\omega_{0}$. The Combination of all these properties of $u$ imply  that the solution of \eqref{AFF14} is written in $\omega_{0}$ as follow $u(x,t)=\beta t^{2}+\delta t+\phi(x)$, where $\beta$ and $\delta$ are two real numbers. Since, $u\equiv0$ on $\partial\omega_{0}\cap\Gamma$ then $u$ does not depend on the time variable and we get $u=\phi$ in $\omega_{0}$. 
\\
We set now $v(x,t)=\partial_{t}u(x,t)$, then $v$ satisfies the following system
\begin{equation*}
\left\{\begin{array}{ll}
\partial_{t}^{2}v(x,t)-\Delta v(x,t)=0&(x,t)\in\Omega\times\R_{+}
\\
v(x,t)=0&(x,t)\in\omega\times\R_{+}
\\
v(x,t)=0&(x,t)\in\Gamma\times\R_{+}.
\end{array}\right.
\end{equation*}
Since $v(t)\in H_{0}^{1}(\Omega)$ for all $t\geq 0$ then using the unique continuation theorem (see \cite{lions, zuazua}) we find that $v\equiv 0$ in $\Omega$. This means that $u$ is only depends on the $x$ variable and verifying the following system of equations
\begin{equation*}
\left\{\begin{array}{ll}
-\Delta u(x)=0&x\in\Omega
\\
u(x)=0&x\in\Gamma.
\end{array}\right.
\end{equation*}
Since the Dirichlet Laplacian operator is invertible we follow that $u\equiv0$ in $\Omega$. And this completes the proof.
\end{proof}
\begin{lem}\label{AFF9}
For all $\omega\in\R$ the operator $(i\omega I-\mathcal{A})$ is injective.
\end{lem}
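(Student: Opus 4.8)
The plan is the usual eigenvalue-exclusion argument: take $X=(u,v,\varphi)\in\mathcal{D}(\mathcal{A})$ with $(i\omega I-\mathcal{A})X=0$ and show $X=0$. First I would pair this identity with $X$ in $\mathcal{H}$ and take real parts. Since $\langle(i\omega I-\mathcal{A})X,X\rangle_{\mathcal{H}}=0$ and $\re\langle i\omega X,X\rangle_{\mathcal{H}}=0$, the dissipation identity (obtained exactly as in the proof of Theorem \ref{WFF1}, here with $B^{*}v=\sqrt{a}\,\nabla v$) gives
\[
\gamma\int_{\R}(|\xi|^{2}+\eta)\,\|\varphi(\xi)\|_{(L^{2}(\Omega))^{n}}^{2}\,\ud\xi=0 .
\]
Because $|\xi|^{2}+\eta>0$ for a.e.\ $\xi\in\R$, this forces $\varphi\equiv0$ in $L^{2}(\R;(L^{2}(\Omega))^{n})$.

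With $\varphi\equiv0$ the three components of $(i\omega I-\mathcal{A})X=0$ reduce to $v=i\omega u$, to $p(\xi)\sqrt{a}\,\nabla v=0$ for a.e.\ $\xi$, and to $\Delta u+\omega^{2}u=0$; since $p(\xi)=|\xi|^{\frac{2\alpha-1}{2}}\ne0$ for $\xi\ne0$, the middle relation means $\sqrt{a}\,\nabla v=0$, i.e.\ $\nabla v=0$ a.e.\ on $\{a>0\}$. Moreover, $X\in\mathcal{D}(\mathcal{A})$ now gives $\Delta u\in L^{2}(\Omega)$, hence $u\in H^{2}(\Omega)\cap H_{0}^{1}(\Omega)$ by elliptic regularity. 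If $\omega=0$, then $v=0$ and $\Delta u=0$ with $u\in H_{0}^{1}(\Omega)$, so $u=0$ and $X=0$. If $\omega\ne0$, then $\nabla u=0$ a.e.\ on $\widetilde{\omega}_{0}$ (where $a\ge a_{0}>0$), so $\Delta u=0$ on $\widetilde{\omega}_{0}$ and therefore $\omega^{2}u=0$ there, i.e.\ $u\equiv0$ on the nonempty open set $\widetilde{\omega}_{0}$. Since $(\Delta+\omega^{2})u=0$ has constant coefficients, $u$ is real-analytic on the connected domain $\Omega$, so $u\equiv0$ in $\Omega$ (equivalently one invokes the unique continuation theorem, as in \cite{lions,zuazua}). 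Then $v=i\omega u=0$, and together with $\varphi=0$ we conclude $X=0$, which is the claimed injectivity.

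I expect the only delicate point to be this last propagation step: one must be sure that after eliminating $\varphi$ the equation for $u$ is genuinely the elliptic equation $(\Delta+\omega^{2})u=0$ with enough regularity ($u\in H^{2}(\Omega)$) for the vanishing of $u$ on $\widetilde{\omega}_{0}$ to propagate to all of $\Omega$. Everything else is a routine unravelling of the resolvent equation combined with the dissipation identity, parallel to the reasoning already used for system \eqref{AFF14} in the proof of Proposition \ref{AFF19}.
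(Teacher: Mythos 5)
Your proposal is correct and follows essentially the same route as the paper: the dissipation identity forces $\varphi\equiv 0$, the resolvent equation then reduces to the Helmholtz problem $\omega^{2}u+\Delta u=0$ with $\sqrt{a}\,\nabla u=0$ on the damping region, and unique continuation finishes the argument. The only (immaterial) difference is that the paper applies unique continuation to the derivatives $w_{j}=\partial_{x_{j}}u$, which satisfy $(\Delta+\omega^{2})w_{j}=0$ and vanish on $\mathrm{supp}(a)$, whereas you first deduce $u\equiv 0$ on $\widetilde{\omega}_{0}$ and continue $u$ itself; your explicit treatment of the case $\omega=0$ is a small point the paper glosses over.
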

\begin{proof}
Let $X=\left(\begin{array}{c}
u
\\
v
\\
\varphi
\end{array}\right)\in\mathcal{D}(\mathcal{A})$ such that
\begin{equation}\label{AFF3}
\mathcal{A}X=i\omega X
\end{equation}
Then the dissipation property of the operator $\mathcal{A}$ imply that
$$
\re\langle\mathcal{A}X,X\rangle=-\gamma\int_{\R}(|\xi|^{2}+\eta)\|\varphi(\xi)\|_{(L^{2}(\Omega))^{n}}^{2}\,\ud\xi=0.
$$
Then we deduce that
$$
\varphi(\xi)=0\quad\text{in }(L^{2}(\Omega))^{n}\text{ a.e }\xi\in\R.
$$
Since that problem \eqref{AFF3} becomes
\begin{equation*}
\left\{\begin{array}{ll}
v=i\omega u&\text{in }\Omega
\\
\omega^{2}u+\Delta u=0&\text{in }\Omega
\\
\sqrt{a(x)}\nabla u=0&\text{in }\mathrm{supp}(a)
\\
u=0&\text{on }\Gamma.
\end{array}\right.
\end{equation*}
We denote by $w_{j}=\partial_{x_{j}}u$ and we derive the second and the third equation, one gets
\begin{equation*}
\left\{\begin{array}{ll}
\omega^{2}w_{j}+\Delta w_{j}=0&\text{in }\Omega
\\
w_{j}=0&\text{in }\mathrm{supp}(a),
\end{array}\right.
\end{equation*}
By unique continuation theorem we find that $w_{j}=0$ in $\Omega$ therefore $u=0$ in $\Omega$ since $u_{|\Gamma}=0$ and consequently $U=0$. Thus, the injection of the operator $(i\omega I-\mathcal{A})$ is proven.
\end{proof}
\begin{lem}\label{AFF7}
Assume that $\eta>0$ and $\omega\in\R$ then for any $f\in H^{-1}(\Omega)$ the following problem
\begin{equation}\label{AFF4}
\left\{\begin{array}{ll}
\omega^{2}u+\Delta u+(\omega^{2}c_{1}+i\omega c_{2})\mathrm{div}(a\nabla u)=f&\text{in }\Omega
\\
u=0&\text{on }\Gamma
\end{array}\right.
\end{equation}
where
$$
c_{1}=\gamma\int_{\R}\frac{p(\xi)^{2}}{(|\xi|^{2}+\eta)^{2}+\omega^{2}}\,\ud\xi\qquad\text{and}\qquad c_{2}=\gamma\int_{\R}\frac{p(\xi)^{2}(|\xi|^{2}+\eta)}{(|\xi|^{2}+\eta)^{2}+\omega^{2}}\,\ud\xi
$$
admits a unique solution $u\in H_{0}^{1}(\Omega)$.
\end{lem}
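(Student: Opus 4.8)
The plan is to pass to the variational formulation of \eqref{AFF4}, recognize it as a coercive problem perturbed by a compact operator, and then deduce existence from uniqueness, reducing the latter to Lemma~\ref{AFF9}.

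First I would rewrite \eqref{AFF4} weakly. Since $a\in L^{\infty}(\Omega)$, for $u\in H_{0}^{1}(\Omega)$ one has $a\nabla u\in(L^{2}(\Omega))^{n}$ and $\mathrm{div}(a\nabla u)\in H^{-1}(\Omega)$, so $u\in H_{0}^{1}(\Omega)$ solves \eqref{AFF4} if and only if
$$
\mathcal{B}(u,v):=\int_{\Omega}\nabla u\cdot\nabla\bar v\,\ud x+(\omega^{2}c_{1}+i\omega c_{2})\int_{\Omega}a\,\nabla u\cdot\nabla\bar v\,\ud x-\omega^{2}\int_{\Omega}u\bar v\,\ud x=-\langle f,v\rangle_{H^{-1}(\Omega),H_{0}^{1}(\Omega)}
$$
for all $v\in H_{0}^{1}(\Omega)$. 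Along the way I would record the positivity facts that make the argument run: $\gamma>0$ because $\sin(\alpha\pi)>0$, and $c_{1},c_{2}\in(0,+\infty)$ because the corresponding integrands on $(0,+\infty)$ are nonnegative and integrable for $0<\alpha<1$, $\eta>0$ (integrable near $0$ since $2\alpha-1>-1$, and at infinity because of the $\rho^{-4}$, resp. $\rho^{-3}$, decay).

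If $\omega=0$ the form $\mathcal{B}$ is already continuous and coercive on $H_{0}^{1}(\Omega)$ by Poincar\'e's inequality, and Lax--Milgram closes the case. For $\omega\neq0$ I would introduce the shifted form $\mathcal{B}_{0}(u,v):=\mathcal{B}(u,v)+2\omega^{2}\langle u,v\rangle_{L^{2}(\Omega)}$; since $a\geq0$ and $c_{1}>0$ one gets $\re\,\mathcal{B}_{0}(u,u)\geq\|\nabla u\|_{L^{2}(\Omega)}^{2}$, so $\mathcal{B}_{0}$ is continuous and coercive and is represented, via Lax--Milgram, by an isomorphism $T\in\mathcal{L}(H_{0}^{1}(\Omega),H^{-1}(\Omega))$. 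Because the embedding $\iota:H_{0}^{1}(\Omega)\hookrightarrow H^{-1}(\Omega)$ is compact (as $H_{0}^{1}(\Omega)\hookrightarrow L^{2}(\Omega)$ is compact), \eqref{AFF4} rewrites as $\left(\mathrm{Id}-2\omega^{2}T^{-1}\iota\right)u=-T^{-1}f$ in $H_{0}^{1}(\Omega)$, an ``identity minus compact'' equation; the Fredholm alternative then reduces the solvability (and uniqueness) for every $f\in H^{-1}(\Omega)$ to the injectivity of the homogeneous problem.

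That injectivity is the only problem-specific step, and the one I view as the heart of the matter. Given $u\in H_{0}^{1}(\Omega)$ with $\mathcal{B}(u,v)=0$ for all $v$, I would take $v=u$ and read off the imaginary part: $\omega c_{2}\int_{\Omega}a|\nabla u|^{2}\,\ud x=0$, which (using $\omega\neq0$, $c_{2}>0$) forces $\sqrt{a}\,\nabla u=0$, i.e. $\nabla u\equiv0$ on $\mathrm{supp}(a)$; hence $a\nabla u=0$ and the weak identity collapses to $\omega^{2}u+\Delta u=0$ in $\Omega$, $u=0$ on $\Gamma$, so in particular $\Delta u=-\omega^{2}u\in L^{2}(\Omega)$. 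Then $(u,i\omega u,0)\in\mathcal{D}(\mathcal{A})$ and $(i\omega I-\mathcal{A})(u,i\omega u,0)=0$, so Lemma~\ref{AFF9} gives $u=0$ (equivalently, one may invoke unique continuation exactly as in the proof of Lemma~\ref{AFF9}: each $\partial_{x_{j}}u$ solves the Helmholtz equation and vanishes on the nonempty open set $\widetilde{\omega}_{0}$, hence vanishes in $\Omega$, so $u$ is constant and then zero). I expect no real obstacle beyond this and some routine bookkeeping in the coercive/compact splitting; the only points to be careful about are the finiteness and positivity of $c_{1},c_{2}$ and checking that the overdetermined Helmholtz problem genuinely falls under Lemma~\ref{AFF9}.
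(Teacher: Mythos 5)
Your proof is correct and follows essentially the same route as the paper: Lax--Milgram for a coercive modification of the form, compactness of the embedding to reduce to an ``identity minus compact'' equation, the Fredholm alternative, and injectivity of the homogeneous problem via the imaginary part of the energy identity plus unique continuation (the paper treats the complex form as a real $2\times2$ system and inverts $-\Delta-(\omega^{2}c_{1}+i\omega c_{2})\mathrm{div}(a\nabla\cdot)$ rather than shifting by $2\omega^{2}\langle\cdot,\cdot\rangle_{L^{2}}$, but these are cosmetic differences). Your extra care about the finiteness and positivity of $c_{1},c_{2}$, and the observation that the injectivity step is literally an instance of Lemma~\ref{AFF9}, are both sound.
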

\begin{proof}
First we note that the coefficients $c_{1}$ and $c_{2}$ are well defined. We distinguish two cases:

\underline{\textbf{Case 1:} $\omega=0$.} In this case we use the Lax-Milgram's theorem to prove the unique solution $u\in H_{0}^{1}(\Omega)$ of \eqref{AFF4}.

\underline{\textbf{Case 2:} $\omega\in\R^{*}$.} Separating the real and the imaginary parts of $u$ and $f$ by writing $u=u_{1}+iu_{2}$ and $f=f_{1}+if_{2}$ and we consider the following mixed system
\begin{equation}\label{AFF5}
\left\{\begin{array}{ll}
-\Delta u_{1}-c_{1}\omega^{2}\mathrm{div}(a\nabla u_{1})+c_{2}\omega\mathrm{div}(a\nabla u_{2})=f_{1}&\text{in }\Omega
\\
-\Delta u_{2}-c_{1}\omega^{2}\mathrm{div}(a\nabla u_{2})-c_{2}\omega\mathrm{div}(a\nabla u_{1})=f_{2}&\text{in }\Omega
\\
u_{1}=u_{2}=0&\text{on }\Gamma.
\end{array}\right.
\end{equation}
Consider the following bilinear form in $(H_{0}^{1}(\Omega)\times H_{0}^{1}(\Omega))^{2}$ defined by
\begin{equation*}
\begin{split}
L((u_{1},u_{2});(w_{1},w_{2}))&=\int_{\Omega}\nabla u_{1}\nabla w_{1}\,\ud x+\int_{\Omega}\nabla u_{2}\nabla w_{2}\,\ud x+c_{1}\omega^{2}\int_{\Omega}\nabla u_{1}\nabla w_{1}a\,\ud x
\\
&+c_{1}\omega^{2}\int_{\Omega}\nabla u_{2}\nabla w_{2}a\,\ud x-c_{2}\omega\int_{\Omega}\nabla u_{2}\nabla w_{1}a\,\ud x+c_{2}\omega\int_{\Omega}\nabla u_{1}\nabla w_{2}a\,\ud x.
\end{split}
\end{equation*}
It is clear that $L$ is continuous and coercive in $(H_{0}^{1}(\Omega)\times H_{0}^{1}(\Omega))^{2}$ then by Lax-Milgram's theorem there exists a unique couple $(u_{1},u_{2})\in H_{0}^{1}(\Omega)\times H_{0}^{1}(\Omega)$ such that
$$
L((u_{1},u_{2});(w_{1},w_{2}))=\langle f_{1},w_{1}\rangle_{H^{-1}\times H_{0}^{1}}+\langle f_{2},w_{2}\rangle_{H^{-1}\times H_{0}^{1}},\quad \forall\,(w_{1},w_{2})\in H_{0}^{1}(\Omega)\times H_{0}^{1}(\Omega).
$$
This leads to the existence and the uniqueness of a solution of the problem \eqref{AFF5} in $H_{0}^{1}(\Omega)\times H_{0}^{1}(\Omega)$.

This proves in particular that the operator $A_{\omega}=-\Delta-(\omega^{2}c_{1}+i\omega c_{2})\mathrm{div}(a\nabla\,.\,)$ is invertible from $H_{0}^{1}(\Omega)$ into $H^{-1}(\Omega)$ then the first line of \eqref{AFF4} is equivalent to the following equation
\begin{equation}\label{AFF22}
(\omega^{2}A_{\omega}^{-1}-I)u=A_{\omega}^{-1}f.
\end{equation}
It follows from the compactness of the embedding $H_{0}^{1}(\Omega)\hookrightarrow H^{-1}(\Omega)$ that the inverse operator $A_{\omega}^{-1}$ is compact in $H^{-1}(\Omega)$. Let's consider the following problem 
\begin{equation}\label{AFF12}
\left\{\begin{array}{ll}
\omega^{2}u+\Delta u+(\omega^{2}c_{1}+i\omega c_{2})\mathrm{div}(a\nabla u)=0&\text{in }\Omega
\\
u=0&\text{in }\Gamma,
\end{array}\right.
\end{equation}
we multiplying the first line of \eqref{AFF12} by $\overline{u}$ and integrating over $\Omega$, one gets
\begin{equation}\label{AFF13}
\omega^{2}\|u\|_{L^{2}(\Omega)}^{2}-\|\nabla u\|_{L^{2}(\Omega)}^{2}-(\omega^{2}c_{1}+i\omega c_{2})\|\sqrt{a}\nabla u\|_{L^{2}(\Omega)}^{2}=0,
\end{equation}
then by taking the imaginary part of \eqref{AFF13} we obtain $\nabla u=0$ in $\mathrm{supp}(a)$. Proceeding as the proof of the previous lemma one gets $u=0$ in $\Omega$. This prove that the operator $(\omega^{2}A_{\omega}^{-1}-I)$ is injective. Then following to Fredhom's alternative theorem \cite[Th\'eoreme 6.6]{brezis}, equation \eqref{AFF22} admits a unique solution and therefore equation \eqref{AFF4} admits a unique solution.
\end{proof}
\begin{lem}\label{AFF21}
Let $\eta>0$ and $a$ smooth enough then for all $\omega\in\R$ the operator $(i\omega I-\mathcal{A})$ is surjective.
\end{lem}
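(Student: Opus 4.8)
The plan is to mirror, in this concrete PDE setting, the abstract surjectivity argument and to reduce the resolvent equation $(i\omega I-\mathcal{A})X=Y$ to the elliptic problem \eqref{AFF4} already solved in Lemma~\ref{AFF7}. Fix $\omega\in\R$ and $Y=(f,g,h)\in\mathcal{H}$. The first and third lines of $(i\omega I-\mathcal{A})X=Y$ give
$$
v=i\omega u-f,\qquad \varphi(\xi)=\frac{p(\xi)\sqrt{a}\,\nabla v+h(\xi)}{|\xi|^{2}+\eta+i\omega};
$$
substituting $\nabla v=i\omega\nabla u-\nabla f$ into the second line and using the two identities
$$
\gamma\int_{\R}\frac{p(\xi)^{2}}{|\xi|^{2}+\eta+i\omega}\,\ud\xi=c_{2}-i\omega c_{1},\qquad i\omega\,(c_{2}-i\omega c_{1})=\omega^{2}c_{1}+i\omega c_{2},
$$
with $c_{1},c_{2}$ the constants of Lemma~\ref{AFF7}, the second line collapses to exactly \eqref{AFF4}:
$$
\omega^{2}u+\Delta u+(\omega^{2}c_{1}+i\omega c_{2})\,\mathrm{div}(a\nabla u)=F,
$$
where $F:=-g-i\omega f+(c_{2}-i\omega c_{1})\,\mathrm{div}(a\nabla f)-\gamma\,\mathrm{div}\bigl(\sqrt{a}\int_{\R}p(\xi)h(\xi)(|\xi|^{2}+\eta+i\omega)^{-1}\,\ud\xi\bigr)$.

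Next I would check that $F\in H^{-1}(\Omega)$: $g$ and $i\omega f$ lie in $L^{2}(\Omega)$; $a\nabla f\in(L^{2}(\Omega))^{n}$, so $\mathrm{div}(a\nabla f)\in H^{-1}(\Omega)$; and by the Cauchy--Schwarz inequality in the variable $\xi$, together with the finiteness of $\int_{\R}p(\xi)^{2}\bigl((|\xi|^{2}+\eta)^{2}+\omega^{2}\bigr)^{-1}\,\ud\xi$ — which holds since $0<\alpha<1$, the integrand behaving like $|\xi|^{2\alpha-1}$ near the origin and like $|\xi|^{2\alpha-5}$ at infinity — the last vector field belongs to $(L^{2}(\Omega))^{n}$, hence its divergence to $H^{-1}(\Omega)$. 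Lemma~\ref{AFF7} then provides a unique $u\in H_{0}^{1}(\Omega)$ solving the above equation, and I set $v:=i\omega u-f\in H_{0}^{1}(\Omega)$ and $\varphi(\xi):=\bigl(p(\xi)\sqrt{a}\,\nabla v+h(\xi)\bigr)/(|\xi|^{2}+\eta+i\omega)$.

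It remains to verify that $X=(u,v,\varphi)\in\mathcal{D}(\mathcal{A})$ and $(i\omega I-\mathcal{A})X=Y$. The latter is obtained by running the three substitutions above backwards (using once more $i\omega(c_{2}-i\omega c_{1})=\omega^{2}c_{1}+i\omega c_{2}$). For the membership in $\mathcal{D}(\mathcal{A})$: $v\in H_{0}^{1}(\Omega)$ is immediate; the bounds $|\,|\xi|^{2}+\eta+i\omega\,|\ge|\xi|^{2}+\eta\ge\eta>0$, the boundedness of $|\xi|^{2k}/(|\xi|^{2}+\eta)^{2}$ for $k=0,1$, and the convergence of $\int_{\R}|\xi|^{2k}p(\xi)^{2}\bigl((|\xi|^{2}+\eta)^{2}+\omega^{2}\bigr)^{-1}\,\ud\xi$ for $k=0,1$ (all using $0<\alpha<1$ and, in the case $\omega=0$, $\eta>0$) give $\varphi,\,|\xi|\varphi\in L^{2}(\R;(L^{2}(\Omega))^{n})$; the requirement $\Delta u+\gamma\,\mathrm{div}\bigl(\sqrt{a}\int_{\R}p(\xi)\varphi(\xi)\,\ud\xi\bigr)\in L^{2}(\Omega)$ holds because, by the second line of the system, this quantity equals $i\omega v-g$; and finally $(|\xi|^{2}+\eta)\varphi-p(\xi)\sqrt{a}\,\nabla v=h-i\omega\varphi\in L^{2}(\R;(L^{2}(\Omega))^{n})$ by the third line.

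The main obstacle is the algebraic bookkeeping that identifies the coupled system with \eqref{AFF4} — in particular noticing that $\gamma\int_{\R}p(\xi)^{2}(|\xi|^{2}+\eta+i\omega)^{-1}\,\ud\xi$ is precisely $c_{2}-i\omega c_{1}$ — together with the routine integrability estimates, which all rest on $0<\alpha<1$ and on $\eta>0$; once these are in hand the statement follows immediately from Lemma~\ref{AFF7}. This is the direct analogue of the internal fractional-damping case of the previous subsection, the only difference being that the multiplication operator $a(\cdot)$ is replaced by $\mathrm{div}(a\nabla\cdot)$, which is exactly the operator Lemma~\ref{AFF7} is designed for.
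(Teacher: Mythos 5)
Your proposal is correct and follows essentially the same route as the paper: reduce the resolvent equation to the elliptic problem \eqref{AFF4} via the substitutions $v=i\omega u-f$ and the explicit formula for $\varphi$, observe that $\gamma\int_{\R}p(\xi)^{2}(|\xi|^{2}+\eta+i\omega)^{-1}\,\ud\xi=c_{2}-i\omega c_{1}$ so that the second line becomes exactly \eqref{AFF4} with the same right-hand side $F$, and invoke Lemma~\ref{AFF7}. You in fact supply more detail than the paper does on the two points it leaves implicit (that $F\in H^{-1}(\Omega)$ and that the resulting $X$ lies in $\mathcal{D}(\mathcal{A})$), and your integrability checks using $0<\alpha<1$ and $\eta>0$ are sound.
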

\begin{proof}
Let $Y=(f,g,h)\in\mathcal{H}$ and we look for an $X=(u,v,\varphi)\in\mathcal{D}(\mathcal{A})$ such that
\begin{equation}\label{AFF6}
(i\omega I-\mathcal{A})X=Y.
\end{equation}
Equivalently, we have
\begin{equation}\label{AFF8}
\left\{\begin{array}{ll}
v=i\omega u-f&\text{in }\Omega
\\
\omega^{2}u+\Delta u+(\omega^{2}c_{1}+i\omega c_{2})\mathrm{div}(a\nabla u)=F&\text{in }\Omega
\\
\ds\varphi(\xi)=i\omega\frac{p(\xi)}{|\xi|^{2}+\eta+i\omega}\sqrt{a}\nabla u-\frac{p(\xi)}{|\xi|^{2}+\eta+i\omega}\sqrt{a}\nabla f+\frac{h(\xi)}{|\xi|^{2}+\eta+i\omega}&\text{in }\Omega
\\
u=0&\text{on }\Gamma,
\end{array}\right.
\end{equation}
where $c_{1}$ and $c_{2}$ are defined in Lemma \ref{AFF7} and $F\in L^{2}(\Omega)$ is given by
$$
F=(c_{2}-i\omega c_{1})\mathrm{div}(a\nabla f)-i\omega f-g-\gamma\mathrm{div}\left(\sqrt{a}\int_{\R}p(\xi)\frac{h(x,\xi)}{|\xi|^{2}+\eta+i\omega}\,\ud\xi\right).
$$
Since for $a$ smooth enough $F\in H^{-1}(\Omega)$ then using Lemma \ref{AFF7}, problem \eqref{AFF8} has a unique solution $u\in H_{0}^{1}(\Omega)$ and therefore problem \eqref{AFF6} has a unique solution $X\in\mathcal{D}(\mathcal{A})$.
\end{proof}

We consider now the following auxiliary problem
\begin{equation}\label{AFF16}
\left\{\begin{array}{ll}
\partial_{t}^{2}w(x,t)-\Delta w(x,t)+\mathrm{div}(a(x)\nabla\partial_{t}w(x,t))=0&\text{in }\Omega\times \R_{+}
\\
w(x,t)=0&\text{on }\Gamma\times \R_{+}
\\
w(x,0)=w^{0}(x),\quad\partial_{t}w(x,0)=w^{1}(x)&\text{in }\Omega.
\end{array}\right.
\end{equation}
The equation \eqref{AFF16} is well posed in the Hilbert space $\mathcal{H}_{0}=H_{0}^{1}(\Omega)\times L^{2}(\Omega)$ and its solution is a semigroup generated by the operator $\mathcal{A}_{0}:\mathcal{D}(\mathcal{A})\subset\mathcal{H}_{0}\longrightarrow\mathcal{H}_{0}$ defined by
$$
\mathcal{A}_{0}\left(\begin{array}{c}
w
\\
v
\end{array}\right)=\left(\begin{array}{c}
v
\\
\Delta w-\mathrm{div}(a\nabla v)
\end{array}\right)
$$
with domain
$$
\mathcal{D}(\mathcal{A}_{0})=\{(w,v)\in\mathcal{H}_{0}:\; \Delta w-\mathrm{div}(a\nabla v)\in L^{2}(\Omega),\;v\in H_{0}^{1}(\Omega)\}.
$$
We suppose now that there exist $\Omega_{j}\subset\Omega$ with piecewise smooth boundary and $x_{0}^{j}\in\R^{n}$, $j=1,2,\ldots,J$ s.t. $\Omega_{i}\cap\Omega_{j}=\emptyset$ for any $1\leq i<j\leq J$ and for some $\delta>0$,
$$
\Omega\cap\mathcal{N}_{\delta}\left[\left(\bigcup_{j=1}^{J}\Gamma_{j}\right)\bigcup\left(\Omega\setminus\bigcup_{j=1}^{J}\Omega_{j}\right)\right]\subset\omega_{0},
$$
where for $\ds S\subset\R^{n}$, $\mathcal{N}_{\delta}(S)=\bigcup_{x\in S}\left\{y\in\R^{n};|x-y|<\delta\right\}$ and $\Gamma_{j}=\left\{x\in\partial\Omega_{j};\, (x-x_{0}^{j}).\nu^{j}(x)>0\right\}$, $\nu^{j}$ being the unit normal vector pointing into the exterior of $\Omega_{j}$. Under the above assumptions Tebou in \cite[Theorem 1.1]{tibou1} shows a non-uniform stabilization result in such a way the energy of the system \eqref{AFF16} decreases to zero as $t^{-1}$ as $t$ goes to the infinity for regular initial data. Consequently, according to Corollary \ref{cor : polynomiallement stable} we have the following 
\begin{prop}
Under the above assumptions and for $\eta>0$ the operator $\mathcal{A}$ generates a  C$_0$ semigroup of contractions satisfying
$$
\|e^{t\mathcal{A}}X\|_{\mathcal{H}}\leq\frac{C}{(1+t)^{\frac{1}{2-\alpha}}}\|X\|_{\mathcal{D}(\mathcal{A})},\quad\forall\,X\in\mathcal{D}(\mathcal{A}),\;t\geq 0,
$$
for some constant $C>0$. This means that the energy of system \eqref{AFF11} is decreasing to zero as $t$ goes to $+\infty$ as $t^{-\frac{2}{2-\alpha}}$.
\end{prop}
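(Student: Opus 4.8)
The plan is to derive the stated polynomial decay by checking the hypotheses of Corollary~\ref{cor : polynomiallement stable} and then invoking it. That corollary needs two ingredients: (i) $i\R\subset\rho(\mathcal{A})$, and (ii) the auxiliary Kelvin--Voigt operator $\mathcal{A}_0(w,v)=(v,\Delta w-\mathrm{div}(a\nabla v))$ on $\mathcal{H}_0=H_0^1(\Omega)\times L^2(\Omega)$ satisfies the resolvent growth \eqref{NUSFF3} with some $M(|\omega|)=|\omega|^{\ell}$; the resulting rate is $(1+t)^{-1/(1-\alpha+\ell)}$, so the claim corresponds to $\ell=1$.

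First I would establish (i). By Proposition~\ref{AFF19}, $\mathcal{A}$ generates a contraction $C_0$-semigroup, hence it is closed and densely defined, so $\sigma(\mathcal{A})$ is well defined. Fix $\omega\in\R$. Lemma~\ref{AFF9} gives that $i\omega I-\mathcal{A}$ is injective, and Lemma~\ref{AFF21} (which uses $\eta>0$ and $a$ smooth enough) gives that it is surjective onto $\mathcal{H}$. A closed bijective operator between Banach spaces has bounded inverse, so $(i\omega I-\mathcal{A})^{-1}\in\mathcal{L}(\mathcal{H})$ for every real $\omega$; that is, $i\R\subset\rho(\mathcal{A})$, which is also exactly the standing hypothesis of Theorem~\ref{th: estimation resolvante}.

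Next I would establish (ii). Under the geometric assumptions on $a$ (the subdomains $\Omega_j$, the sets $\Gamma_j$, and the $\delta$-neighbourhood condition placing the ``bad'' set inside $\omega_0=\mathrm{supp}(a)$), Tebou's theorem \cite[Theorem 1.1]{tibou1} yields a non-uniform stabilization of \eqref{AFF16} for data in $\mathcal{D}(\mathcal{A}_0)$ at the polynomial rate $t^{-1}$; by the Borichev--Tomilov characterization \cite[Theorem 2.4]{borichevtomilov} this is equivalent to the resolvent bound $\limsup_{\omega\in\R,\,|\omega|\to\infty}\bigl\||\omega|^{-1}(i\omega I-\mathcal{A}_0)^{-1}\bigr\|_{\mathcal{L}(\mathcal{H}_0)}<+\infty$, i.e.\ \eqref{NUSFF3} holds with $M(|\omega|)=|\omega|$. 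With (i) and (ii) in hand, Theorem~\ref{th: estimation resolvante} gives $\|(i\omega I-\mathcal{A})^{-1}\|_{\mathcal{L}(\mathcal{H})}\le C|\omega|^{2-\alpha}$ for $\omega\ge1$, and Corollary~\ref{cor : polynomiallement stable} (with $\ell=1$) yields $\|e^{t\mathcal{A}}X\|_{\mathcal{H}}\le C(1+t)^{-1/(2-\alpha)}\|X\|_{\mathcal{D}(\mathcal{A})}$. Since $2E(t)=\|e^{t\mathcal{A}}X\|_{\mathcal{H}}^{2}$ for $X=(u^0,u^1,0)$, the energy estimate $E(t)\le C(1+t)^{-2/(2-\alpha)}\|(u^0,u^1,0)\|_{\mathcal{D}(\mathcal{A})}^2$ follows.

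The abstract chain is short; the two places where care is genuinely needed are already the content of earlier results. One is the surjectivity step of Lemma~\ref{AFF21}, resting on the Lax--Milgram/Fredholm-alternative argument and on unique continuation for $-\Delta u=\omega^2u$ away from $\mathrm{supp}(a)$, and using $\eta>0$ to make the coefficients $c_1,c_2$ of Lemma~\ref{AFF7} and the integrals $\int_{\R}p(\xi)^2/(|\xi|^2+\eta)\,\ud\xi$ finite. The other is checking that the regularity of $a$ demanded there is compatible with the geometric configuration under which Tebou's estimate holds; once this compatibility is recorded, nothing further is required beyond quoting Corollary~\ref{cor : polynomiallement stable}.
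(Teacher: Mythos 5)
Your argument is correct and is essentially the paper's own: the paper likewise obtains $i\R\subset\rho(\mathcal{A})$ from the injectivity and surjectivity Lemmas \ref{AFF9} and \ref{AFF21} together with the closed graph theorem (exactly as it does explicitly in the internal-damping case), reads Tebou's $t^{-1}$ stabilization of \eqref{AFF16} as condition \eqref{NUSFF3} with $M(|\omega|)=|\omega|$, and concludes via Theorem \ref{th: estimation resolvante} and Corollary \ref{cor : polynomiallement stable} with $\ell=1$. The one point worth recording is that you, like the paper, must read Tebou's rate $t^{-1}$ as a decay rate of the semigroup norm $\|e^{t\mathcal{A}_{0}}X\|_{\mathcal{H}_{0}}/\|X\|_{\mathcal{D}(\mathcal{A}_{0})}$ rather than of the quadratic energy, since the Borichev--Tomilov correspondence applied to an energy decay of order $t^{-1}$ would instead give $M(|\omega|)=|\omega|^{2}$ and the weaker final rate $(1+t)^{-1/(3-\alpha)}$.
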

If in addition to the above geometric condition above we have that we have the following regularity of the damping coefficient $a\in W^{1,\infty}(\Omega)$, $|\nabla a(x)|^{2}M_{0}a(x)$ a.e. in $\Omega$, $a(x)\geq a_{0}$ a.e. in $\Omega$, Tebou in \cite[Theorem 1.2]{tibou1} (see also \cite[Theorem 1.2]{tibou2}) shows that the semigroup generated by the operator $\mathcal{A}_{0}$ is uniformly stable that is the energy of the system \eqref{AFF16} is exponentially stable. Then by combining this with Corollary \ref{cor : polynomiallement stable} we get the following 
\begin{prop}
Under the above additional assumptions and for $\eta>0$ the operator $\mathcal{A}$ generates a  C$_0$ semigroup of contractions satisfying
$$
\|e^{t\mathcal{A}}X\|_{\mathcal{H}}\leq\frac{C}{(1+t)^{\frac{1}{1-\alpha}}}\|X\|_{\mathcal{D}(\mathcal{A})},\quad\forall\,X\in\mathcal{D}(\mathcal{A}),\;t\geq 0,
$$
for some constant $C>0$. This means that the energy of system \eqref{AFF11} is decreasing to zero as $t$ goes to $+\infty$ as $t^{-\frac{2}{1-\alpha}}$.
\end{prop}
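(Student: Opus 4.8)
The plan is to follow verbatim the scheme already used for the two preceding propositions of this subsection, the single new input being the strengthened decay of the auxiliary system. The generation of a $C_0$-semigroup of contractions is not an issue: it is contained in Proposition \ref{AFF19}, itself a specialization of Theorem \ref{WFF1}. To locate the imaginary axis in the resolvent set, I would invoke Lemma \ref{AFF9} (injectivity of $i\omega I-\mathcal{A}$ for every $\omega\in\R$) together with Lemma \ref{AFF21} (surjectivity, valid here since $a$ is assumed smooth enough); combining these with the closed graph theorem gives $i\R\subset\rho(\mathcal{A})$, which is precisely the standing hypothesis of Theorem \ref{th: estimation resolvante} and of Corollary \ref{cor : polynomiallement stable}.

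The next step is to recognize that the auxiliary operator $\mathcal{A}_0$ associated with \eqref{AFF16} is exactly the operator of the abstract Section \ref{NUSFF} for the choices $U=(L^2(\Omega))^n$, $B^{*}v=\sqrt{a}\,\nabla v$, hence $BB^{*}=-\mathrm{div}(a\nabla\,\cdot\,)$. One has to check that $B\in\mathcal{L}(U,H_{-\frac12})$, which is immediate because $\sqrt a\in L^\infty(\Omega)$, and that with these identifications the energy dissipation \eqref{NUSFF30} reads $\frac{\ud E_0}{\ud t}(t)=-\|\sqrt a\,\nabla\partial_t w(t)\|^2_{L^2(\Omega)}$, matching the damping in \eqref{AFF16}. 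Then, under the additional regularity assumptions on $a$ ($a\in W^{1,\infty}(\Omega)$, $|\nabla a|^2\le M_0\,a$ a.e., $a\ge a_0>0$ a.e.), Tebou's theorem \cite[Theorem 1.2]{tibou1} (see also \cite[Theorem 1.2]{tibou2}) asserts that $e^{t\mathcal{A}_0}$ is exponentially stable; by the Huang--Pr\"uss criterion this is equivalent to the resolvent bound \eqref{NUSFF3} with $M(|\omega|)\equiv 1$, i.e.\ with $\ell=0$ (in particular $i\R\subset\rho(\mathcal{A}_0)$ is automatic).

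Finally I would simply quote Corollary \ref{cor : polynomiallement stable} with $\ell=0$: it yields
$$
\|e^{t\mathcal{A}}X\|_{\mathcal H}\le\frac{C}{(1+t)^{\frac{1}{1-\alpha}}}\,\|X\|_{\mathcal{D}(\mathcal{A})},\qquad\forall\,X\in\mathcal{D}(\mathcal{A}),\ t\ge 0,
$$
since $\frac{1}{1-\alpha+\ell}=\frac{1}{1-\alpha}$ when $\ell=0$, together with the companion estimate $E(t)\le C(1+t)^{-\frac{2}{1-\alpha}}\|(u^0,u^1,0)\|^2_{\mathcal D(\mathcal A)}$, which is exactly the asserted decay of the energy of \eqref{AFF11}. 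The main obstacle — really the only non-mechanical point — is the bookkeeping in the second step: one must make sure that all the abstract hypotheses of Section \ref{NUSFF} ($A$ self-adjoint strictly positive, $B\in\mathcal L(U,H_{-1/2})$, and the auxiliary $\mathcal{A}_0$ of this subsection literally coinciding with the one entering Theorem \ref{th: estimation resolvante}) are satisfied for the Kelvin--Voigt-type damping $BB^{*}=-\mathrm{div}(a\nabla\,\cdot\,)$, and that Tebou's exponential stability result is stated under hypotheses compatible with those imposed here; once this identification is made, no further estimate is needed and the decay rate follows by direct citation.
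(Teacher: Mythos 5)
Your proposal is correct and follows essentially the same route as the paper: the paper likewise obtains this proposition by quoting Tebou's exponential stability result for the auxiliary Kelvin--Voigt system \eqref{AFF16}, translating it via Huang--Pr\"uss into the resolvent bound \eqref{NUSFF3} with $M(|\omega|)\equiv 1$ (i.e.\ $\ell=0$), and then applying Corollary \ref{cor : polynomiallement stable}, with $i\R\subset\rho(\mathcal{A})$ supplied by Lemmas \ref{AFF9} and \ref{AFF21} and the closed graph theorem. Your write-up is in fact more explicit than the paper's about the identification $B^{*}v=\sqrt{a}\,\nabla v$ and the verification of the abstract hypotheses, but no new idea is involved.
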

However without any geometric conditions namely $a$ equal to a constant $d$ in $\omega_{0}$ and equal to zero elsewhere, using Carleman estimate Ammari, Hassine and Robbinao \cite[Theorem 1.1]{AHR2} show that the energy of the system \eqref{AFF11} decreases to zero as $\ln^{-2}(t)$ as $t$  goes to $+\infty$ for regular initial data. In particular, it is proven that the resolvent estimate of the operator $\mathcal{A}_{0}$ satisfies for some constant $C>0$ large enough,
$$
\|(i\omega I-\mathcal{A}_{0})^{-1}\|_{\mathcal{L}(\mathcal{H}_{0})}\leq C\e^{C|\omega|},\quad\forall\,|\omega|\gg1.
$$
Hence, by combing this resolvent estimate with Corollary \ref{cor : logarithme stable} we obtain the following
\begin{prop}
Under the above assumption and for $\eta>0$ the operator $\mathcal{A}$ generates a  C$_0$ semigroup of contractions satisfying
$$
\|e^{t\mathcal{A}}X\|_{\mathcal{H}}\leq\frac{C}{\ln(1+t)}\|X\|_{\mathcal{D}(\mathcal{A})},\quad\forall\,X\in\mathcal{D}(\mathcal{A}),\;t\geq 0,
$$
for some constant $C>0$. This means that the energy of system \eqref{AFF11} is decreasing to zero as $t$ goes to $+\infty$ as $\ln^{-2}(t)$.
\end{prop}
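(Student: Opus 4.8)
The plan is to verify the hypotheses of Corollary~\ref{cor : logarithme stable} for the operator $\mathcal{A}$ associated with \eqref{AFF11}. Three ingredients are needed: that $\mathcal{A}$ generates a contraction $C_0$ semigroup on $\mathcal{H}=H_0^1(\Omega)\times L^2(\Omega)\times L^2(\R;(L^2(\Omega))^n)$; that $i\R\subset\rho(\mathcal{A})$; and that the auxiliary operator $\mathcal{A}_0$ of \eqref{AFF16} satisfies the resolvent growth condition \eqref{NUSFF3} with $M(|\omega|)=\e^{K_0|\omega|}$ for some $K_0>0$. The first is nothing new: it is exactly Proposition~\ref{AFF19}, itself a consequence of the abstract Theorem~\ref{WFF1}.

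For $i\R\subset\rho(\mathcal{A})$ I would proceed exactly as in the internal-damping case: Lemma~\ref{AFF9} gives that $i\omega I-\mathcal{A}$ is injective for every $\omega\in\R$, and Lemma~\ref{AFF21}---whose proof reduces the surjectivity of $i\omega I-\mathcal{A}$ to the solvability of the elliptic problem \eqref{AFF4}, obtained via Lemma~\ref{AFF7} (Lax--Milgram on the decoupled real/imaginary system) together with Fredholm's alternative---gives surjectivity. Hence $i\omega I-\mathcal{A}\colon\mathcal{D}(\mathcal{A})\to\mathcal{H}$ is a bijection for all real $\omega$, and since $\mathcal{A}$ is closed the closed graph theorem yields $(i\omega I-\mathcal{A})^{-1}\in\mathcal{L}(\mathcal{H})$, i.e. $i\R\subset\rho(\mathcal{A})$. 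Here one must be attentive to the fact that $a$ is only piecewise constant (equal to a constant $d$ on $\omega_0$, zero elsewhere), so the coefficient of $\mathrm{div}(a\nabla\,\cdot\,)$ is discontinuous: all the elliptic identities must be read in the transmission sense. This causes no trouble for the existence parts ($a\in L^\infty$ keeps the bilinear forms in Lemma~\ref{AFF7} continuous and coercive on $H_0^1(\Omega)$, and $\mathrm{div}(a\nabla f)\in H^{-1}(\Omega)$ for $f\in H_0^1(\Omega)$), but the unique-continuation step used to prove injectivity must be applied to the associated transmission problem; this is precisely the framework treated in \cite{AHR2}.

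For the resolvent growth of $\mathcal{A}_0$ I would invoke the Carleman-estimate result of Ammari, Hassine and Robbiano \cite[Theorem~1.1]{AHR2} for the Kelvin--Voigt damped wave equation \eqref{AFF16} with this discontinuous coefficient, which yields $i\R\subset\rho(\mathcal{A}_0)$ together with the bound $\|(i\omega I-\mathcal{A}_0)^{-1}\|_{\mathcal{L}(\mathcal{H}_0)}\leq C\e^{C|\omega|}$ for $|\omega|\gg1$; this is exactly \eqref{NUSFF3} with the increasing function $M(|\omega|)=\e^{K_0|\omega|}$. Plugging $i\R\subset\rho(\mathcal{A})$ and this estimate into Theorem~\ref{th: estimation resolvante} (applicable since $\eta>0$) gives $\|(i\omega I-\mathcal{A})^{-1}\|_{\mathcal{L}(\mathcal{H})}\leq C|\omega|^{1-\alpha}\e^{K_0|\omega|}\leq C'\e^{C''|\omega|}$ for $|\omega|\gg1$, the polynomial prefactor being absorbed into the exponential. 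Corollary~\ref{cor : logarithme stable} (built on Batty--Duyckaerts \cite{Ba-Du:08} and Burq \cite{Burq:98}) then converts this into the stated decay: its energy estimate $E(t)\leq C\ln^{-2}(1+t)\,\|X\|_{\mathcal{D}(\mathcal{A})}^2$ is, after taking square roots and using that $E(t)$ is comparable to $\|\e^{t\mathcal{A}}X\|_{\mathcal{H}}^2$, exactly the bound $\|\e^{t\mathcal{A}}X\|_{\mathcal{H}}\leq C\ln^{-1}(1+t)\,\|X\|_{\mathcal{D}(\mathcal{A})}$ claimed in the proposition. The one genuinely non-routine ingredient is the high-frequency resolvent estimate for $\mathcal{A}_0$ with discontinuous Kelvin--Voigt damping, which is imported wholesale from \cite{AHR2} via a Carleman estimate across the interface; everything else is an assembly of results already established in this paper.
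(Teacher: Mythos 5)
Your proposal is correct and follows essentially the same route as the paper: the paper's (unwritten) proof is precisely to combine Proposition~\ref{AFF19}, Lemmas~\ref{AFF9} and~\ref{AFF21} (giving $i\R\subset\rho(\mathcal{A})$ via the closed graph theorem), the resolvent bound $\|(i\omega I-\mathcal{A}_{0})^{-1}\|_{\mathcal{L}(\mathcal{H}_{0})}\leq C\e^{C|\omega|}$ imported from \cite{AHR2}, and Theorem~\ref{th: estimation resolvante} with Corollary~\ref{cor : logarithme stable}. Your additional care about the piecewise-constant damping coefficient $a$ (noting that $a\in L^{\infty}$ suffices for the Lax--Milgram and $F\in H^{-1}(\Omega)$ steps, where Lemma~\ref{AFF21} nominally asks for ``$a$ smooth enough'') is a worthwhile clarification of a point the paper passes over silently, but it does not change the argument.
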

\subsection{Pointwise fractional-damped string equation}
We consider the equation of the vibration of a string of length equal to $1$ with a pointwise fractional damping modeled by the following equation
\begin{equation*}
\left\{\begin{array}{ll}
\partial_{t}^{2}u(x,t)-u''(x,t)+\partial_{t}^{\alpha,\eta}u(\zeta,t)\delta_{\zeta}=0&(x,t)\in(0,1)\times\R_{+}
\\
u(0,t)=u(1,t)=0&t\in\R_{+}
\\
u(x,0)=u^{0}(x),\qquad \partial_{t}u(x,0)=u^{1}(x)&x\in(0,1),
\end{array}\right.
\end{equation*}
where the prime denotes the space derivative and $\delta_{\zeta}$ is the Dirac mass concentrated in the point $\zeta$ of $(0,1)$ (See \cite{hassine,tucsnak} for the classical derivative). Equivalently we have
\begin{equation}\label{AFF17}
\left\{\begin{array}{ll}
\ds\partial_{t}^{2}u(x,t)-u''(x,t)+\gamma\int_{\R}p(\xi)\varphi(t,\xi)\,\ud\xi\,\delta_{\zeta}=0&(x,t)\in(0,1)\times\R_{+}
\\
\partial_{t}\varphi(t,\xi)+(|\xi|^{2}+\eta)\varphi(t,\xi)=p(\xi)\partial_{t}u(\zeta,t)&(x,t,\xi)\in(0,1)\times\R_{+}\times\R
\\
u(0,t)=u(1,t)=0&t\in\R_{+}
\\
u(x,0)=u^{0}(x),\quad \partial_{t}u(x,0)=u^{1}(x),\quad \partial_{t}\varphi(0,\xi)=\varphi^{0}(\xi)&(x,\xi)\in(0,1)\times\R,
\end{array}\right.
\end{equation}
where we recall here that $U=\mathbb{C}$, $H=L^{2}(0,1)$, $H_{\frac{1}{2}}=H_{0}^{1}(0,1)$, $H_{-\frac{1}{2}}=H^{-1}(0,1)$, $Bz=z\delta_{\zeta}$ for all $z\in\mathbb{C}$ and $B^{*}u=u(\zeta)$ for all $u\in H_{0}^{1}(0,1)$.

We consider now the operator $\mathcal{A}:\mathcal{D}(\mathcal{A})\subset\mathcal{H}\longrightarrow\mathcal{H}$ defined by
$$
\mathcal{A}\left(\begin{array}{c}
u
\\
v
\\
\varphi
\end{array}\right)=\left(\begin{array}{c}
v
\\
\ds u''+\gamma\int_{\R}p(\xi)\varphi(\xi)\,\ud\xi\,\delta_{\zeta}
\\
-(|\xi|^2+\eta)\varphi(\xi)+p(\xi)v(\zeta)
\end{array}\right),
$$
in the Hilbert space $\mathcal{H}=H_{0}^{1}(0,1)\times L^{2}(0,1)\times L^{2}(\mathbb{C};\R)$ with domain
\begin{equation*}
\begin{split}
\ds\mathcal{D}(\mathcal{A})=\Big\{(u,v,\varphi)\in \mathcal{H}:v\in H_{0}^{1}(0,1),\; u''+\gamma\int_{\R}p(\xi)\varphi(\zeta,\xi)\,\ud\xi\,\delta_{\zeta}\in L^{2}(0,1),
\\
|\xi|\varphi\in L^{2}(\R;\mathbb{C}),\; -(|\xi|^2+\eta)\varphi(\xi)+p(\xi)v(\zeta)\in L^{2}(\R;\mathbb{C})\Big\}.
\end{split}
\end{equation*}
The energy of the solution of system \eqref{AFF17} is given by
$$
E(t)=\frac{1}{2}\left(\|\partial_{t}u(t)\|_{L^{2}(0,1)}^{2}+\|u'(t)\|_{L^{2}(0,1)}^{2}+\gamma\int_{\R}|\varphi(t,\xi)|^{2}\,\ud\xi\right).
$$
\begin{prop}
The semigroup generated by the operator $\mathcal{A}$ is strongly stable, i.e
$$
\lim_{t\to+\infty}\|e^{\mathcal{A}t}(u^{0},v^{0},\varphi^{0})\|_{\mathcal{H}}=0,\qquad\forall(u^{0},u^{1},\varphi^{0})\in \mathcal{H},
$$
if and only if $\zeta\notin\mathbb{Q}$.
\end{prop}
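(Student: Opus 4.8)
The plan is to read off the ``if'' direction from the abstract criterion of Theorem~\ref{SSFF20} and to settle the ``only if'' direction by producing an explicit purely imaginary eigenvalue of $\mathcal{A}$. In the present setting $H=L^2(0,1)$, $A=-d^2/dx^2$ with $\mathcal{D}(A)=H^2(0,1)\cap H_0^1(0,1)$, $H_{\frac12}=H_0^1(0,1)$, and $B^{*}u=u(\zeta)$; the embedding $H_0^1(0,1)\hookrightarrow L^2(0,1)$ is compact, so Theorem~\ref{SSFF20} reduces the problem to showing that the over-determined system \eqref{SSFF21}, which here reads $\partial_t^2u-u''=0$ in $(0,1)$ with $u(0,t)=u(1,t)=0$ and $\partial_t u(\zeta,t)=0$ for all $t\ge0$, has only the trivial solution precisely when $\zeta$ is irrational.

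For the ``if'' direction, assume $\zeta\notin\mathbb{Q}$. I would diagonalize $A$ in the orthonormal basis $e_n(x)=\sqrt2\,\sin(n\pi x)$ with (simple) eigenvalues $n^2\pi^2$, so that a classical solution of \eqref{SSFF21} is $u(t)=\sum_{n\ge1}\bigl(\alpha_n\cos(n\pi t)+\beta_n\sin(n\pi t)\bigr)e_n$; the regularity built into the notion of classical solution forces $\sum_n n^4(\alpha_n^2+\beta_n^2)<+\infty$, hence $\partial_t u(t)\in H_0^1(0,1)\hookrightarrow C[0,1]$ and the scalar function $g(t):=B^{*}\partial_t u(t)=\partial_t u(\zeta,t)=\sqrt2\,\pi\sum_{n\ge1}n\sin(n\pi\zeta)\bigl(\beta_n\cos(n\pi t)-\alpha_n\sin(n\pi t)\bigr)$ is an absolutely and uniformly convergent trigonometric series with pairwise distinct frequencies $n\pi$. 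Since $g\equiv0$ on $[0,\infty)$, passing to the mean values $\lim_{T\to+\infty}\frac1T\int_0^T g(t)e^{-im\pi t}\,\ud t$ picks out each coefficient and yields $n\sin(n\pi\zeta)\,\alpha_n=n\sin(n\pi\zeta)\,\beta_n=0$ for all $n$. As $\zeta$ is irrational, $\sin(n\pi\zeta)\ne0$ for every $n\ge1$, so $\alpha_n=\beta_n=0$ and $u\equiv0$; Theorem~\ref{SSFF20} then gives the strong stability of $e^{t\mathcal{A}}$.

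For the ``only if'' direction, assume $\zeta=p/q\in(0,1)$ with $\gcd(p,q)=1$, so that $\sin(q\pi\zeta)=\sin(p\pi)=0$, i.e.\ $B^{*}e_q=e_q(\zeta)=0$. I would then set $\omega=q\pi$ and $X=\bigl(e_q,\,i\omega e_q,\,0\bigr)$ and check directly that $X\in\mathcal{D}(\mathcal{A})$ and $\mathcal{A}X=i\omega X$: the first component is immediate, the second uses $e_q''=-\omega^2 e_q$ together with the vanishing of the Dirac term (the third component being $0$), and the third component reduces to $p(\xi)\,(i\omega e_q)(\zeta)=i\omega\,p(\xi)e_q(\zeta)=0$, which also shows the defining integrability conditions of $\mathcal{D}(\mathcal{A})$ hold. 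Thus $i\omega\in i\R\setminus\{0\}$ is an eigenvalue of $\mathcal{A}$, $e^{t\mathcal{A}}X=e^{i\omega t}X$ has constant norm, and $e^{t\mathcal{A}}$ is not strongly stable (the energy of this particular solution being conserved, in agreement with Theorem~\ref{LUSFF9}(2)(i)).

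The only step requiring genuine care is the separation of the individual coefficients $\alpha_n,\beta_n$ from the single scalar identity $g\equiv0$ in the irrational case: one must first harvest enough summability from the $\mathcal{D}(\mathcal{A})$-type regularity to legitimate the interchange of sum and mean value, after which the distinctness of the eigenfrequencies $n\pi$ does the rest; everything else is routine verification. If one prefers to avoid the almost-periodic bookkeeping, an equivalent route is to observe that $w:=\partial_t u$ itself solves the Dirichlet wave equation on $(0,1)$ and is unobserved from $\zeta$, hence is a superposition of the modes $e_n$ with $\sin(n\pi\zeta)=0$; irrationality of $\zeta$ leaves no such mode, so $w\equiv0$ and then $u\equiv0$.
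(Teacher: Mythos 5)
Your proposal is correct and follows essentially the same route as the paper: the ``if'' direction reduces via Theorem~\ref{SSFF20} to showing the over-determined system \eqref{SSFF21} has only the trivial solution, which you settle by the same sine-series expansion and the non-vanishing of $\sin(k\pi\zeta)$ for irrational $\zeta$ (your mean-value extraction of the coefficients just makes explicit the ``uniqueness of the Fourier series'' step the paper asserts); the ``only if'' direction produces the same purely imaginary eigenvalue arising from $B^{*}u_q=\sin(q\pi\zeta)=0$, which the paper obtains by citing Theorem~\ref{LUSFF9}(2)(i) and you verify directly. No substantive difference in method.
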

\begin{proof}
The prove is done in two stages:
\begin{itemize}
\item We consider the following problem
\begin{equation}\label{AFF18}
\left\{\begin{array}{ll}
\partial_{t}^{2}u-u''(x,t)=0&(x,t)\in(0,1)\times\R_{+}
\\
\partial_{t}u(\zeta,t)=0&
\\
u(0,t)=u(1,t)=0,&
\end{array}\right.
\end{equation}
Then the solution of \eqref{AFF18} is given by
\begin{equation}\label{AFF20}
\begin{split}
u(x,t)&=2\sum_{k=1}^{+\infty}\langle u^{0},\sin(k\pi\,.\,)\rangle_{L^{2}(0,1)}\cos(k\pi t)\sin(k\pi x)
\\
&+2\sum_{k=1}^{+\infty}\langle u^{1},\sin(k\pi\,.\,)\rangle_{L^{2}(0,1)}\frac{\sin(k\pi t)\sin(k\pi x)}{k\pi},\quad \forall\, x\in(0,1),\;\forall\,t\in\R_{+},
\end{split}
\end{equation}
where $u^{0}$ and $u^{1}$ are the initial data. In particular, we have 
\begin{equation*}
\begin{split}
\partial_{t}u(\zeta,t)&=-2\pi\sum_{k=1}^{+\infty}k\langle u^{0},\sin(k\pi\,.\,)\rangle_{L^{2}(0,1)}\sin(k\pi t)\sin(k\pi\zeta)
\\
&+2\sum_{k=1}^{+\infty}\langle u^{1},\sin(k\pi\,.\,)\rangle_{L^{2}(0,1)}\cos(k\pi t)\sin(k\pi\zeta)=0,\qquad\forall\,t\in\R_{+}.
\end{split}
\end{equation*}
The uniqueness of the Fourier series implies that $k\langle u^{0},\sin(k\pi\,.\,)\rangle_{L^{2}(0,1)}\sin(k\pi\zeta)=0$ and $\langle u^{1},\sin(k\pi\,.\,)\rangle_{L^{2}(0,1)}\sin(k\pi\zeta)=0$ for all $k\in\mathbb{N}^{*}$. Since $\zeta\notin\mathbb{Q}$ then $\sin(k\pi\zeta)\neq0$ for all $k\in\mathbb{N}^{*}$. Therefore, $\langle u^{0},\sin(k\pi\,.\,)\rangle_{L^{2}(0,1)}=0$ and $\langle u^{1},\sin(k\pi\,.\,)\rangle_{L^{2}(0,1)}=0$ for all $k\in\mathbb{N}^{*}$. Following to \eqref{AFF20} we obtain $u=0$. Thus, the first implication follows from Theorem \ref{SSFF20}.
\item We recall that the sequence of eigenfunctions of the Dirichlet Lapacian operator in $(0,1)$ are given by
$$
u_{k}(x)=\sin(k\pi x)\qquad\forall\,x\in(0,1)
$$
formed an orthonormal base of $L^{2}(0,1)$ with the corresponding eigenvalues $-\mu_{k}=-k^{2}$ for all $k\in\mathbb{Z}$. Since $\zeta\in\mathbb{Q}$ then $B^{*}u_{k}=\sin(k\pi\zeta)=0$ for some $k\in\N$. Following to the second item of Theorem \ref{LUSFF9} $ik$ is an eigenvalue of the operator $\mathcal{A}$. Therefore $\sigma(\mathcal{A})\cap i\R\neq \emptyset$. This prove the second implication.
\end{itemize}
This completes the proof.
\end{proof}

We consider now the following auxiliary problem
\begin{equation}\label{AFF23}
\left\{\begin{array}{ll}
\partial_{t}^{2}w(x,t)-w''(x,t)+\partial_{t}w(\zeta,t)\delta_{\zeta}=0&(x,t)\in(0,1)\times\R_{+}
\\
w(0,t)=w(1,t)=0&t\in\R_{+}
\\
w(x,0)=w^{0}(x),\qquad \partial_{t}w(x,0)=w^{1}(x)&x\in(0,1).
\end{array}\right.
\end{equation}
System \eqref{AFF23} is well posed in the Hilbert space $\mathcal{H}_{0}=H_{0}^{1}(0,1)\times L^{2}(0,1)$ and its solution is a semigroup generated by the operator $\mathcal{A}_{0}:\mathcal{D}(\mathcal{A})\subset\mathcal{H}_{0}\longrightarrow\mathcal{H}_{0}$ defined by
$$
\mathcal{A}_{0}\left(\begin{array}{c}
w
\\
v
\end{array}\right)=\left(\begin{array}{c}
v
\\
w''-v(\zeta)\delta_{\zeta}
\end{array}\right)
$$
with domain
$$
\mathcal{D}(\mathcal{A}_{0})=\{(w,v)\in\left[H_{0}^{1}(0,1)\right]^{2}:\;w\in H^{2}(0,\zeta)\cap H^{2}(\zeta,1),\;w'(\zeta^{+})-w'(\zeta^{-})=v(\zeta)\}.
$$
Considering the following subset
\begin{align*}
\mathcal{M}=\Big\{\zeta\in(0,1):\exists\,K_{1},\,K_{2}> 0,\,\left(\sin^{2}(\mu)+\sin^{2}(\zeta\mu).\sin^{2}((1-\zeta)\mu)\right)\e^{K_{1}\mu}\geq K_{2},\,\forall \mu\gg 1\Big\}.
\end{align*}
According to \cite[Theorem 1.1]{hassine}, if $\zeta\in\mathcal{M}$ the energy of the system \eqref{AFF23} is deceasing as $\ln^{-2}(t)$ as $t$ goes to $+\infty$ in particular it is proven that the resolvent estimate of the operator $\mathcal{A}_{0}$ satisfies for some constant $C>0$ large enough,
$$
\|(i\omega I-\mathcal{A}_{0})^{-1}\|_{\mathcal{L}(\mathcal{H}_{0})}\leq C\e^{C|\omega|},\quad\forall\,|\omega|\gg1.
$$
Therefore, by combining this estimation with Corollary \ref{cor : logarithme stable} we obtain the following
\begin{prop}
Under the above assumption on $\zeta$ and for $\eta>0$ the operator $\mathcal{A}$ generates a  C$_0$ semigroup of contractions satisfying
$$
\|e^{t\mathcal{A}}X\|_{\mathcal{H}}\leq\frac{C}{\ln(1+t)}\|X\|_{\mathcal{D}(\mathcal{A})},\quad\forall\,X\in\mathcal{D}(\mathcal{A}),\;t\geq 0,
$$
for some constant $C>0$. This means that the energy of system \eqref{AFF11} is decreasing to zero as $t$ goes to $+\infty$ as $\ln^{-2}(t)$.
\end{prop}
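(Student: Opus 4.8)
The plan is to follow the three-step scheme already used for the internal and Kelvin--Voigt applications: deduce well-posedness from the abstract Theorem~\ref{WFF1}, prove that $i\R\subset\rho(\mathcal{A})$, and then combine the resolvent bound for the auxiliary operator $\mathcal{A}_0$ of \eqref{AFF23} with Corollary~\ref{cor : logarithme stable}. For the first point, with $U=\mathbb{C}$, $H=L^2(0,1)$, $H_{1/2}=H_0^1(0,1)$, $Bz=z\delta_\zeta$ and $B^*u=u(\zeta)$, the operator $B$ is bounded from $\mathbb{C}$ into $H^{-1}(0,1)=H_{-1/2}$, so Theorem~\ref{WFF1} applies and $\mathcal{A}$ generates a contraction $C_0$-semigroup on $\mathcal{H}$. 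I would also record that $\zeta\in\mathcal{M}$ forces $\zeta\notin\mathbb{Q}$: if $\zeta=p/q$ then $\sin^2(\mu)+\sin^2(\zeta\mu)\sin^2((1-\zeta)\mu)$ vanishes at $\mu=q\pi$ and hence cannot be bounded below by $K_2\e^{-K_1\mu}$; in particular the preceding proposition already gives strong stability.

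The core step is to show $i\R\subset\rho(\mathcal{A})$, which I would do by proving that $i\omega I-\mathcal{A}$ is bijective for every $\omega\in\R$ and then invoking the closed graph theorem, in the spirit of Lemmas~\ref{AFF9}--\ref{AFF21}. For injectivity: if $\mathcal{A}X=i\omega X$, the dissipation identity $\re\langle\mathcal{A}X,X\rangle_{\mathcal{H}}=-\gamma\int_\R(|\xi|^2+\eta)|\varphi(\xi)|^2\,\ud\xi=0$ forces $\varphi\equiv0$, so $\omega^2u+u''=0$ on $(0,1)$ with $u(0)=u(1)=0$ and $u(\zeta)=0$; for $\omega=0$ this gives $u\equiv0$, while for $\omega\neq0$ the Dirichlet spectrum imposes $\omega=k\pi$ and $\sin(k\pi\zeta)=0$, impossible since $\zeta\notin\mathbb{Q}$. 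For surjectivity, given $(f,g,h)\in\mathcal{H}$ I would eliminate $v$ and $\varphi$ exactly as in \eqref{AFF8}, reducing the system to
\[
\omega^2u+u''-(\omega^2c_1+i\omega c_2)\,u(\zeta)\,\delta_\zeta=F\ \text{ in }(0,1),\qquad u(0)=u(1)=0,
\]
with $F\in H^{-1}(0,1)$ and $c_1,c_2$ the constants of Lemma~\ref{AFF7}, which are finite for $\eta>0$ by Lemma~\ref{NUSFF1}; a Lax--Milgram argument shows that $u\mapsto-\Delta u-(\omega^2c_1+i\omega c_2)u(\zeta)\delta_\zeta$ is an isomorphism $H_0^1(0,1)\to H^{-1}(0,1)$ with compact inverse, and Fredholm's alternative together with the injectivity just proved yields a unique $u\in H_0^1(0,1)$, hence a unique $X\in\mathcal{D}(\mathcal{A})$.

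I expect this surjectivity argument to be the main obstacle, though it is mostly bookkeeping: one must check that the Dirac term is correctly interpreted in the weak formulation (it produces the transmission condition across $\zeta$ built into $\mathcal{D}(\mathcal{A})$) and that the $\xi$-integrals defining $c_1$ and $c_2$ converge for $\eta>0$. Once $i\R\subset\rho(\mathcal{A})$ is in hand, it remains to note that the auxiliary operator $\mathcal{A}_0$ attached to \eqref{AFF23} has dissipation generated by $BB^*$, namely the pointwise feedback $v\mapsto v(\zeta)\delta_\zeta$, and that by \cite[Theorem~1.1]{hassine} the hypothesis $\zeta\in\mathcal{M}$ gives $i\R\subset\rho(\mathcal{A}_0)$ together with $\|(i\omega I-\mathcal{A}_0)^{-1}\|_{\mathcal{L}(\mathcal{H}_0)}\leq C\e^{C|\omega|}$ for $|\omega|\gg1$; that is, condition \eqref{NUSFF3} holds with $M(|\omega|)=\e^{K_0|\omega|}$. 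Corollary~\ref{cor : logarithme stable} then applies verbatim and gives $\|e^{t\mathcal{A}}X\|_{\mathcal{H}}\leq C(\ln(1+t))^{-1}\|X\|_{\mathcal{D}(\mathcal{A})}$, equivalently an energy decay of order $\ln^{-2}(t)$, which is the claim.
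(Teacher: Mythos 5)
Your proposal is correct and, for the concluding step, identical to what the paper does: it simply quotes \cite[Theorem 1.1]{hassine} to get $\|(i\omega I-\mathcal{A}_{0})^{-1}\|_{\mathcal{L}(\mathcal{H}_{0})}\leq C\e^{C|\omega|}$, i.e.\ condition \eqref{NUSFF3} with $M(|\omega|)=\e^{K_{0}|\omega|}$, and then invokes Corollary \ref{cor : logarithme stable}. The genuine difference is that you also verify $i\R\subset\rho(\mathcal{A})$, which the paper does not do for this application (it states the analogous Lemma \ref{AFF10} for the internal damping and proves Lemmas \ref{AFF9} and \ref{AFF21} for the Kelvin--Voigt case, but gives nothing for the pointwise case); since $i\R\subset\rho(\mathcal{A})$ is an explicit hypothesis of Theorem \ref{th: estimation resolvante} and hence of Corollary \ref{cor : logarithme stable}, your injectivity argument (dissipation forces $\varphi\equiv0$, then $u''+\omega^{2}u=0$ with $u(0)=u(1)=u(\zeta)=0$ and $\sin(k\pi\zeta)\neq0$ for $\zeta\notin\mathbb{Q}$) and your surjectivity argument (elimination of $v,\varphi$, Lax--Milgram plus Fredholm as in Lemma \ref{AFF7}) fill a real gap rather than duplicate the paper. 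Your observation that $\mathcal{M}\cap\mathbb{Q}=\emptyset$ (the quantity $\sin^{2}(\mu)+\sin^{2}(\zeta\mu)\sin^{2}((1-\zeta)\mu)$ vanishes along $\mu=kq\pi$ when $\zeta=p/q$) is also needed to make the injectivity step legitimate and is left implicit in the paper. One cosmetic slip: in the prose you write the auxiliary operator as $-\Delta u-(\omega^{2}c_{1}+i\omega c_{2})u(\zeta)\delta_{\zeta}$, whereas to have a coercive real part (and to match your own displayed reduced equation, whose signs are correct) it should be $-\Delta u+(\omega^{2}c_{1}+i\omega c_{2})u(\zeta)\delta_{\zeta}$, the analogue of $A_{\omega}=-\Delta-(\omega^{2}c_{1}+i\omega c_{2})\mathrm{div}(a\nabla\cdot)$ in Lemma \ref{AFF7}; with that sign fixed, $\re L(u,u)=\|u'\|_{L^{2}}^{2}+\omega^{2}c_{1}|u(\zeta)|^{2}$ is coercive and the argument goes through.
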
 

\end{document}